\begin{document} 
\newcommand{\A}{{\mathbb A}}
\newcommand{\B}{{\mathbb B}}
\newcommand{\C}{{\mathbb C}}
\newcommand{\N}{{\mathbb N}}
\newcommand{\Q}{{\mathbb Q}}
\newcommand{\Z}{{\mathbb Z}}
\renewcommand{\P}{{\mathbb P}}
\newcommand{\R}{{\mathbb R}}
\newcommand{\rc}{\subset}
\newcommand{\rank}{\mathop{rank}}
\newcommand{\trace}{\mathop{tr}}
\newcommand{\dimc}{\mathop{dim}_{\C}}
\newcommand{\ddz}{\mathop{\frac{\partial}{\partial z}}}
\newcommand{\Lie}{\mathop{Lie}}
\newcommand{\Spec}{\mathop{Spec}}
\newcommand{\Aut}{\mathop{{\rm Aut}}}
\newcommand{\Auto}{\mathop{{\rm Aut}_{\mathcal O}}}
\newcommand{\alg}[1]{{\mathbf #1}}
\theoremstyle{plain}
\newtheorem{lemma}{Lemma}
\newtheorem{proposition}{Proposition}
\newtheorem{theorem}{Theorem}
\theoremstyle{definition}
\newtheorem{definition}{Definition}
\newtheorem{corollary}{Corollary}
\theoremstyle{remark}
\newtheorem{example}{Example}
\newtheorem*{remark}{Remark}
\title{%
Deformations of Riemann Surfaces
}
\author {J\"org Winkelmann}
\email{jwinkel@member.ams.org}
\address{%
Lehrstuhl Analysis II \\
Mathematisches Institut \\
NA 4/73\\
Ruhr-Universit\"at Bochum\\
44780 Bochum \\
Germany
}
%
\keywords{Riemann surface, Kobayashi pseudodistance, length spectrum}
\begin{abstract}
We prove that
every Riemann surface not isomorphic to $\P_1$
admits an infinitesimal deformation of
its complex structure. The proof is based on an investigation of
the length of geodesics for the Kobayashi/Poincar\'e metric.
\end{abstract}
\thanks{
{\em Acknowledgement.}
The author was 
supported by the 
SFB/TR 12 ``Symmetries and Universality in mesoscopic systems''.
}
\maketitle
\section{Summary}
The purpose of this article is to prove the following statement:

\medskip
{\em On every orientable surface except the $2$-sphere
$S^2$ there exists at least two non-isomorphic
complex structures.}

\medskip
For compact Riemann surfaces there is a 
classical theory of moduli spaces
which includes a precise description of deformations of these compact
Riemann surfaces. 
This theory easily implies the above statement for compact Riemann surfaces.
This theory extends to complex algebraic curves, i.e., to Riemann surfaces
which can be compactified by adding finitely many points.
However, this theory does not extend to arbitrary
non-compact Riemann surfaces whose topology can be quite complicated,
e.g., the fundamental group may be not finitely generated.
Thus to determine which Riemann surfaces admit deformations,
we need a different approach.

The key idea in our approach is the following: Most Riemann surfaces
are hyperbolic. Hence there is a canonical way to measure the length
of a (real) curve.

Given a hyperbolic Riemann surface $X$, we look for
a closed curve $\gamma:S^1\to X$ for which there exists a bound $c>0$
such that every curve homotopic to $\gamma$ has at least length $c$
where the length is calculated with respect to the Kobayashi pseudometric.
Then we deform the complex structure on some neighbourhood of the
image of $\gamma$ such that the length of $\gamma$ decreases below
$c$. This yields a complex structure non-isomorphic to the original
one.

In this way we obtain the following result.

\begin{theorem}
Let $(X,J)$ be a Riemann surface which is not biholomorphic to $\P_1$.
Then there exists a continuous family of complex structures $J_t$
parametrized by $t\in[0,1]$ such that $J_0=J$ and such that $(X,J_1)$
is not  biholomorphic to $(X,J)$.
\end{theorem}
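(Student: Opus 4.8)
The plan is to use uniformization to isolate the hyperbolic surfaces, to produce the deformation by conformally fattening a collar of a closed geodesic (or, in one exceptional case, by opening a cusp into a funnel), and to detect the change of biholomorphism type through the length spectrum of the Kobayashi/Poincar\'e metric. Passing to the universal cover, $(X,J)$ is $\P_1$ (excluded), has universal cover $\C$, or has universal cover the disc $\Delta$. If the universal cover is $\C$, then $(X,J)$ is $\C$, $\C^*$, or a torus, and the required family is classical or elementary: for a torus one moves the period lattice so that the $j$-invariant changes; for the surfaces of topological type a plane or a cylinder — where $(X,J)$ is one of $\C$, $\Delta$, $\C^*$, $\Delta^*$, $\{1<|z|<R\}$ — one fixes a diffeomorphism $h$ from $(X,J)$ onto a model of a different conformal type and uses the Beltrami family $\mu_t=t\,\mu_h$ with $\mu_h=h_{\bar z}/h_z$; since $|\mu_t|<1$ pointwise and locally bounded away from $1$ (because $h$ is a diffeomorphism), each $J_t$ is a genuine complex structure, $J_0=J$, and $(X,J_1)$ has the other conformal type, hence is not biholomorphic to $(X,J)$. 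From now on assume $X$ is hyperbolic and of topological type none of sphere, plane, cylinder, torus; equivalently, the uniformizing Fuchsian group is non-elementary.

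For the construction, a non-elementary Fuchsian group has hyperbolic elements, and — with the single exception of the surface biholomorphic to $\C\setminus\{0,1\}$, all of whose ends are cusps and none of whose closed geodesics is simple — our $X$ carries a simple closed geodesic $\gamma$. (For $\C\setminus\{0,1\}$ one instead deforms one of the three cusps into a funnel, i.e.\ an annular end of finite modulus, by a Beltrami family supported near that end; this lowers the number of cusps, a biholomorphic invariant.) Assuming $\gamma$ exists, let $N$ be an annular neighbourhood of $\gamma$ and $\ell_0=\ell_{(X,J)}(\gamma)>0$ the Kobayashi length of its free homotopy class. I would choose a continuous family $\mu_t$ of Beltrami coefficients supported in a fixed compact subset of $N$, with $\mu_0=0$, arranged so that the conformal modulus $M_t$ of $(N,J_t)$ increases continuously with $t$ to a value $M_1$ that may be taken as large as desired — this being the standard fact that a quasiconformal stretch, which one can keep supported away from $\partial N$, inflates the modulus of an annulus by a factor tending to infinity with the dilatation. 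Then $J_0=J$, and $t\mapsto J_t$ is a continuous family of complex structures on $X$. Since the inclusion $(N,J_t)\hookrightarrow(X,J_t)$ is holomorphic, since the Kobayashi pseudometric is distance–decreasing under holomorphic maps, and since on a hyperbolic Riemann surface it coincides with the Poincar\'e metric — for which the core geodesic of an annulus of modulus $M$ has length $\pi/M$ — one obtains
\[
\ell_{(X,J_t)}(\gamma)\ \le\ \ell_{(N,J_t)}(\gamma)\ =\ \frac{\pi}{M_t},
\]
so $\ell_{(X,J_1)}(\gamma)$ can be made arbitrarily small, in particular smaller than $\ell_0$.

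It remains to see that $(X,J_1)\not\cong(X,J)$. A biholomorphism between them is in particular a homeomorphism, so it carries free homotopy classes to free homotopy classes preserving their Kobayashi lengths; hence it preserves the set of lengths of closed geodesics, and in particular the quantity $\lambda(X,J):=\inf\{\,\ell_{(X,J)}(\delta):\delta\text{ a closed geodesic}\,\}$. When $\lambda(X,J)>0$ — automatic when $X$ has finite topological type — I would arrange in the construction that $\ell_{(X,J_1)}(\gamma)<\lambda(X,J)$; then $\lambda(X,J_1)\le\ell_{(X,J_1)}(\gamma)<\lambda(X,J)$, so the two surfaces are not biholomorphic, finishing this case.

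The main obstacle is the complementary case $\lambda(X,J)=0$, which arises only for surfaces of infinite topological type carrying closed geodesics of arbitrarily small length (for instance, infinitely many handles pinched ever tighter). Here the single number $\lambda$ no longer detects the deformation, and the crude systole argument must be replaced by a finer biholomorphic invariant read off from the full length spectrum — for example the number of primitive closed geodesics with length in a suitably chosen interval $(a,b)$, where one must select $(a,b)$ so that this count is finite and so that the surgery provably moves $\gamma$, and nothing else of relevance, across an endpoint. Choosing such a curve and such an interval for an arbitrary, possibly infinitely generated, hyperbolic surface is, I expect, the technical heart of the paper; the remainder of the argument sketched above is comparatively soft.
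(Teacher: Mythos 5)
Your treatment of the non-hyperbolic and ``small'' surfaces ($\P_1$ excluded, tori, $\C$, $\C^*$, $\Delta$, $\Delta^*$, annuli, $\P_1\setminus\{0,1,\infty\}$) matches the paper's, and your shrinking construction --- fatten a collar of a simple closed curve of positive stable length so that its hyperbolic length drops, using distance-decrease of inclusions and the modulus of the annulus --- is essentially the paper's Lemma~\ref{shrink}. The detection step via the systole works whenever $\lambda(X,J)>0$. But the case you flag at the end, $\lambda(X,J)=0$ (infinitely generated $\pi_1$ with closed geodesics of length tending to $0$), is a genuine gap, and your proposed fix --- counting primitive geodesics in an interval $(a,b)$ --- does not obviously close it: for an infinite-type surface that count may be infinite for every interval, and even where finite you have no control over how the compactly supported surgery moves the (infinitely many) other classes across the endpoints.

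The paper's resolution is different and is the idea you are missing. Fix one free homotopy class $\gamma$ of simple closed curves with stable length $\lambda(0)=c>0$, and a deformation $J_t$ supported in a compact set with $\lambda(1)<c$. The paper proves (Theorem~\ref{F-cont}, Propositions~\ref{uniform-semi}, \ref{usc}, \ref{lambda-cont}) that for a family of complex structures agreeing outside a compact set the Kobayashi--Royden metric, and hence the stable length $t\mapsto\lambda(t)$ of the fixed class $\gamma$, is \emph{continuous} in $t$; this continuity statement (false without the compact-support hypothesis, as the $\Delta\leadsto\C$ example shows) is the technical heart, proved via Ahlfors--Schwarz, normal families, and embedding the Levi-flat CR manifold $X\times[0,1]$ into a complex manifold with a holomorphic retraction. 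Since $\lambda$ is continuous and non-constant it takes uncountably many values, while the simple length spectrum $\Sigma_{(X,J_0)}$ is countable; by the intermediate value theorem some $\lambda(s)$ lies outside $\Sigma_{(X,J_0)}$, and since $\lambda(s)\in\Sigma_{(X,J_s)}$ the surfaces $(X,J_0)$ and $(X,J_s)$ are not biholomorphic. No lower bound on the systole and no counting is needed; countability of the spectrum plus continuity of one length function suffices. If you want to complete your argument, this is the step to supply, together with a proof of the continuity of the Kobayashi metric under compactly supported deformations, which you currently take for granted.
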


From this we deduce a related result on discrete subgroups
in $PSL_2(\R)$:

\begin{theorem}
Let $F$ be  free group (possibly not finitely generated)
and let $\rho_0:F\to G=PSL_2(\R)$ be a group
homomorphism which embedds $F$ into $G$ as a discrete subgroup.

Then there exists a continuous family of group homomorphisms 
$\rho_t:F\to G$ ($t\in[0,1]$) such that each $\rho_t$ embedds $F$
as a discrete subgroup in $G$ and such that $\rho_0$ is not
conjugate to $\rho_1$. (I.e.~there is no $g\in G$ such that
$\rho_1(\gamma)=g\cdot \rho_0(\gamma)\cdot g^{-1}\ \forall\gamma\in F$.)
\end{theorem}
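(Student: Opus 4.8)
The plan is to pass, via uniformization, between representations of $F$ into $PSL_2(\R)$ and Riemann surfaces with fundamental group $F$, and to run Theorem~1 through this dictionary. We may assume $F\neq\{1\}$. Put $\Gamma_0=\rho_0(F)\subset PSL_2(\R)$. Since $\rho_0$ is injective, $\Gamma_0\cong F$ is free, hence torsion free, so $\Gamma_0$ acts freely and properly discontinuously on the upper half plane $\mathbb{H}$ and $X:=\mathbb{H}/\Gamma_0$ is a Riemann surface whose holomorphic universal covering is $\mathbb{H}$; in particular $(X,J)$ is hyperbolic, so it is not biholomorphic to $\P_1$. Fixing a base point $x_0\in X$ and a point of $\mathbb{H}$ over it identifies the deck transformation group of $\mathbb{H}\to X$ with $\Gamma_0$; composing with $\rho_0^{-1}$ gives an isomorphism $m:\pi_1(X,x_0)\cong F$ for which the holonomy of $\mathbb{H}\to X$, read through $m$, is exactly $\rho_0$.

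I would first dispose of the case $F\cong\Z$ directly, without Theorem~1: here $\Gamma_0=\langle A\rangle$ with $A=\rho_0(1)$ either parabolic or hyperbolic, since a discrete cyclic group generated by an elliptic element is finite and would contradict injectivity of $\rho_0$. Deforming $A$ within the upper triangular (respectively diagonal) matrices so as to move $|\trace A|$ produces a continuous family of discrete faithful $\rho_t$ with $\rho_1$ not conjugate to $\rho_0$, because the absolute value of the trace is invariant under conjugation in $PSL_2(\R)$. Assume henceforth that $F$ is free of rank $\geq2$. Applying Theorem~1 to $(X,J)$ gives a continuous family of complex structures $J_t$ ($t\in[0,1]$) on the smooth surface $X$ with $J_0=J$ and $(X,J_1)$ not biholomorphic to $(X,J)$. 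Each $(X,J_t)$ is a connected non-compact Riemann surface with $\pi_1(X)\cong F$, and each is hyperbolic: by uniformization its holomorphic universal cover is $\P_1$, $\C$ or $\mathbb{H}$; a cover by $\P_1$ would force $(X,J_t)\cong\P_1$, hence $\pi_1=\{1\}$; a cover by $\C$ would force $(X,J_t)\cong\C/\Lambda$ with $\Lambda$ a discrete group of translations, hence $\pi_1\in\{\{1\},\Z,\Z^2\}$; and a free group of rank $\geq 2$ is none of these. Hence $(X,J_t)\cong\mathbb{H}/\Gamma_t$, with $\Gamma_t\subset PSL_2(\R)$ the deck group of a holomorphic universal covering $\mathbb{H}\to(X,J_t)$ --- discrete, since it acts properly discontinuously, and isomorphic to $\pi_1(X)$.

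Because the smooth surface $X$ itself does not change with $t$, composing the isomorphism $\pi_1(X)\cong\Gamma_t$ with the fixed marking $m$ yields, for each $t$, a faithful homomorphism $\rho_t:F\to PSL_2(\R)$ with image $\Gamma_t$; normalizing the covering chosen at $t=0$ so that its deck group is $\Gamma_0$ makes $\rho_0$ the given representation, and each $\rho_t$ is discrete because $\Gamma_t$ is. Finally, $\rho_1$ is not conjugate to $\rho_0$: a relation $\rho_1=g\rho_0g^{-1}$ would give $\Gamma_1=g\Gamma_0g^{-1}$, so the M\"obius transformation $g$ of $\mathbb{H}$ would descend to a biholomorphism $\mathbb{H}/\Gamma_0\to\mathbb{H}/\Gamma_1$, i.e.\ a biholomorphism $(X,J_0)\to(X,J_1)$, contradicting Theorem~1.

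The remaining point --- which I expect to be the real obstacle --- is the continuity of $t\mapsto\rho_t$, meaning that $t\mapsto\rho_t(\gamma)$ is continuous in $PSL_2(\R)$ for every fixed $\gamma\in F$; equivalently, that the uniformizing representation of a hyperbolic Riemann surface varies continuously with the complex structure. Here $\rho_t$ is the holonomy of the $(PSL_2(\R),\mathbb{H})$-structure underlying $J_t$, equivalently of the Poincar\'e metric, which on a hyperbolic Riemann surface is intrinsic and coincides with the Kobayashi metric. Since the $J_t$ vary continuously, the associated isothermal charts do too, and hence so does the developing map $D_t:\widetilde X\to\mathbb{H}$ obtained by analytically continuing one fixed chart along paths, uniformly on compact subsets of $\widetilde X$; evaluating the equivariance $D_t\circ\gamma=\rho_t(\gamma)\circ D_t$ on a fixed small disc and on its $\gamma$-translate then gives continuity of $t\mapsto\rho_t(\gamma)$. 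What needs care is that $X$ is non-compact and $F$ need not be finitely generated, so there is no recourse to finite-dimensional Teichm\"uller theory or to global compactness: the continuity has to be established locally, one finite chain of charts --- that is, one element $\gamma$ --- at a time. Granted this, the family $(\rho_t)$ has all the properties required.
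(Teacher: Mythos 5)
Your overall strategy coincides with the paper's: deform the complex structure on $X=\mathbb{H}/\rho_0(F)$ using Theorem~1, uniformize each $(X,J_t)$, and read off the holonomy. The dictionary, the non-conjugacy argument, and the separate elementary treatment of $F\cong\Z$ are all fine. But the step you yourself flag as ``the real obstacle'' --- continuity of $t\mapsto\rho_t$ --- is genuinely not established by your argument, and it is exactly the point the paper spends its last two sections on. Your claim that ``since the $J_t$ vary continuously, the associated isothermal charts do too, and hence so does the developing map $D_t:\widetilde X\to\mathbb{H}$'' conflates two different structures: isothermal coordinates determine the \emph{complex} structure locally, but there is no canonical local chart into $\mathbb{H}$ attached to $J_t$ --- the $(PSL_2(\R),\mathbb{H})$-structure is the Poincar\'e metric, which is a \emph{global} object depending on all of $(X,J_t)$. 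Continuity of $J_t$ alone does \emph{not} imply continuity of the uniformization: the paper's own remark after Theorem~\ref{F-cont} exhibits a continuous family on the disc with $(D,J_0)\simeq\Delta$ and $(D,J_t)\simeq\C$ for $t\ne0$, and the same mechanism (pulling back along a family of radial diffeomorphisms) produces continuous families of complex structures on the annulus whose moduli, hence whose holonomy traces, jump discontinuously. So without further input your $\rho_t$ need not be continuous, and for $X\simeq\P_1\setminus\{0,1,\infty\}$ (which occurs for $F=F_2$) the family supplied by Theorem~1 is precisely one of these uncontrolled, non-compactly-supported deformations.

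The paper closes this gap in two steps that are absent from your proposal. First, it does not take an arbitrary family from Theorem~1 but chooses families for which it can \emph{prove} that the Kobayashi pseudodistance varies continuously in $t$: for generic $X$ the deformation of Lemma~\ref{shrink} is supported in a compact set and Theorem~\ref{F-cont} (via the Ahlfors--Schwarz lemma, normal families, and a Stein-neighbourhood argument) gives the continuity; for the exceptional surfaces $\Delta^*$ and $\P_1\setminus\{0,1,\infty\}$ it substitutes the special families $X_t=S\setminus\{|z|\le t\}$ and proves continuity by a separate lemma. Second, it deduces from the continuity of the Kobayashi pseudodistance that the normalized uniformizing maps $\phi_t:(\tilde X,\tilde J_t)\to\Delta$ (pinned down by $\phi_t(p)=0$ and a fixed tangent direction --- a normalization you also need but do not make in your main construction) and their inverses form normal families and hence depend continuously on $t$; continuity of $\rho_t(\gamma)=\phi_t\circ\alpha_\gamma\circ\phi_t^{-1}$ then follows. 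To repair your proof you would need to either import these continuity results or prove an equivalent statement that the uniformizing representation depends continuously on a \emph{suitably controlled} family of complex structures; as written, the assertion is false for general continuous families.
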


\section{Classical Theory}
Since the 19th century it is known that there is a moduli space
for compact Riemann surfaces of a given genus $g$, 
and that the (complex) dimension of this
moduli space equals $3g-3$ for $g\ge 2$ and equals $g$ for $g\le 1$.

This of course implies our statement for the special case of
{\em compact} Riemann surfaces.

There is a generalization of this theory to {\em algebraic complex
curves}, i.e., Riemann surfaces which can be compactified by adding
finitely many points.

If $g$ denotes the genus of the compactification $\bar X$ of such
a complex algebraic curve $X$ and if $d$ denotes the cardinality
of the set $\bar X\setminus X$, then the respective moduli space
has dimension $3g-3+d$ if $g\ge 2$ and every $d\in\N\cup\{0\}$.
For $g=1$ and $d\ge 1$ the dimension of the moduli space is $d$.
For $g=0$ it equals $\max\{0,d-3\}$.

As a consequence, the only complex algebraic curves which do not admit
any non-trivial deformation within the category of algebraic curves
are $\P_1$, $\P_1\setminus\{\infty\}=\C$, $\C^*$ and
$\P_1\setminus\{0,1,\infty\}$. 
(However, as we will see below, the complex structure on $\C$ may
be deformed to that of the unit disc. Hence $\C$ {\em does} admit a
deformation in the category of Riemann surface although it does
not within the category of complex algebraic curves.
Similarily for $\C^*$ and $\P_1\setminus\{0,1,\infty\}$.)

For arbitrary non-compact
Riemann surfaces there is no reasonable theory of moduli spaces.

Already the situation for complex structures on the orientable
surface $\R\times S^1$ is quite intricate:

Every complex structure on $\R\times S^1$ 
defines a Riemann surface biholomorphic
to
\[
A(r,s)=\{z\in\C: r<|z|<s\}
\]
with $0\le r<s\le+\infty$.
The ratio $s/r$ is an invariant of the complex structure.
In this sense the map $(r,s)\mapsto r/s\in[0,1[$
yields almost a moduli space. However, for $r/s=0$ (i.e. $r=0$ or
$s=\infty$) there are {\em two} inequivalent complex structures:
that from $\C^*=A(0,\infty)$ and that from $\Delta^*=A(0,1)$.

Thus one may say that the complex structures on $\R\times S^1$ are
pa\-ra\-me\-tri\-zed by a real-one dimensional non-Hausdorff space:
$[0,1[$ with the zero replaced by a double point.

For arbitrary non-compact Riemann surfaces (in particular those whose
fundamental group is not finitely generated), there is no
reasonable holomorphic classification theory.

(There does exists a {\em topological classification of real surfaces}
though, cf.~theorem of Ker\'ekj\'art\'o., see~\cite{R})

\section{The special cases of $\C$ and $\Delta$}

Here we consider some special cases where (as we will see later) our general
methods can not be applied.

First we show that the complex plane $\C$ can be deformed to
the unit disc $\Delta$ and vice versa.

For this purpose we consider some auxiliary functions.

We define
\begin{align}
\rho_t(r)   &= \left( r + t \arctan\left(\frac{\pi}{2}r\right)\right),\\
\rho^*_t(r) &= \arctan\left( \frac{\pi}{2}r\left(\frac{1}{1+t}\right)\right)
\end{align}
and
\[
\phi_t:z\mapsto z\frac{\rho_t(|z|)}{|z|},\ \ 
\phi^*_t:z\mapsto z\frac{\rho^*_t(|z|)}{|z|}.
\]
We note that for $t\ne 0$, the unit disk $\Delta$ is mapped
bijectively onto $\C$ by $\phi_t$ and bijectively onto a bounded disc
with some radius $s(t)\in\R^+$ by $\phi^*_t$.
Furthermore $\phi_0$ resp.~$\phi_0^*$ maps the unit disk
bijectively onto $\Delta$ resp.~$\C$.

Therefore pulling back the standard complex structure on $\C$ via
$\phi_t$ resp.~$\phi_t^*$ to the unit disc yields a family of
complex structures $J_t$ resp.~$J^*_t$ on $\Delta$ such that
$(\Delta,J_0)$ and $(\Delta,J_t^*)$ ($t\ne 0)$ are biholomorphic
to the unit disc while
$(\Delta,J_0^*)$ and $(\Delta,J_t)$ ($t\ne 0)$ are biholomorphic
to $\C$.

Hence both $\Delta$ and $\C$ admit non-trivial infinitesimal deformations
of the complex structure.

Since $\C^*=\C\setminus\{0\}$ and $\Delta^*=\Delta\setminus\{0\}$ just by
restricting these families of complex structures on $\C$ resp.~$\Delta$
 to the complement
of $\{0\}$ we see that $\C^*$ and $\Delta^*$ can be deformed as well.

Similarily we obtain that the standard complex structure on $\P_1\setminus\{0,1,\infty\}
\simeq\C\setminus\{0,\frac12\}$ can be deformed to $\Delta\setminus\{0,\frac12\}$.

Thus we obtain:
\begin{proposition}\label{case-special}
Let $(X,J)$ be a Riemann surface which is biholomorphic to
$\C$, $\C^*$, $\P_1\setminus\{0,1,\infty\}$, $\Delta$ or $\Delta^*$.

Then there exists a continuous family of complex structures $J_t$
pa\-ra\-me\-tri\-zed by $t\in[0,1]$ such that $J_0=J$ and such that $(X,J_1)$
is not  biholomorphic to $(X,J)$.
\end{proposition}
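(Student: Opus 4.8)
The plan is to read off all five cases from the explicit deformations constructed in the preceding paragraphs, the only work remaining being (i) the passage from an abstract $(X,J)$ to a concrete model domain, (ii) continuity in $t$ of the resulting families, and (iii) the non-isomorphism of the two endpoints. For (i): by hypothesis the real surface underlying $X$ carries, besides $J$, a structure biholomorphic to one of $\C$, $\C^{*}$, $\P_1\setminus\{0,1,\infty\}$, $\Delta$, $\Delta^{*}$; call this model $Y$. It suffices to exhibit, on some fixed real surface $M$, a continuous family $K_t$ ($t\in[0,1]$) of complex structures with $(M,K_0)$ biholomorphic to $Y$ and $(M,K_1)$ not biholomorphic to $Y$. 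Indeed, fixing a biholomorphism $f\colon(X,J)\to(M,K_0)$, the pulled-back family $J_t:=f^{*}K_t$ satisfies $J_0=f^{*}K_0=J$, and $(X,J_1)\cong(M,K_1)\not\cong Y\cong(X,J)$, as required. (In real dimension two every such $J_t$ is automatically integrable, so there is no distinction between ``complex structure'' and ``almost complex structure'' here.)

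For the various $Y$ I would take: $M=\Delta$ with $K_t$ the pullback of the standard structure under $\phi_t$ when $Y=\Delta$, so that $K_0$ is standard (since $\rho_0(r)=r$) and $(\Delta,K_1)\cong\C$; $M=\Delta$ with $K_t$ the pullback under $\phi_t^{*}$ when $Y=\C$, so that $(\Delta,K_0)\cong\C$ and $(\Delta,K_1)\cong\Delta$; $M=\Delta^{*}=\Delta\setminus\{0\}$ with $K_t$ the restriction of the first, resp.\ second, family when $Y=\Delta^{*}$, resp.\ $Y=\C^{*}$, which is legitimate because $\phi_t(0)=0=\phi_t^{*}(0)$; and $M=\Delta\setminus\{0,p\}$ with $p:=(\phi_0^{*})^{-1}(\frac12)$, together with the restriction of the $\phi_t^{*}$-family, when $Y=\P_1\setminus\{0,1,\infty\}\cong\C\setminus\{0,\frac12\}$, so that $(\Delta\setminus\{0,p\},K_0)\cong\C\setminus\{0,\frac12\}$ and $(\Delta\setminus\{0,p\},K_1)$ is a bounded disc with two points deleted. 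Continuity of $t\mapsto K_t$ then reduces to $\phi_t$ and $\phi_t^{*}$ being diffeomorphisms of $\Delta$ onto their images, jointly smooth in $(z,t)$: away from $0$ this is clear since $\rho_t$ and $\rho_t^{*}$ are smooth and strictly increasing in $r$, while at the origin one checks that $r\mapsto\rho_t(r)/r$ and $r\mapsto\rho_t^{*}(r)/r$ extend to smooth functions of $r^{2}$, whence $z\mapsto z\,\rho_t(|z|)/|z|$ is smooth there as well (and is a genuine deformation of, not merely a continuous perturbation of, the identity). The restricted families inherit these properties since the deleted points are preserved by $\phi_t$ and $\phi_t^{*}$.

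Finally, (iii): the two endpoints are in each case $\C$ versus $\Delta$, $\C^{*}$ versus $\Delta^{*}$, or $\P_1\setminus\{0,1,\infty\}$ versus $\Delta$ minus two points, and none of these pairs consists of biholomorphic surfaces. We have $\C\not\cong\Delta$ by Liouville's theorem; $\C^{*}\not\cong\Delta^{*}$ since $\Delta^{*}$ is Kobayashi hyperbolic — it carries the bounded nonconstant holomorphic function $z$ — whereas any bounded holomorphic function on $\C^{*}$ extends over the puncture and is then constant; and $\P_1\setminus\{0,1,\infty\}\not\cong\Delta$ minus two points because the former is compactified by adjoining its three punctures (hence is of finite type, with finite Poincar\'e area) whereas filling in the two punctures of the latter yields $\Delta$, which admits no compactification by finitely many points (equivalently, the latter has infinite Poincar\'e area, by Schwarz--Pick monotonicity against $\Delta$). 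The one place that needs genuine care is the joint smoothness in (ii) at $z=0$ and $t=0$ — confirming that $z\mapsto z\,\rho_t(|z|)/|z|$ really is a smooth family passing through the identity rather than merely a continuous one — together with the harmless bookkeeping of the marked points in the three restricted cases; everything else is routine.
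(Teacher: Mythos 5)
Your proposal is correct and follows essentially the same route as the paper: it uses the explicit families $\phi_t$, $\phi_t^*$ from the preceding section to deform $\Delta$ to $\C$ and back, restricts them to the punctured models for $\C^*$, $\Delta^*$ and $\P_1\setminus\{0,1,\infty\}\simeq\C\setminus\{0,\tfrac12\}$, and merely makes explicit what the paper leaves implicit (smoothness at the origin, and the non-isomorphism of the endpoints via Liouville, hyperbolicity, and finite versus infinite Poincar\'e area). One remark worth recording: for the stated mapping properties ($\phi_t(\Delta)=\C$ for $t\neq0$, $\phi_0^*(\Delta)=\C$) to hold, the formulas for $\rho_t$ and $\rho_t^*$ must use $\tan$ rather than $\arctan$ --- a typo in the paper that your argument silently inherits.
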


\section{Ahlfors Schwarz Lemma}
We will use a classical result which is known as Ahlfors-Schwarz lemma.
The following states a version of the Ahlfors Schwarz lemma in the
form most suitable for us.

\begin{theorem}
Let $X$ be a Riemannian surface with hermitian metric $h$.
Assume that there is a constant $C>0$ such that the Gaussian
curvature of $h$ is bounded from above by $-C$.

Then the inequality
\[
||Df_p||\le \frac 1 C
\]
holds for every holomorphic map $f:\Delta\to X$ and every $p\in\Delta$,
where $||\ ||$ denotes the operator norm with respect to the hermitian
metric $h$ on $X$ and the Poincar\'e metric on the unit disc
$\Delta$.
\end{theorem}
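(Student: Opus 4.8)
\emph{Proof strategy.} The statement is the infinitesimal form of the assertion that $f$ contracts the Poincar\'e metric up to the factor $1/C$, so I would first reformulate it as a pointwise comparison of conformal densities. If $f$ is constant there is nothing to prove, so assume $f$ non-constant; then the critical set $Z=\{df=0\}$ is discrete in $\Delta$. Writing $f^*h=\mu^2|dz|^2$ with $\mu\ge 0$ continuous, smooth and positive off $Z$, and writing $\lambda$ for the density of the Poincar\'e metric on $\Delta$ normalized to curvature $-1$, the operator norm equals $\|Df_p\|=\mu(p)/\lambda(p)$; so it suffices to prove $\mu\le \tfrac{1}{\sqrt C}\,\lambda$ on $\Delta$, the precise constant in the statement being a matter of which normalization of the Poincar\'e metric one fixes.

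\emph{Exhaustion and maximum principle.} I would establish this comparison by a maximum-principle argument on the relatively compact sub-discs $\Delta_r=\{|z|<r\}$, $0<r<1$. Let $\lambda_r(z)=\tfrac{2r}{r^2-|z|^2}$ be the density of the curvature $-1$ Poincar\'e metric on $\Delta_r$ and put $v_r=\mu^2/\lambda_r^2$. Since $f$ is defined on all of $\Delta$, $\mu$ is bounded on $\overline{\Delta_r}$, whereas $\lambda_r\to\infty$ as $|z|\to r$; hence $v_r\to 0$ towards $\partial\Delta_r$ and $v_r$ attains its maximum at an interior point $z_0\in\Delta_r$. If that maximum is $0$ we are done on $\Delta_r$; otherwise $\mu(z_0)>0$, so in particular $z_0\notin Z$, and $\log v_r=2\log\mu-2\log\lambda_r$ is smooth near $z_0$ with $\Delta\log v_r(z_0)\le 0$.

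\emph{Inserting the curvature hypotheses.} There are two inputs. Off $Z$ the holomorphic map $f$ is a local isometry of $(\Delta,f^*h)$ onto $(X,h)$, so the Gaussian curvature of $f^*h$ equals $K_h\circ f\le -C$, which gives $\Delta\log\mu=-(K_h\circ f)\mu^2\ge C\mu^2$; and $\lambda_r$ has curvature $-1$, so $\Delta\log\lambda_r=\lambda_r^2$. Hence at $z_0$ one gets $0\ge\Delta\log v_r(z_0)\ge 2\bigl(C\mu^2(z_0)-\lambda_r^2(z_0)\bigr)$, so $v_r\le v_r(z_0)\le 1/C$ on all of $\Delta_r$, i.e.\ $\mu^2\le\tfrac1C\lambda_r^2$ there. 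Letting $r\uparrow 1$ we have $\lambda_r\to\lambda$ pointwise, hence $\mu^2\le\tfrac1C\lambda^2$ on $\Delta$; reading off the operator norm and adjusting the normalization constant completes the proof.

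\emph{Main obstacle.} The curvature bookkeeping --- in particular the identity $\Delta\log\mu=-(K_h\circ f)\mu^2$, which rests on $\Delta\log|f'|\equiv 0$ and on the transformation of the Laplacian under the holomorphic map $f$ --- is routine. The delicate point is the maximum principle: one must know that the supremum of $v_r$ is attained in the \emph{interior} of $\Delta_r$, and this is precisely why one passes to relatively compact sub-discs whose Poincar\'e density blows up at the boundary rather than working on $\Delta$ directly; one must also make sure the maximum can be taken off the critical set $Z$, so that the inequality $\Delta\log v_r(z_0)\le 0$ (valid at an interior maximum of a smooth function) is available. The limit $r\uparrow 1$ is then harmless.
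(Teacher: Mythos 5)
Your proposal is correct and is the standard Ahlfors maximum-principle argument; the paper itself gives no proof of this theorem and simply refers to Kobayashi, where essentially this argument appears. Your remark about the constant is apt: with the curvature $-1$ normalization your argument yields $\|Df_p\|\le 1/\sqrt{C}$ for the operator norm on tangent vectors (equivalently $f^*h\le C^{-1}\,ds^2_{\mathrm{Poincar\acute{e}}}$ as quadratic forms), so the bound $1/C$ as literally stated is a normalization issue, harmless for the only way the lemma is used later (existence of a uniform bound in Proposition~\ref{uniform-semi}).
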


For a proof see e.g.~Kobayashi. 

\section{Riemann surfaces of finite type}
\begin{definition}
A Riemann surface is called ``of finite type'' if its fundamental
group is finitely generated.
\end{definition}

Evidently every complex algebraic curve is a Riemann surface of finite
type. But there are also non-algebraic Riemann surfaces of finite type,
for example the unit disc and the annuli $A(r,1)=\{z\in\C:r<|z|<1\}$
($r\in[0,1[$).

The following well-known result is a key tool.
\begin{theorem}
Every Riemann surface $X$ of finite type admits 
an open embedding $X\hookrightarrow \hat X$ 
into a compact Riemann surface $\hat X$ such that every connected component of the complement
of $X$ in $\hat X$ is either a point
or isomorphic to a closed disc.
\end{theorem}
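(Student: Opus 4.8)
The plan is to split the proof according to the type of the universal cover of $X$, and to treat the hyperbolic case by uniformization. If $X$ is compact we simply take $\hat X=X$, so assume $X$ is non-compact. Then $\pi_1(X)$ is free (an open surface deformation retracts onto a graph), hence free of finite rank by the finite-type hypothesis. If the universal cover of $X$ is $\P_1$ or $\C$, then $X$ is one of $\C$ or $\C^*=\C\setminus\{0\}$ (these are the only non-compact such quotients), and in both cases $\hat X=\P_1$ works, the complement being one point resp.\ two points. So the substantial case is the hyperbolic one: $X=\Delta/\Gamma$ with $\Gamma$ a finitely generated, torsion-free (the deck action being free) Fuchsian group.

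For such $\Gamma$ I would invoke the structure theory of finitely generated Fuchsian groups: $\Gamma$ admits a fundamental polygon in $\Delta$ with finitely many sides and is in particular geometrically finite. Consequently $X$ has only finitely many ends, and each end has a neighbourhood which is conformally of exactly one of two types --- a \emph{cusp}, arising from a maximal parabolic cyclic subgroup, biholomorphic to a punctured disc $\Delta^*$; or a \emph{funnel}, arising from a free side of the polygon (a boundary geodesic of the compact convex core), biholomorphic to an annulus $A(r,1)=\{r<|w|<1\}$ with $0<r<1$ ($X=\Delta$ itself being the degenerate case, compactified directly by $\hat X=\P_1$). It therefore suffices to cap off each end, analytically, by a point or a closed disc; gluing in these finitely many caps then yields a compact $\hat X\supset X$ with $\hat X\setminus X$ a finite disjoint union of points and closed discs.

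The two caps come from explicit quotient computations. For a cusp, conjugating the generating parabolic to $z\mapsto z+1$ on the upper half plane, a horoball neighbourhood $\{\operatorname{Im}z>c\}$ descends via $w=e^{2\pi i z}$ to $\{0<|w|<\varepsilon\}$, and adjoining $w=0$ caps it with a point. For a funnel, conjugating the generating hyperbolic to $z\mapsto\lambda z$ ($\lambda>1$), the wedge between its axis and the ideal boundary descends, via $\zeta=\log z$ followed by $w=\exp(2\pi i\zeta/\log\lambda)$, to some $A(r,1)$ with the end at $|w|\to 1$ and the rest of $X$ attached along $|w|\to r$; since $A(r,1)$ is the open subset $\{r<|w|<1\}$ of $\hat\C\setminus\{|w|\le r\}$, gluing $X$ to $\hat\C\setminus\{|w|\le r\}$ along this annulus caps the funnel with the closed disc $\{|w|\ge 1\}\cup\{\infty\}$. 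Performing this at all ends simultaneously (they have pairwise disjoint neighbourhoods) produces the desired $\hat X$, which is compact because $X$ has been enlarged only by finitely many points and closed discs.

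The main obstacle is the structural input of the second paragraph: that a finitely generated Fuchsian group has a finite-sided fundamental domain and that the ends of the quotient are exactly cusps and funnels of the stated conformal types. This is classical --- it can be extracted from the theory of Fuchsian groups, or from the Ahlfors finiteness theorem together with the fact that finite generation implies geometric finiteness for Fuchsian groups --- but it is precisely what makes the result "well-known" rather than elementary; the two explicit quotient computations and the final gluing are routine once it is in hand.
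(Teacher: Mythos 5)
Your argument is correct, but it reaches the key structural fact --- that each end of $X$ is conformally a punctured disc or an annulus, hence can be capped by a point or a closed disc --- by a genuinely different route than the paper. You uniformize $X$ globally as $\Delta/\Gamma$ and invoke the structure theory of finitely generated Fuchsian groups (finite-sided fundamental polygon, geometric finiteness, ends = cusps or funnels), after which your two quotient computations and the gluing coincide with the paper's final step. The paper instead works downstairs: it exhausts $X$ by sublevel sets of a subharmonic Morse function, uses finite generation of $\pi_1(X)$ to find a sublevel set $X_c$ carrying the whole topology, notes that each component of $X\setminus\overline{X_c}$ is a surface with infinite cyclic fundamental group, and only then applies uniformization --- to these end components rather than to $X$ --- to identify each with some $A(r,1)$, $0\le r<1$, which embeds in $\Delta$ with complement a closed disc or a point. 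The paper's route avoids the heavier classical input about Fuchsian groups and treats the hyperbolic and non-hyperbolic cases uniformly, with no need for your initial trichotomy on the universal cover; yours, in exchange, produces explicit conformal coordinates at each end directly from the group elements. One small point to tighten: compactness of $\hat X$ does not follow merely from the fact that ``$X$ has been enlarged only by finitely many points and closed discs'' (adjoining finitely many points to $\C$ inside $\C$ compactifies nothing); the correct reason is that every end of $X$ has been capped, so $\hat X$ is the union of a compact core with the closures of the finitely many capped end neighbourhoods, each of which is compact.
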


Sketch of proof:

Let $\rho$ be a subharmonic Morse function on $X$. Because $\pi_1$
is finitely generated, there are only finitely many critical points.
Therefore $X$ is diffeomorphic to $X_c=\{x\in X:\rho < c\}$ for some
$c>0$ and $X\setminus \bar X_c$ is homotopic to $\partial X_c$. Now
$\partial X_c$ is a compact real one-dimensional manifold. It follows
that each connected component of $X\setminus\bar X_c$ has an infinite
cyclic group as fundamental group. Using the uniformization theorem,
it follows that each connected component of $X\setminus \bar X_c$
is biholomorphic to $A(r,1)$ for some $1>r>0$.
Using the natural embedding $A(r,1)\hookrightarrow\Delta$
we obtain the compactification $X\subset\bar X$.

We introduce some notion.
\begin{definition}\label{std-cpt}
Let $X$ be a Riemann surface. A ``standard compactification'' of $X$
is an open holomorphic embedding $i:X\to Y$ into a compact
Riemann surface $Y$ 
such that for every connected component $W$ of $Y\setminus i(X)$ there
exists an open neighborhood $V$ of $W$ in $Y$, a biholomorphic
map $\phi:V\to\Delta$ and a real number $0\le r<1$ such that
$\phi(W)=\phi(V\setminus i(X))=\{z:|z|\le r\}$.
\end{definition}

The above result guarantees that every Riemann surface of finite
type admits a {\em standard compactification}.

\section{Definition of the hyperbolic length spectrum}

We recall the definition of the (infinitesimal)
Kobayashi--Royden pseudometric $F_X$: For a complex space $X$
one defines $(F_X)_p:T_pX\to\R$ as
\[
(F_X)_p(v)=\inf\{ |w| : \phi_*w=v,\ \exists\phi:(\Delta,0)\to(X,p)
\text{ holo.}\}
\]

The uniformization theorem implies that for a 
Riemann surface $X$ either
$F_X$ vanishes identically or $F_X$ is a complete K\"ahler metric
of constant negative curvature.

Then one can define the hyperbolic length $L(\gamma)$
of a differentiable path
$\gamma:S^1=\R/\Z\to X$ as
\[
L(\gamma) = \int_0^1 F_X(\gamma'(t))dt.
\]

Given a manifold $X$ (here always a Riemann surface), a
{\em simple path} is  injective and immersive 
differentiable map $\gamma:S^1\to X$.

Given a hyperbolic  Riemann surface $X$, 
let $\Gamma_X$ be the set of free homotopy classes
of  simple paths. This can be regarded as a subset of
the quotient space of the fundamental group $\pi_1(X)$
by the equivalence relation given by inner automorphisms
(i.e.~conjugation) of $\pi_1(X)$.

For every $\gamma\in\Gamma_X$, 
we define a ``stable hyperbolic length'' $\Lambda(\gamma)$
as the infimum of the hyperbolic
length of all simple paths representing $\gamma$.
Let
\begin{equation}
\Sigma_X=\{ \Lambda(\gamma):\gamma\in\Gamma_X\}\label{spectrum}
\end{equation}

$\Sigma_X$ is a countable subset of $\R^+_0$.

Obviously, $\Sigma_X$ is an invariant of the complex structure
on $X$. We will use this invariant to distinguish non-equivalent
complex structure.
For this purpose we will show that $\Sigma_X$ is non-trivial
(i.e.~$\Sigma_X\ne\{0\}$) for almost all hyperbolic
Riemann surfaces. Furthermore we will show that there is always
a family of complex structures with changing $\Sigma_X$ as
soon as $\Sigma_X\ne\{0\}$.

More precisely:

{\em  $\Sigma_X\ne\{0\}$ unless $X$ is isomorphic to 
$\Delta=\{
z:|z|<1\}$, 
$\Delta\setminus\{0\}$ or $\P_1\setminus\{0,1,\infty\}$.}

\section{Non-triviality of the length spectrum}

\begin{lemma}\label{subdelta}
Let $E\subset\Delta$ be a compact subset and let $X=\Delta\setminus E$.

If $E$ contains at least two points, there exists a simple path on $X$
which can not be deformed to a simple path of arbitrary small length.
\end{lemma}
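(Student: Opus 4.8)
\emph{Proof proposal.} The plan is to exhibit one simple closed curve on $X$ whose free homotopy class $[\gamma]\in\Gamma_X$ has positive stable hyperbolic length $\Lambda([\gamma])$; this is exactly the assertion. Since $X=\Delta\setminus E$ is a subdomain of the hyperbolic disc it is itself hyperbolic, and I will compare its Kobayashi metric $F_X$ with the Poincar\'e metric of $\Delta$, whose distance I denote $d_\Delta$. Fix two distinct points $p,q\in E$ and set $\delta=d_\Delta(p,q)>0$. As $E$ is compact in $\Delta$ we have $\sup_{z\in E}|z|<1$; pick $\rho$ with $\sup_{z\in E}|z|<\rho<1$ and let $\gamma$ be the circle $\{|z|=\rho\}$, traversed once. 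Then $\gamma$ is a simple path on $X$, and as a Jordan curve in $\C$ it encloses all of $E$, in particular $p$ and $q$. I claim $\Lambda([\gamma])\ge 2\delta$, which proves the lemma.

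First I would show that every simple path $\gamma'$ on $X$ representing the class $[\gamma]$ still encloses $p$ and $q$. Such a $\gamma'$ is an embedded circle contained in $X\subset\C\setminus\{p,q\}$, and the winding numbers of a closed curve in $\C\setminus\{p,q\}$ about $p$ and about $q$ depend only on its free homotopy class there, hence only on its free homotopy class in $X$ (one uses merely the induced map on fundamental groups, so the possibly complicated topology of $X$ plays no role). For $\gamma$ both winding numbers equal $1$, so the same is true of $\gamma'$; since $\gamma'$ is a Jordan curve, $p$ and $q$ therefore lie in the bounded Jordan domain $D'$ cut out by $\gamma'$.

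Next I would bound $L(\gamma')=\int_{S^1}F_X(\dot\gamma')$ from below. The inclusion $X\hookrightarrow\Delta$ is holomorphic, hence does not increase the Kobayashi--Royden pseudometric, so $F_X\ge F_\Delta$ on $TX$ and $L(\gamma')\ge L_\Delta(\gamma')$, the Poincar\'e length. Splitting $\gamma'$ at any two of its points into two sub-arcs whose lengths add up to $L_\Delta(\gamma')$ shows that the $d_\Delta$-distance of those two points is at most $\tfrac12 L_\Delta(\gamma')$, so $\mathrm{diam}_{d_\Delta}(\gamma')\le\tfrac12 L_\Delta(\gamma')$. On the other hand, for fixed $w$ the function $d_\Delta(\cdot,w)$ has no interior local maximum, so the diameter of the compact set $\overline{D'}$ is attained on $\partial D'=\gamma'$; as $p,q\in\overline{D'}$ this yields $\mathrm{diam}_{d_\Delta}(\gamma')=\mathrm{diam}_{d_\Delta}(\overline{D'})\ge d_\Delta(p,q)=\delta$. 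Combining, $L(\gamma')\ge 2\delta$ for every simple representative $\gamma'$ of $[\gamma]$, and therefore $\Lambda([\gamma])\ge 2\delta>0$.

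The argument is short, and I do not expect a serious obstacle inside it: the two mildly delicate points are the homotopy-invariance of the winding numbers (handled by passing to the map $\pi_1(X)\to\pi_1(\C\setminus\{p,q\})$) and the elementary remark that the hyperbolic diameter of a Jordan domain is realized on its boundary. The genuine difficulties should appear only afterwards, when one must deduce the non-triviality of $\Sigma_X$ for general hyperbolic Riemann surfaces that are not presented as subdomains of $\Delta$.
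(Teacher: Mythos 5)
Your proof is correct and follows essentially the same route as the paper's: both arguments fix two points $p,q\in E$, observe that every simple curve in the relevant homotopy class must still enclose $p$ and $q$, and then bound the hyperbolic length from below by twice the distance between $p$ and $q$ using the distance-decreasing inclusion $X\hookrightarrow\Delta$. The only cosmetic differences are that the paper normalizes $p=0$ and compares with the \emph{euclidean} distance rather than $d_\Delta$, and leaves the winding-number/homotopy-invariance step implicit, which you spell out.
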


\begin{proof}
Let $p,q\in E$ with $p\ne q$. Applying a suitable automorphism of the
unit disc, we may assume $p=0$. Then every simple path in $X$ surrounding
both $p$ and $q$ has a euclidean length of at least $2|q|$.
Now the euclidean distance is an lower bound for the Poincar\'e 
distance on the unit disc $\Delta$ which in turn is an lower bound
for the hyperbolic distance on $X\subset\Delta$.
Therefore every simple curve in $X$ surrounding all of $E$ has hyperbolic
length at least $2|q|$.
\end{proof}

%
%

\begin{lemma}\label{rc}
Let $X$ be a relatively compact 
smoothly bounded domain in a hyperbolic Riemann surface $Y$.

Then there exists a number $c>0$ such that every closed curve of hyperbolic
length less than $c$ in $X$ is homotopic to a constant map.
\end{lemma}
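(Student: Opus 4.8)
The plan is to exploit the fact that the inclusion $X\hookrightarrow Y$ is distance-decreasing for the respective Kobayashi/hyperbolic metrics, together with compactness of $\bar X$. First I would fix the hyperbolic metric $g_Y$ on $Y$ (which exists and is complete of constant negative curvature, since $Y$ is hyperbolic) and observe that on the relatively compact set $\bar X$ the metrics $g_X$ and $g_Y$ are comparable: there is a constant $A>0$ with $F_X\ge F_Y\ge A^{-1}F_X$ pointwise on $X$ — the first inequality is the distance-decreasing property of the inclusion, and the second follows because $\bar X$ is compact in $Y$ so the continuous positive function $F_Y/F_X$ (or rather the ratio of the associated Riemannian metrics) is bounded below on $\bar X$. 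Actually for the statement we only need the lower bound $F_X\ge F_Y$ together with a lower bound on $F_Y$ over the compact set $\bar X$ with respect to some background metric; I will phrase it that way to avoid worrying about the behavior of $F_X$ near $\partial X$.

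Next, let $d$ be a smooth background Riemannian distance on $Y$ and cover $\bar X$ by finitely many geodesically convex coordinate balls $U_1,\dots,U_N$ for $d$; let $\epsilon>0$ be a Lebesgue number, so that any subset of $\bar X$ of $d$-diameter less than $\epsilon$ lies in a single $U_i$. Since $\bar X$ is compact in $Y$ and $F_Y$ is a continuous metric that is positive on the nonzero tangent vectors over $\bar X$, there is a constant $\lambda>0$ with $F_Y\ge \lambda\, d$-norm on $T Y|_{\bar X}$; hence the $F_Y$-length, and a fortiori the $F_X$-length, of any curve in $X$ is at least $\lambda$ times its $d$-length. Set $c=\lambda\epsilon$. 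If $\gamma$ is a closed curve in $X$ of hyperbolic (i.e. $F_X$-) length less than $c$, then its $d$-length is less than $\epsilon$, so its image has $d$-diameter less than $\epsilon$ and therefore lies in one geodesically convex ball $U_i$; a closed curve contained in a contractible (indeed convex) open subset of $Y$ is homotopic to a constant map in $U_i$, hence in $X$ if we are careful — but note $U_i$ need not lie in $X$.

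This last point is the main obstacle, and it is where the smooth-boundary hypothesis is used. To handle it I would instead run the argument inside $X$ itself: choose the background metric and the covering so that each $U_i\cap X$ is still contractible. Concretely, for an interior point $p\in X$ pick a small $d$-ball contained in $X$; for a boundary point $p\in\partial X$ use that $\partial X$ is a smooth $1$-manifold, so in suitable coordinates near $p$ the set $X$ looks like a half-disc $\{\,\mathrm{Im}\,z>0\,\}\cap\{|z|<\delta\}$, which is contractible and whose intersections with small balls are contractible. Taking a Lebesgue number $\epsilon$ for the resulting open cover $\{U_i\cap X\}$ of the compact set $\bar X\cap X=\bar X$, and the same $\lambda$ as above, we get $c=\lambda\epsilon$ with the property that any closed curve in $X$ of $F_X$-length less than $c$ lies in one of the contractible pieces $U_i\cap X$ and is therefore null-homotopic in $X$. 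The only genuinely delicate verification is the uniform positivity of $F_Y$ relative to the smooth metric on the compact set $\bar X$ (so that short hyperbolic length forces short background length); this is immediate from continuity of $F_Y$ and compactness once one knows $F_Y$ is a genuine continuous metric on $Y$, which the uniformization theorem guarantees.
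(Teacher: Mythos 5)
Your proof is correct and follows essentially the same route as the paper's: both combine the distance-decreasing property of the inclusion $X\hookrightarrow Y$ with compactness of $\overline{X}$ to bound the hyperbolic metric from below, and both use the smooth boundary to ensure that a sufficiently short closed curve lies in a subset that is contractible \emph{inside} $X$ (the paper via a collar neighbourhood homotopy-equivalent to $X$, you via half-disc local models at boundary points). The only cosmetic slip is that $\overline{X}\cap X$ equals $X$, not $\overline{X}$; the Lebesgue number should simply be taken for the cover $\{U_i\}$ of the compact set $\overline{X}$, after which your argument goes through unchanged.
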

\begin{proof}
Because $X$ is relatively compact in $Y$, we may choose a number $r>0$
such that for all $\rho<r$ and all $x\in X$ the set
$\{y\in Y:d_Y(x,y)<\rho\}$ is homeomorphic to an open ball.
(Remember, that the infinitesimal Kobayashi pseudometric is actually
a Riemannian metric for $\dim Y=1$. Hence we may use the exponential
map for this argument.)

Because $X$ is furthermore smoothly bounded, there is a number $s>0$
such  that $W=\{y\in:d_Y(y,X)<s$ is homotopically equivalent to $X$.

Together, this two assertions imply that every closed curve
whose length with respect to the hyperbolic metric of $Y$
is less than $\min\{s,r\}$ must be homotopic to zero.

Since the inclusion map $X\to Y$ is distance decreasing for the
respective Kobayashi metrics, the desired assertion follows.
\end{proof}

\begin{lemma}\label{retract-to-bdry}
Let  $X$ be a hyperbolic  Riemann surface and 
let $\Omega$ be a relatively compact open
domain in $X$ with smooth boundary.
Assume $\Omega\ne X$. 
Let $\gamma:S^1\to X$ be a simple path  which can be deformed (in $X$)
to a curve of arbitrarily small hyperbolic length. 
Then $\gamma$ can be deformed to a curve $\tilde\gamma$
with $\tilde\gamma(S^1)\subset X\setminus\overline{\Omega}$.
\end{lemma}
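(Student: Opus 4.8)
The plan is to argue that a simple closed curve which bounds very little hyperbolic length must in fact bound a "thin" region — a cusp-like or collar-like neighborhood — which can be pushed off of the compact set $\overline{\Omega}$. First I would invoke Lemma \ref{rc} applied to (a slight enlargement of) $\Omega$ inside $X$: there is a constant $c>0$ such that every closed curve of hyperbolic length less than $c$ lying in this enlargement is homotopic to a constant. Since $\gamma$ can be deformed to arbitrarily small length, pick a representative $\gamma_0$ homotopic to $\gamma$ with $L(\gamma_0)<c$; if $\gamma_0$ happens to lie in the enlargement of $\Omega$, then $\gamma$ is null-homotopic in $X$, and a null-homotopic simple curve bounds an embedded disc (by the classification of surfaces / the fact that a simple closed curve on a surface is either essential or bounds a disc), so $\gamma$ can be shrunk inside that disc and in particular pushed into any prescribed open set — but we must be slightly careful, since the disc it bounds need not avoid $\overline{\Omega}$. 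So the real content is the case where $\gamma$ is essential.

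Next I would use the uniformization theorem: write $X=\Delta/\Gamma$ with $\Gamma\subset PSL_2(\R)$ a discrete subgroup acting freely, so that the hyperbolic metric on $X$ is the one descending from the Poincar\'e metric on $\Delta$. An essential simple closed curve $\gamma$ corresponds to a conjugacy class in $\Gamma$ represented by a hyperbolic element $g$ (it cannot be parabolic if it is freely homotopic to a curve of positive lower length bound, but here we are in the opposite situation — $\gamma$ deforms to small length — so $g$ is hyperbolic with very small translation length, or the curve is peripheral and $g$ is parabolic). Either way, the standard collar lemma (Margulis/collar lemma for hyperbolic surfaces) gives an embedded collar or cusp neighborhood $N$ of the geodesic/cusp in the free homotopy class of $\gamma$, whose width goes to infinity as the translation length goes to zero; and $\gamma$, having length $<c$ for $c$ small enough, is freely homotopic to its core geodesic (or cusp) and can be isotoped into $N$. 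The key point is then that $N$, being a thin collar of large width, cannot be entirely "used up" by the relatively compact set $\overline{\Omega}$: since $\overline{\Omega}$ is compact it meets only a bounded portion of $N$, so a sufficiently deep sub-collar $N'\subset N$ is disjoint from $\overline{\Omega}$, and I isotope $\gamma$ within $N$ into $N'$, obtaining $\tilde\gamma$ with $\tilde\gamma(S^1)\subset X\setminus\overline{\Omega}$.

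To make the "bounded portion" claim precise I would argue as follows: parametrize the collar $N$ by $(\rho,\theta)$ with $\rho$ the signed distance from the core, so $N=\{|\rho|<w\}$ with $w$ large; the set of $\rho$ for which the circle $\{\rho\}\times S^1$ meets $\overline{\Omega}$ is closed, and I claim it is bounded away from $\pm w$. Indeed if circles at $\rho_n\to w$ all met $\overline{\Omega}$, compactness of $\overline{\Omega}$ would force a limit point of $\overline{\Omega}$ at distance $w$ from the core along the collar, but points at distance approaching $w$ escape every compact subset of $X$ (the collar of infinite-or-large width is "going out to the boundary/cusp"), contradiction — or more carefully, I just choose $w$ in the first place large enough that the sub-collar $\{\rho > w - 1\}$, say, has closure disjoint from the compact set $\overline{\Omega}$, which is possible precisely because the collar width can be taken as large as we like while $\overline{\Omega}$ is fixed and compact. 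I expect this step — correctly setting up the collar/cusp and verifying that a deep sub-collar avoids $\overline{\Omega}$ — to be the main obstacle, together with the book-keeping needed to handle uniformly the hyperbolic-core case and the parabolic-cusp case (and the degenerate possibility that $\gamma$ is null-homotopic, handled separately above). Everything else is routine: the passage from "small length" to "isotopic into the collar" is the standard fact that short essential curves live in Margulis tubes, and the hypothesis $\Omega\ne X$ guarantees $X\setminus\overline{\Omega}$ is nonempty so the target set is not vacuous.
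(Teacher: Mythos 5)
Your route is genuinely different from the paper's. The paper never uniformizes: it shrinks the domain to $\Omega_\epsilon=\{x\in\Omega: d_X(x,\partial\Omega)>\epsilon\}$, observes that a representative of length $<\delta$ lying entirely in $\Omega_{\epsilon/2}$ would sit inside a ball contained in a contractible subset of $\Omega$ and hence be null-homotopic, so an essential short representative must meet the complement of $\Omega_{\epsilon/2}$ and therefore (having length $<\epsilon/2$) miss $\Omega_\epsilon$ entirely; a diffeomorphism of $X$ homotopic to the identity carrying $\Omega$ onto $\Omega_\epsilon$ then pulls the curve off $\overline{\Omega}$. This is purely metric and local to $\partial\Omega$, and needs no collar lemma, no thick--thin decomposition, and no case analysis on the type of the deck transformation. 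Your argument, once correct, buys a sharper geometric picture (the curve ends up in an explicit cusp neighbourhood), but at the cost of heavier machinery.

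Two points in your write-up need repair. First, the hyperbolic case: if the conjugacy class of $\gamma$ is represented by a hyperbolic element $g$, then the closed geodesic realizes the \emph{positive} infimum of lengths in the free homotopy class (any representative lifts to a path from $\tilde x$ to $g\tilde x$, of length at least the translation length of $g$), so this case is excluded by the hypothesis and you should simply say so. As written, your treatment of it is wrong: for a \emph{fixed} hyperbolic $g$ the translation length, and hence the collar width $w(\ell)$, is a fixed finite number --- it does not ``go to infinity'' and cannot ``be taken as large as we like'' --- so nothing prevents $\overline{\Omega}$ from swallowing the entire collar. Only the parabolic/cusp case survives, and there your argument is sound (the horoball $\{\mathrm{Im}\,z>T\}$ is precisely invariant, the horocycle at height $T'$ has length $1/T'$ and is freely homotopic to $\gamma$, and properness of the height function plus compactness of $\overline{\Omega}$ gives a $T$ with $C_T\cap\overline{\Omega}=\emptyset$). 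Second, you leave the null-homotopic case dangling; it is in fact trivial and your worry about the bounded disc is a red herring: a null-homotopic loop is freely homotopic to a constant map, and the constant can then be slid along a path to any point of $X\setminus\overline{\Omega}$, which is nonempty because $\Omega\ne X$ has nonempty smooth boundary. With these two fixes your proof is complete, though considerably less elementary than the paper's.
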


\begin{proof}
The assertion is trivially true if $\gamma$ is homotopic to the constant map.
Hence we may and do assume that $ \gamma$ is not homotopic to the constant map.
For $\epsilon>0$ let $\Omega_\epsilon=\{x\in \Omega: d_X(x,\partial \Omega)>\epsilon\}$.
Since $\Omega$ is relatively compact with smooth boundary, 
 for every sufficiently small $\epsilon$ there exists a diffeomorphism $\phi$ of $X$
homotopic to the identity map with $\phi(\Omega)=\Omega_\epsilon$.
Next we choose a positive number $\delta>0$, such that $B_\delta(p)=\{x\in X:d_X(x,p)<\delta\}$
is contained in a contractible subset of $\Omega$ for every $p\in\Omega_{\frac\epsilon2}$. This is possible, because 
$\Omega_{\frac\epsilon2}$ is relatively compact.
If $\gamma:S^1\to X$ is a curve of hyperbolic length $\delta$, our assumptions
imply that $\gamma$ is homotopic to a constant map or $\gamma(S^1)\not\subset \Omega_{\frac\epsilon2}$.
If its hyperbolic length is smaller than $\epsilon/2$, the condition
$\gamma(S^1)\not\subset \Omega_{\frac\epsilon2}$ implies $\gamma(S^1)\cap \Omega_\epsilon=\{\}$.
Thus every curve which can be deformed to a curve of arbitrarily small hyperbolic length is
homotopic to a curve $\gamma$ whose image is contained in $X\setminus\Omega_\epsilon$.
But this in turn implies that $\tilde\gamma=\phi^{-1}\circ\gamma$ is homotopic to $\gamma$ with
$\tilde\gamma(S^1)\subset X\setminus\Omega$.
\end{proof}

\begin{lemma}\label{fin-q-inj}
Let $X$ be a Riemann surface with a finite subset $E$ and let $\rho:X\to X_1$ be the quotient space obtained
by collapsing $E$ to a one point.

Then:
\begin{enumerate}
\item
The natural group homomorphism 
$\rho_*:\pi_1(X)\to\pi_1(X_1)$ is injective.
\item
If $\gamma:[0,1]\to X$ is a continuous path in $X$ such that $\gamma(0)$ and
$\gamma(1)$ are {\em different} points in $E$,
then
$\rho\circ\gamma$ defines a non-trivial element in $\pi_1(X_1)$.
\end{enumerate}
\end{lemma}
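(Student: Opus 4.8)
The plan is to prove both assertions by analysing how attaching a one-complex to a space affects its fundamental group, using van Kampen's theorem together with a slight thickening of the collapsing map. First I would replace the set-theoretic quotient $\rho:X\to X_1$ by a homotopy-equivalent model that is easier to handle. Choose pairwise disjoint open discs $D_1,\dots,D_n$ around the points $p_1,\dots,p_n$ of $E$, each biholomorphic to $\Delta$. The space $X_1$ obtained by collapsing $E$ is homotopy equivalent to the space $X'$ obtained from $X$ by gluing a cone on the finite discrete set $\{p_1,\dots,p_n\}$ — equivalently, by attaching to $X$ a wedge of $n-1$ arcs joining the $p_i$ to a new base vertex $*$ — since collapsing a contractible (indeed, discrete-plus-cone) subcomplex is a homotopy equivalence and a wedge of arcs deformation retracts onto a point exactly as the cone does. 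So it suffices to understand $\pi_1(X')$.

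Next I would apply van Kampen to the decomposition $X' = X \cup T$, where $T$ is an open neighbourhood of the attached tree (the wedge of arcs together with small collar pieces of the $D_i$), chosen so that $T$ is contractible and $X\cap T$ deformation retracts onto the disjoint union $\coprod_i (D_i\setminus\{p_i\})\simeq \coprod_i S^1$, one loop $\ell_i$ around each $p_i$. Van Kampen then presents $\pi_1(X')$ as the quotient of the free product $\pi_1(X) * \pi_1(T) = \pi_1(X)*\{1\}=\pi_1(X)$ by the normal subgroup generated by the relations identifying, for each $i$, the image of $\ell_i$ coming from $X$ with its image coming from $T$. But in the contractible space $T$ every $\ell_i$ is nullhomotopic, so the relations simply kill each $\ell_i$: we obtain $\pi_1(X_1)\cong \pi_1(X)/N$, where $N$ is the normal closure of the classes of the small loops $\ell_1,\dots,\ell_n$ around the points of $E$.

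For (1), I would then argue that $N$ is trivial, i.e.\ that $\rho_*$ is injective, by exhibiting a left inverse up to homotopy rather than by a group-theoretic computation: the map collapsing $E$ is a homotopy equivalence onto its image after the above replacement only in the sense that it does not \emph{create} relations — more precisely, each loop $\ell_i$ bounds the disc $D_i$ already \emph{inside $X$}, hence is already trivial in $\pi_1(X)$, so $N=\{1\}$ and $\rho_*$ is an isomorphism when $X$ is connected (and the obvious modification handles the passage from a possibly disconnected $X$). This last observation — that the killed loops were null to begin with — is the crux, and it is exactly where the hypothesis that $E$ consists of \emph{interior} points of a \emph{surface} enters. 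For (2), given a path $\gamma$ from $p_i$ to $p_j$ with $i\ne j$, its image $\rho\circ\gamma$ is a loop at the image point; I would perturb $\gamma$ rel endpoints to meet $E$ only at its endpoints, so that $\rho\circ\gamma$ becomes, in the model $X'$, the concatenation of the interior arc $\gamma$ with the two attached arcs $p_i*$ and $*p_j$. Under the retraction $X'\to X$ (pinching the tree back), or rather under an explicit homomorphism $\pi_1(X_1)\to F_{n-1}$ recording winding around the cone vertex — equivalently, abelianising $T$'s contribution before the collapse — this loop maps to a generator of the free group on the arcs, hence is nontrivial; the point is that $\gamma$ traverses the $*$-vertex an odd number of times along the $i$-th and $j$-th rays, so it cannot be expressed using only the loops coming from $X$, which never visit $*$. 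The main obstacle is packaging these homotopy-theoretic facts cleanly: one must be careful that ``collapsing a finite set to a point'' on a non-compact, possibly non-locally-finite surface still yields a space for which van Kampen applies, which is why I would work throughout with the honest CW/manifold model $X'$ rather than the quotient $X_1$ directly, invoking that the two are homotopy equivalent.
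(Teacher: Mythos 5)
Your overall strategy (replace $X_1$ by the homotopy-equivalent model $X'=X\cup\mathrm{Cone}(E)$ and apply van Kampen) is viable, but the van Kampen computation as you set it up is wrong, and it leads you to a false conclusion. The arcs of the cone are attached to $X$ along the \emph{points} $p_i$ themselves, so any open set $T$ covering the attached tree necessarily contains full (unpunctured) neighbourhoods of the $p_i$ in $X$; the intersection $X\cap T$ is therefore a disjoint union of $n$ \emph{contractible} pieces (discs with whiskers), not a disjoint union of circles. The presentation $\pi_1(X_1)\cong\pi_1(X)/N$ with $N$ generated by the loops $\ell_i$ is the computation for a different operation (coning off the boundary circles $\partial D_i$, i.e.\ attaching $2$-cells), and your resulting claim that $\rho_*$ is an \emph{isomorphism} for connected $X$ is false --- it contradicts part (2) of the very lemma you are proving: for $X=\C$ and $E=\{0,1\}$ one has $X_1\simeq S^1$, so $\pi_1(X_1)\cong\Z$ while $\pi_1(X)=\{1\}$. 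The correct outcome of van Kampen here (applied to $U=X'\setminus\{*\}\simeq X$ and $V$ a contractible star of the cone point, with $U\cap V$ having $n$ simply connected components, so one must use the version for disconnected intersections) is
\[
\pi_1(X_1)\;\cong\;\pi_1(X)\ast F_{n-1},
\]
with $\rho_*$ the inclusion of the first free factor. From this corrected statement both assertions do follow: (1) is immediate, and for (2) the loop $\rho\circ\gamma$ is, up to conjugation, a word containing the free generator associated to the pair $(p_i,p_j)$, hence maps to a nontrivial element under the projection onto $F_{n-1}$; your ``odd number of traversals'' heuristic should be replaced by this projection. Your observation that the $\ell_i$ bound discs in $X$ is a red herring and plays no role.

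For comparison, the paper's proof of (1) is entirely different and avoids van Kampen: it reduces by induction to $|E|=2$, builds the space $Y$ obtained from $X\times\Z$ by gluing $(b,n)\sim(a,n+1)$, observes that $Y\to X_1$ is an unramified $\Z$-covering (hence injective on $\pi_1$), and that the inclusion $X\cong X\times\{0\}\hookrightarrow Y$ admits a retraction (hence is also injective on $\pi_1$). That route gives less information than the free-product decomposition but requires no care about disconnected van Kampen intersections; part (2) then follows from the covering $Y\to X_1$ because the lift of $\rho\circ\gamma$ starting at $[(a,0)]$ ends at $[(b,0)]=[(a,1)]$, a different point. Either approach can be made to work, but yours needs the intersection and the resulting presentation repaired as above.
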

\begin{proof}
We prove the first statement for possibly singular Riemann surfaces. Then by induction it suffices
to consider the case where $E$ consists of two elements, say $E=\{a,b\}$. Define $Y_0=X\times \Z$
and let $Y$ denote the quotient obtained by identifying $(b,n)$ with $(a,n+1)$ for each $n\in\N$.
Now we have a free and properly discontinuous $\Z$-action on $Y$ induced by $m:(x,n)\mapsto (x,n+m)$.
The natural projection $\pi:Y\to X_1$ is an unramified Galois covering for this $\Z$-action. Therefore
$\pi_1(Y)\to\pi_1(X_1)$ is injective. We can embedd $X$ into $Y$ via $i:x\mapsto[(x,0)]$. Then the
natural projection $\rho$ from $X$ to $X_1$ is given as $\rho=\pi\circ i$. Now $\tau:Y\to X$ induced by the
map $\tau_0\to X$ defined as
\[
\tau_0: (x,n)\mapsto\begin{cases} a & \text{ if $n<0$ }\\
x & \text{ if $n=0$ }\\
b & \text{ if $n>0$} \\
\end{cases}
\]
has the property $id_X=\tau\circ i$. It follows that $i_*:\pi_1(X)\to\pi_1(Y)$ is injective.
Hence $\rho_*=(\pi\circ i)_*:\pi_1(X)\to\pi_1(X_1)$ is injective as well.
\end{proof}

\begin{proposition}\label{genus-1}
Let $\Omega\subset X$ be a smoothly bounded relatively compact domain in a Riemann surface
and let $\Omega\subset\overline{\Omega}$ be a standard compactification (in the sense of definition~\ref{std-cpt}).

Assume that the genus of $\overline{\Omega}$ is at least one.

Then there exists a simple curve $\gamma:S^1\to X$ and a number $c>0$ such that the
hyperbolic length of every curve homotopic to $\gamma$ is at least $c$.
\end{proposition}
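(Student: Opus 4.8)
The plan is to extract the curve $\gamma$ from the hypothesis on the genus and then to rule out that it can be contracted, by combining Lemma~\ref{retract-to-bdry} with an intersection-number argument. Throughout I assume $X$ is hyperbolic; this is the only case in which the conclusion can hold and the only one in which the proposition gets applied, since for a non-hyperbolic Riemann surface the Kobayashi pseudometric, hence every hyperbolic length, is identically zero. Since $\overline{\Omega}$ is obtained from $\Omega$ by attaching finitely many discs and points (the components of $\overline{\Omega}\setminus\Omega$), the surface $\Omega$ itself has genus $g\ge1$. Consequently $\Omega$ contains two embedded simple closed curves $\gamma$ and $\beta$ meeting transversally in exactly one point --- for instance a standard symplectic pair inside a one-holed torus subsurface of $\Omega$ --- so that the algebraic intersection number $\gamma\cdot\beta$, being the local signed count of these points, equals $\pm1$ whether computed in $\Omega$ or in $X$. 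I claim this $\gamma$ has the asserted property.

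Suppose not, i.e.\ suppose $\gamma$ can be deformed in $X$ to curves of arbitrarily small hyperbolic length. If $\Omega=X$, then $X$ is compact, and since $X$ is hyperbolic the Poincar\'e metric is a genuine Riemannian metric on this compact surface, so there is a positive lower bound for the lengths of all non-contractible loops; as $\gamma\cdot\beta=\pm1$ shows $\gamma$ is essential, the claim is immediate. So assume $\Omega\ne X$. Then Lemma~\ref{retract-to-bdry}, applied to $\Omega$ and $\gamma$, yields a curve $\tilde\gamma$ freely homotopic to $\gamma$ in $X$ with $\tilde\gamma(S^1)\subset X\setminus\overline{\Omega}$, where $\overline{\Omega}$ now denotes the closure of $\Omega$ in $X$. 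Since $\beta(S^1)\subset\Omega$, the curves $\tilde\gamma$ and $\beta$ are disjoint, so $\tilde\gamma\cdot\beta=0$; but a free homotopy preserves the class in $H_1(X;\Z)$, hence the algebraic intersection number against the fixed compact $1$-cycle $\beta$, so $0=\tilde\gamma\cdot\beta=\gamma\cdot\beta=\pm1$, which is absurd. Therefore $\gamma$ cannot be contracted to arbitrarily small length, the infimum $c$ of the hyperbolic lengths over the free homotopy class of $\gamma$ is strictly positive, and by the definition of the infimum every curve homotopic to $\gamma$ has length at least $c$, as required.

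I expect two points to require the most care. The first is that the pair $\gamma,\beta$ can be taken inside $\Omega$, not merely inside the standard compactification $\overline{\Omega}$: the genus generators a priori live in $\overline{\Omega}$, but each arc that runs into one of the finitely many capping discs or points may be pushed out across its boundary using simple connectivity, and only the algebraic --- not the geometric --- intersection number has to survive this isotopy, so this step is essentially routine. The second, and the real crux, is the homotopy invariance of $\gamma\cdot\beta$ on the possibly non-compact surface $X$; this is standard precisely because $\beta$ is a compact cycle, but it is the fact that makes the intersection-number argument legitimate. Routing the proof through Lemma~\ref{retract-to-bdry} is exactly what allows it to avoid any serious input from hyperbolic geometry.
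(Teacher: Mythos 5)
Your proof is correct, and it replaces the paper's key topological step by a genuinely different one. Both arguments funnel through Lemma~\ref{retract-to-bdry} in the same way: one must show that $\gamma$ cannot be pushed off $\overline{\Omega}$ (closure in $X$), and then the contrapositive of that lemma gives the positive lower bound. The paper establishes the non-retractability at the level of fundamental groups: it picks any simple curve in $\Omega$ that is essential in the compactification, passes to the one-point compactification $\Omega'$ of $\Omega$, invokes the injectivity of $\pi_1(\Omega)\to\pi_1(\Omega')$ from Lemma~\ref{fin-q-inj} (proved by a covering-space construction), and uses the collapsing map $X\to\Omega'$ to see that a curve essential in $\Omega'$ cannot be homotoped into $X\setminus\overline{\Omega}$. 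You instead exploit the genus hypothesis homologically: a dual pair $\gamma,\beta$ in a one-holed torus inside $\Omega$ has $\gamma\cdot\beta=\pm1$, and the homotopy invariance of the intersection pairing against the compact cycle $\beta$ forbids $\gamma$ from becoming disjoint from $\beta$. Your route is more elementary (no covering-space lemma needed) and you are more careful on two points the paper glosses over --- the degenerate case $\Omega=X$ and the tacit hyperbolicity of $X$, without which the statement is literally false (take $X$ an elliptic curve). The price is the routine but necessary surface-topology step of locating the dual pair inside $\Omega$ rather than merely in $\overline{\Omega}$, which you correctly identify and justify. The paper's $\pi_1$ machinery has the side benefit of applying to any essential simple curve and of being reused verbatim in the genus-zero analysis (Lemma~\ref{lem-pi-0}), which is presumably why the author set it up that way.
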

\begin{proof}
By assumption  $\pi_1(\overline{\Omega})\ne\{e\}$.
Since $\overline{\Omega}\setminus\Omega$ can be contracted to a finite set, 
it is clear that
every element in $\gamma_0\in\pi_1(\overline{\Omega})\setminus\{e\}$
can be represented by a curve
with image inside $\Omega$ and that there exists a simple curve $\gamma:S^1\to\Omega$ which is not
homotopic to a constant map as a map from $S^1$ to $\overline{\Omega}$.

Now let $\Omega'$ be the one-point-compactification of $\Omega$.
Due to lemma~\ref{fin-q-inj} we know that $\gamma$ gives us a non-trivial element
in $\pi_1(\Omega')$. Since we have a natural continuous map from $X$ to $\Omega'=\Omega\cup\{\infty\}$
given by
\[
x \mapsto \begin{cases} x & \text{ if $x\in\Omega$ }\\
\infty & \text{ if $x\not\in\Omega$ }
\\
\end{cases},
\]
it follows that $\gamma$ can not be deformed in $X$ to a curve whose image is contained
in $X\setminus\bar\Omega$.
Now the assertion follows from lemma~\ref{retract-to-bdry}.
\end{proof}

\section{The case of genus $0$}

\begin{lemma}\label{lem-pi-0}
Let $\Omega_n$ be an increasing sequence of relatively compact
smoothly bounded domains in a Riemann surface $X$ such that
$\cup_n \Omega_n=X$.

Assume that every $\Omega_n$ admits a standard compactification
$\hat\Omega_n$
of genus $g=0$.

Then for every $n$ the inclusion $\partial\Omega_n\to X\setminus \Omega_n$
induces a bijective correspondance 
$\pi_0(\partial\Omega_n)\simeq \pi_0(X\setminus \Omega_n)$
between the sets of connected components.
\end{lemma}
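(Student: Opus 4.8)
The goal is to show that for each $n$ the map $\pi_0(\partial\Omega_n)\to\pi_0(X\setminus\Omega_n)$ induced by inclusion is a bijection. Surjectivity is essentially elementary: $X$ is connected, so every connected component $C$ of the open set $X\setminus\Omega_n$ (or, taking closures, of $X\setminus\overline{\Omega_n}$) must have nonempty frontier in $X$, and because $\Omega_n$ is smoothly bounded that frontier lies in $\partial\Omega_n$; hence $C$ meets some boundary circle. So the real content is injectivity: two boundary circles lying in the same component of $X\setminus\Omega_n$ must already be boundary circles of the \emph{same} component. I would prove the contrapositive via a genus count on the standard compactifications $\hat\Omega_m$ for large $m$.

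\textbf{Key steps.} First I would fix $n$ and suppose, for contradiction, that two distinct boundary circles $\alpha,\beta\subset\partial\Omega_n$ lie in one connected component $C$ of $X\setminus\overline{\Omega_n}$; choose an embedded arc $\delta$ in $C$ joining a point of $\alpha$ to a point of $\beta$. Next, since $\bigcup_m\Omega_m=X$ and the $\Omega_m$ are increasing, pick $m>n$ large enough that $\overline{\Omega_n}\cup\delta\subset\Omega_m$. Now compactify: in $\hat\Omega_m$ the complement $\hat\Omega_m\setminus\Omega_m$ is a disjoint union of points and closed discs, so $\hat\Omega_m$ deformation retracts onto $\overline{\Omega_m}$, and filling in the complementary discs of $\Omega_n$ inside $\overline{\Omega_m}$ only caps off boundary circles with discs. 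The effect on topology is standard: $\overline{\Omega_n}$ has some genus $g_n$ and some number $b\ge 2$ of boundary circles among which $\alpha$ and $\beta$ appear; but inside $\hat\Omega_m$ (genus $0$) the circles $\alpha$ and $\beta$ cobound, via a neighbourhood of $\delta$, a tube that glues two different boundary circles of $\overline{\Omega_n}$ together. Gluing two distinct boundary circles of a surface-with-boundary by a tube raises the genus by $1$. Hence the closed surface obtained from $\overline{\Omega_n}$ by capping all boundary circles is a \emph{subsurface} — up to the further genus-$0$ handle attachments coming from the remaining complementary pieces in $\hat\Omega_m$ — of $\hat\Omega_m$ whose genus is at least $g_n+1\ge 1$, contradicting $g(\hat\Omega_m)=0$.

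\textbf{Making the genus bookkeeping precise.} The cleanest way to run the last step is with Euler characteristics and first Betti numbers rather than pictures. Let $S$ be the closed surface obtained from $\hat\Omega_m$ by the retraction onto $\overline{\Omega_m}$ followed by capping; it is a connected closed orientable surface of genus $0$, i.e.\ $S\cong S^2$, so $H_1(S)=0$. On the other hand $S$ contains the subsurface $\overline{\Omega_n}$ together with the band $N(\delta)\subset C\subset S$ joining $\alpha$ to $\beta$; the union $\overline{\Omega_n}\cup N(\delta)$ is a compact surface-with-boundary, and since $\alpha$ and $\beta$ are distinct boundary components of $\overline{\Omega_n}$, attaching $N(\delta)$ produces a $1$-handle whose core together with an arc through $\overline{\Omega_n}$ closes up to a loop $\lambda$ that is non-separating in $\overline{\Omega_n}\cup N(\delta)$; one checks $\lambda$ survives in $H_1(S;\Q)$ because the capping discs and the genus-$0$ handle attachments of $\hat\Omega_m\setminus\Omega_m$ only kill classes supported near individual boundary circles, not a class that pairs nontrivially (by intersection number) with a cocycle dual to $\delta$. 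That forces $H_1(S;\Q)\ne 0$, contradicting $S\cong S^2$. Therefore no such pair $\alpha,\beta$ exists, the map is injective, and combined with surjectivity it is a bijection.

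\textbf{Expected main obstacle.} The delicate point is the survival of the non-separating loop $\lambda$ under all the modifications that pass from $\hat\Omega_m$ to $S$: one must be careful that capping the finitely many complementary discs of $\Omega_n$ inside $\overline{\Omega_m}$, and then absorbing the rest of $\hat\Omega_m\setminus\Omega_m$, really does not kill the homology class created by the tube along $\delta$. The right tool is an intersection-pairing argument — exhibit a properly embedded arc or a closed transversal meeting $\lambda$ exactly once and disjoint from every capping disc — together with the hypothesis that each complementary piece of each $\Omega_m$ is a point or a disc, which is precisely what keeps all the attachments genus-$0$. Everything else (surjectivity, the retraction of $\hat\Omega_m$ onto $\overline{\Omega_m}$, choosing $m$ large) is routine.
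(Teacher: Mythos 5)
Your proposal is correct in substance and shares the paper's central construction --- the loop obtained by concatenating an arc $\delta$ joining two distinct boundary circles through $X\setminus\overline{\Omega_n}$ with a return arc inside $\overline{\Omega_n}$, viewed inside $\hat\Omega_m$ for $m$ large --- but it certifies the non-triviality of that loop by a genuinely different mechanism. The paper pushes the loop forward to the one-point compactification $\Omega_n'$ and invokes Lemma~\ref{fin-q-inj}~(ii) (a covering-space argument) to see it is non-trivial in $\pi_1$, hence non-trivial in $\pi_1(\hat\Omega_m)$, contradicting simple connectivity of the genus-$0$ compactification; you instead detect it homologically. Your route is completable and arguably more elementary and self-contained (it needs no auxiliary lemma, only that a compact genus-$0$ Riemann surface is $S^2$ with $H_1=0$), whereas the paper's route recycles Lemma~\ref{fin-q-inj}, which it needs anyway for Proposition~\ref{genus-1}. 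That said, your middle ``genus bookkeeping'' paragraph --- passing to an auxiliary capped-off surface $S$ and arguing that the handle attachments ``only kill classes supported near individual boundary circles'' --- is vaguer than necessary and is where a referee would push back; the clean version is the one you yourself identify at the end: the loop $\lambda$ meets the embedded circle $\alpha\subset\Omega_m\subset\hat\Omega_m$ transversally in exactly one point (it crosses $\alpha$ only where it leaves $\overline{\Omega_n}$ along $\delta$), so the intersection pairing gives $[\lambda]\cdot[\alpha]=\pm 1$ in $H_1(\hat\Omega_m;\Z)=H_1(S^2;\Z)=0$, an immediate contradiction with no capping or retraction needed. One further small point worth making explicit: non-injectivity only directly yields a chain of components of $X\setminus\overline{\Omega_n}$ linking the two boundary circles, so you should reduce to the case of a single component $C$ adjacent to two distinct circles before choosing $\delta$; this is harmless but should be said. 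Your surjectivity remark is fine and is indeed left implicit in the paper.
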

\begin{proof}
We have to show: {\em If $p,q$ are points in two distinct connected
components of $\partial\Omega_n$, then they can not be connected
by a path inside $X\setminus\Omega_n$}. We assume the contrary.
Then $p,q$ can be connected by a path inside $\Omega_m\setminus\Omega_n$
for some sufficiently large $m>n$.
We concatenate this path with a path connecting $q$ and $p$
inside $\overline{\Omega_n}$ and obtain an element
$\gamma\in\pi_1(\Omega_m)$. Due to lemma~\ref{fin-q-inj} $(ii)$
$\gamma$ projects onto a non-trivial element in 
$\pi_1(\Omega_n')$ where $\Omega_n'$ denotes the one-point-%
compactification of $\Omega_n$.
Since the identity map of $\Omega_n$ extends in an obvious way
to a continuous map from $\hat\Omega_m$ to $\Omega_n'=\Omega_n\cup
\{\infty\}$ by mapping every point in $\hat\Omega_m\setminus\Omega_n$
to $\infty$, it follows that $\gamma$ defines a closed curve in
$\hat\Omega_m$ which is not homotopic to a constant map.
But this contradicts our assumption that each $\hat\Omega_m$
has genus zero.
\end{proof}

\begin{corollary}\label{deform-to-bdry}
Under the above assumptions, let $\gamma$ be a closed curve in $\Omega_n$
which can be deformed into a closed curve in $X\setminus\Omega_n$.

Then $\gamma$ can be deformed to a closed curve in $\partial\Omega_n$.
\end{corollary}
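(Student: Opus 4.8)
The plan is to reformulate the claim as a conjugacy statement in $\pi_1(X)$ and then exploit the tree structure that the genus‑$0$ hypothesis forces on $\pi_1(X)$ via Lemma~\ref{lem-pi-0}. First I would record the topology. By Lemma~\ref{lem-pi-0}, $X\setminus\Omega_n$ has only finitely many components $C_1,\dots,C_k$, and $\partial\Omega_n=\ell_1\sqcup\dots\sqcup\ell_k$, where $\ell_i$ is the unique boundary circle of $\Omega_n$ lying in $C_i$. Since $\hat\Omega_n$ has genus $0$, the compact surface $\overline{\Omega_n}$ is planar with boundary circles $\ell_1,\dots,\ell_k$; each $C_i$ is a connected (possibly infinite‑type) surface whose only boundary circle is $\ell_i$; and $X=\overline{\Omega_n}\cup C_1\cup\dots\cup C_k$ with $\overline{\Omega_n}\cap C_i=\ell_i$ and $C_i\cap C_{i'}=\emptyset$ for $i\ne i'$. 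The curve $\gamma'$ to which $\gamma$ is deformed has connected image, hence lies in a single $C_j$. It then suffices to show that $[\gamma]\in\pi_1(X)$ is conjugate to a power of some $[\ell_i]$: then $\gamma$ is freely homotopic in $X$ to the loop traversing $\ell_i\subset\partial\Omega_n$ the appropriate number of times (for the zeroth power, a constant map at a point of $\partial\Omega_n$).

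Next I would dispose of degeneracies and set up the group theory. If $C_j$ is homeomorphic to a closed disc, then $\gamma'$, hence $\gamma$, is null‑homotopic in $X$ and we are done. Otherwise I enlarge $\Omega_n$ to $\Omega'=\Omega_n\cup\bigcup\{C_i:C_i\text{ is homeomorphic to a closed disc}\}$; one checks that this is again a smoothly bounded relatively compact domain, that $\hat{\Omega'}$ still has genus $0$, that $\gamma\subset\Omega'$, that $\partial\Omega'\subset\partial\Omega_n$, and that $C_j$ is still a complementary component, so the hypotheses persist and it is enough to argue for $\Omega'$. After renaming I may assume that no $C_i$ is a disc; and if $\overline{\Omega_n}$ is itself a disc (the case $k=1$) then $[\gamma]=1$ and we are done, so I may assume $k\ge2$. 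Then $[\ell_i]$ has infinite order in each of $\pi_1(C_i)$ and $\pi_1(\overline{\Omega_n})$, and van Kampen exhibits $\pi_1(X)$ as the fundamental group of a graph of groups: the underlying graph is a star with central vertex carrying $\pi_1(\overline{\Omega_n})$ and leaves carrying the $\pi_1(C_i)$, the edge group of the $i$‑th edge is $\pi_1(\ell_i)\cong\Z$, and all edge homomorphisms are injective.

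Now the key step, carried out via the action of $\pi_1(X)$ on the associated Bass--Serre tree $T$, where the vertex group at a suitable vertex $u$ over the central vertex is $\pi_1(\overline{\Omega_n})$. Since $\gamma$ lies in $\overline{\Omega_n}$, the element $[\gamma]$ fixes $u$; since $\gamma$ is freely homotopic in $X$ to $\gamma'\subset C_j$, $[\gamma]$ is conjugate to $[\gamma']$ and hence also fixes some vertex $w$ of $T$ over the leaf for $C_j$. As $u$ and $w$ lie over distinct vertices of the quotient graph, $u\ne w$, so $[\gamma]$ fixes the nonempty geodesic $[u,w]$, in particular its first edge $e$ at $u$. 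But $e$ is a lift of some $\ell_i$, and the stabiliser of any edge at $u$ lying over $\ell_i$ is a $\pi_1(\overline{\Omega_n})$‑conjugate of $\langle[\ell_i]\rangle$; hence $[\gamma]=h[\ell_i]^m h^{-1}$ in $\pi_1(\overline{\Omega_n})$ for some $h\in\pi_1(\overline{\Omega_n})$ and some $m\in\Z$. Therefore $\gamma$ is freely homotopic inside $\overline{\Omega_n}\subset X$ to the closed curve winding $m$ times around $\ell_i\subset\partial\Omega_n$, which is the desired deformation.

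The real obstacle is exactly this last step---ruling out that, once pushed off $\Omega_n$, the class $[\gamma]$ could be something more complicated than a multiple of a single boundary circle; merely knowing that $[\gamma]$ dies after capping $C_j$ with a disc is not enough, since that only places $[\gamma]$ in a normal closure, not in a conjugate of a cyclic boundary subgroup. The genus‑$0$ hypothesis enters only through Lemma~\ref{lem-pi-0}: it is what makes the decomposition of $X$ star‑shaped (every complementary piece attached to $\overline{\Omega_n}$ along a single circle) and hence produces a tree, without which the statement fails (witness a regular neighbourhood of a nonseparating curve on a surface of higher genus). I expect the disc‑component bookkeeping and the reduction to $k\ge2$ to be routine; alternatively one can replace the Bass--Serre language by making a homotopy realising $\gamma\simeq\gamma'$ transverse to $\partial\Omega_n$ and running an innermost‑circle surgery on the preimage curves, which is essentially the same argument in geometric dress.
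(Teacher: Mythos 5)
Your proof is correct and follows essentially the same route as the paper: Lemma~\ref{lem-pi-0} guarantees each complementary component meets $\overline{\Omega_n}$ in a single boundary circle, Seifert--van Kampen then exhibits $\pi_1(X)$ as an amalgam along those circles, and the conclusion is the standard fact that an element conjugate into both a vertex group containing $\gamma$ and one containing its pushed-off copy must be conjugate into an edge group. The paper compresses this last step into a one-line ``it follows'' after applying van Kampen to $\overline{\Omega_n}\cup W$; your Bass--Serre tree argument (and your care in working with the decomposition of all of $X$, where the homotopy actually lives) supplies exactly the detail being elided there.
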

\begin{proof}
Let $W$ be a connected component of $X\setminus\Omega_n$ such that
$\gamma$ can be deformed to a closed curve inside $W$.
The above proposition implies that $W\cap\bar\Omega_n$
is a connected component of $\partial\Omega_n$.
Therefore the Seifert-van-Kampen theorem may be applied to
$\bar\Omega_n\cup W$. It follows that $\gamma$ can be deformed
to a closed curve in $W\cap\bar\Omega_n\subset\partial\Omega_n$.
\end{proof}
\begin{corollary}\label{rank-2}
Under the assumptions of the proposition,
assume that every simple path in $X$ may be deformed to 
a closed curve of arbitrarily small hyperbolic length.

Then for every $n$, the natural
linear map $i_*:H_1(\Omega_n,\R)\to H_1(X,\R)$
induced by the inclusion map $\Omega_n\subset X$
has rank at most $2$.
\end{corollary}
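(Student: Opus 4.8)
The plan is to argue by contradiction: suppose that for some fixed $n$ the image of $i_*:H_1(\Omega_n,\R)\to H_1(X,\R)$ has rank at least $3$. The idea is to exploit Corollary~\ref{deform-to-bdry} together with the genus-zero hypothesis to produce a simple closed curve in $X$ that, on the one hand, is forced (by assumption) to be deformable to arbitrarily small hyperbolic length, but on the other hand survives in $H_1(X,\R)$, giving a contradiction. First I would record the structural consequence of the genus-zero hypothesis: since each $\hat\Omega_m$ has genus $0$, a standard compactification of $\Omega_m$ is a sphere with finitely many disjoint closed discs removed, so $\Omega_m$ is a planar domain; in particular $H_1(\Omega_m,\R)$ has a basis consisting of the classes of the boundary curves of the removed discs, equivalently of the connected components of $\partial\Omega_m$, subject to the single relation that their sum is zero. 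Thus $H_1(\Omega_n,\R)$ is freely generated by the boundary components of $\partial\Omega_n$ modulo their total sum.

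Next I would analyze what the hypothesis ``every simple path in $X$ may be deformed to a closed curve of arbitrarily small hyperbolic length'' forces. By Lemma~\ref{retract-to-bdry}, applied with $\Omega=\Omega_n$ (which is relatively compact, smoothly bounded, and $\neq X$), every simple closed curve in $X$ can be deformed into $X\setminus\overline{\Omega_n}$. Actually the cleaner way is to apply Corollary~\ref{deform-to-bdry}: a simple closed curve $\gamma$ lying in $\Omega_n$ that can be deformed into $X\setminus\Omega_n$ can in fact be deformed into $\partial\Omega_n$. So the hypothesis says precisely that every simple closed curve in $\Omega_n$ is, after deformation inside $X$, homotopic to a curve supported on $\partial\Omega_n$. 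In homology this means $i_*H_1(\Omega_n,\R)$ is spanned by the images in $H_1(X,\R)$ of the (finitely many) connected components of $\partial\Omega_n$ — provided one can realize enough homology classes of $\Omega_n$ by \emph{simple} closed curves. Over $\R$, using a symplectic/intersection-form argument on the planar surface $\Omega_n$ (or just the fact that each generator is itself a simple boundary curve), the whole of $i_*H_1(\Omega_n,\R)$ is the $\R$-span of the classes $[\partial_1\Omega_n],\dots,[\partial_k\Omega_n]$ in $H_1(X,\R)$.

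The final step is the counting. In $X$ one has the relation $\sum_j [\partial_j\Omega_n] = 0$ already in $H_1(\Omega_n,\R)$ — it is the boundary of the planar region — so that relation certainly persists in $H_1(X,\R)$. Hence $i_*H_1(\Omega_n,\R)$ is spanned by $[\partial_1\Omega_n],\dots,[\partial_k\Omega_n]$ with at least one relation, giving rank at most $k-1$. This by itself is not enough; the real input is that inside $X$ there are \emph{further} relations. Here I would use Lemma~\ref{lem-pi-0}: the inclusion $\partial\Omega_n\hookrightarrow X\setminus\Omega_n$ is a bijection on $\pi_0$, so each boundary component $\partial_j\Omega_n$ bounds ``on the far side'' a single connected piece $W_j$ of $X\setminus\Omega_n$, and $X$ is the union of the planar region $\Omega_n$ with the $W_j$ glued along circles. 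Running the Mayer–Vietoris sequence for $X = \overline{\Omega_n}\cup(\bigcup_j W_j)$, the classes $[\partial_j\Omega_n]$ that are non-zero in $H_1(X,\R)$ must pair up via the $W_j$; combined with the single planar relation in $\Omega_n$, a short linear-algebra computation shows the span has dimension at most $2$. I expect the main obstacle to be exactly this last bookkeeping: making precise, via Mayer–Vietoris and the $\pi_0$-bijection, that the only surviving homology of $X$ coming from $\Omega_n$ is carried by at most two independent boundary circles — one must handle the cases where some $W_j$ are themselves homologically trivial in $X$ and where the planar relation interacts with the gluing, and be careful that ``rank of $i_*$'' refers to the image, not the domain.
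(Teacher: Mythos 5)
Your setup (planarity of $\Omega_n$, the generators $[\gamma_s]$ with the single relation $\sum_s[\gamma_s]=0$, and the use of Lemma~\ref{retract-to-bdry} together with Corollary~\ref{deform-to-bdry}) matches the paper, but the final step is where the argument breaks, and not merely for bookkeeping reasons. You propose to derive the bound ``rank $\le 2$'' from \emph{further relations} obtained by Mayer--Vietoris applied to $X=\overline{\Omega_n}\cup\bigcup_j W_j$ and the $\pi_0$-bijection of Lemma~\ref{lem-pi-0}. No such relations exist in general: take $X=\C\setminus\{0,1,2,3\}$ with an exhaustion by planar domains $\Omega_n$. All the topological input you invoke is present (genus-zero compactifications, the $\pi_0$-bijection, the decomposition into $\overline{\Omega_n}$ and the pieces $W_j$), yet $i_*:H_1(\Omega_n,\R)\to H_1(X,\R)$ has rank $4$ for large $n$. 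This $X$ simply fails the shrinkability hypothesis --- which shows that the hypothesis must enter at exactly the point where you try to get by with topology alone. Relatedly, your intermediate homological conclusion (``$i_*H_1(\Omega_n,\R)$ is spanned by the images of the boundary components'') is vacuous, since $H_1(\Omega_n,\R)$ is already spanned by those classes.

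The missing idea is to apply the shrinkability hypothesis to one well-chosen \emph{simple} curve and to keep track of connectedness of the boundary. Because $\Omega_n$ is planar, the class $[\gamma_1]+[\gamma_2]$ (the sum of two boundary classes) is represented by a simple closed curve $\gamma$ in $\Omega_n$. By hypothesis $\gamma$ shrinks, so by Lemma~\ref{retract-to-bdry} it deforms into $X\setminus\overline{\Omega_n}$, and by Corollary~\ref{deform-to-bdry} it deforms to a closed curve in $\partial\Omega_n$ --- hence, since a closed curve has connected image, into a \emph{single} circle of $\partial\Omega_n$. Therefore $[\gamma_1]+[\gamma_2]=m[\gamma_i]$ in $H_1(X,\Z)$ for some single index $i$ and some $m\in\Z$. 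If the image of $i_*$ had rank at least $3$, then at least four of the classes $[\gamma_s]$ survive in $H_1(X,\R)$ subject only to the relation $\sum_s[\gamma_s]=0$ (this is the content of the paper's analysis of the set $I_0$ of persistent ends), and the identity $[\gamma_1]+[\gamma_2]=m[\gamma_i]$, which involves at most three indices, is then impossible. This is the paper's argument; the contradiction comes from a single simple curve meeting the hypothesis, not from a Mayer--Vietoris computation.
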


\begin{proof}
Let $r$ be the cardinality of the set of
connected components of $\partial\Omega_n$.
Then $\Omega_n$ is homeomorphic to
$\P_1\setminus\{1,2,\ldots,r\}$.
Let $\gamma_s$ for $s\in I=\{1,\ldots,r\}$ denote a small cycle around $s$.
Then $H_1(\Omega_n,\Z)$ is the $\Z$-module generated by the $\gamma_s$
subject to the relation $\sum_s\gamma_s=0$.
Since every permutation of $\{1,2,\ldots,r\}$ extends to a
homeomorphism of $\P_1$, it is clearly that for every
subset $B\subset\{1,\ldots,r\}$ the homology class
$\sum_{s\in B}[\gamma_s]\in H_1(\Omega_n,\Z)$
can be realized by a simple curve in $\Omega_n$.

Next, consider injections $\Omega_n\subset\Omega_m$ for $m>n$.
To each $s\in I$ there corresponds a connected component
$a_s\in \pi_0(\partial \Omega_n)\simeq\pi_0(X\setminus\Omega_n)$.
(for the latter equivalence see lemma~\ref{lem-pi-0}).

Given an index $s\in I$, the cycle $\gamma_s$ maps to a non-zero
homology class in $H_1(\Omega_m,\Z)$ if and only if
there is a connected component of $\partial\Omega_m$ included
in the connected component of $X\setminus\Omega_n$ corresponding to $s$.

Now let $I_0$ denote the set of all $s\in I$ such that for all
$m>n$ there is a connected component of $X\setminus\Omega_m$
included in the connected component of $X\setminus\Omega_n$
corresponding to $s$.

Then the induced map $i_*:H_1(\Omega_n;\Z)\to H_1(X,;\Z)$
maps $\gamma_s$ to zero if and only if $s\not\in I_0$
and moreover for the elements $s\in I_0$ the homology classes of
$\gamma_s$ in $H_1(X,\Z)$ are non-zero and subject only to the
relation $\sum_{s\in I}[\gamma_s]=0$.
As a consequence, if $I_0$ contains at least four elements,
say $1,2,3,4$, we may take two of them, say $1,2$ and there exists
a simple path $\gamma\in \Omega_n$ such that the homology class
of $[\gamma]$ equals $[\gamma_1]+[\gamma_2]$.

Thus in this case there exists a simple path in $\Omega_n$
which in $X$ is not homologous to any multiple of one of 
the $\gamma_i$ ($i\in I$), i.e., not homologous to any
closed curve inside
$\partial\Omega_n$.

Then by corollary~\ref{deform-to-bdry}, 
we found a simple path in $X$ which can not be deformed
to a curve of arbitrarily small hyperbolic length in $X$.

\end{proof}

\begin{theorem}\label{8-2}
Let $X$ be a hyperbolic
Riemann surface. Assume that at least one of the following
conditions is fulfilled:
\begin{enumerate}
\item
There exists a smoothly bounded relative compact domain $\Omega$
whose standard compactification has genus at least $1$.
\item
$b_1=\dim H_1(X,\R)\ge 3$.
\item
There exists an open embedding of $X$ as a relatively compact domain
into an other hyperbolic Riemann
surface .
\end{enumerate}

Then there exists a simple path which can not be deformed to
a path of arbitrarily small hyperbolic length.
\end{theorem}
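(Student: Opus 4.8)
The plan is to argue by contradiction in each of the three cases: suppose every simple path in $X$ can be deformed, inside $X$, to a closed curve of arbitrarily small hyperbolic length, and show this is incompatible with each of (1), (2), (3) in turn. I will use two routine facts. First, a relatively compact smoothly bounded domain $\Omega$ in a Riemann surface has finitely generated fundamental group (its closure is a compact surface with boundary), hence is of finite type and admits a standard compactification. Second, $X$ can be exhausted by an increasing sequence $\Omega_1\subset\Omega_2\subset\cdots$ of such domains with $\bigcup_n\Omega_n=X$, and for such an exhaustion singular homology passes to the limit, $H_1(X,\R)=\varinjlim_n H_1(\Omega_n,\R)$. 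Case (1) then needs nothing new: if $\Omega\subset X$ is smoothly bounded, relatively compact, with standard compactification of genus $\ge 1$, Proposition~\ref{genus-1} produces a simple curve $\gamma:S^1\to X$ and a constant $c>0$ with $L(\eta)\ge c$ for every $\eta$ freely homotopic to $\gamma$, contradicting the contrary assumption.

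For case (2) I would fix an exhaustion $(\Omega_n)$ as above. If some $\Omega_n$ has a standard compactification of genus $\ge 1$, then hypothesis (1) holds with $\Omega=\Omega_n$ and we are reduced to the previous case. Otherwise every $\Omega_n$ has a genus-$0$ standard compactification, so the hypotheses of Lemma~\ref{lem-pi-0} are satisfied; combined with the standing (contradiction) assumption, Corollary~\ref{rank-2} gives $\rank\bigl(i_*\colon H_1(\Omega_n,\R)\to H_1(X,\R)\bigr)\le 2$ for every $n$. Since any finite linearly independent family in $H_1(X,\R)$ already lies in the image of some $H_1(\Omega_n,\R)$, this forces $\dim H_1(X,\R)=\sup_n\rank(i_*)\le 2$, contradicting $b_1\ge 3$.

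In case (3), let $j:X\hookrightarrow Y$ realize $X$ as a relatively compact domain in a hyperbolic Riemann surface $Y$. By Lemma~\ref{rc} there is $c>0$ such that every closed curve in $X$ of hyperbolic length $<c$ is null-homotopic; consequently any essential simple closed curve in $X$ stays essential under free homotopy and hence cannot be deformed to one of length below $c$, which proves the claim. Such a curve exists whenever $X$ is not simply connected, and a simply connected hyperbolic Riemann surface is biholomorphic to $\Delta$. The case $X\cong\Delta$ is the only subtlety: the stated conclusion actually fails for $\Delta$, but $\Delta$ is handled directly by the explicit deformations of Proposition~\ref{case-special}, so one reads (3) with the proviso $X\not\cong\Delta$, and then finishes.

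I expect the real content here to be organisational rather than technical: the substantive arguments are already packaged in Proposition~\ref{genus-1} and Corollary~\ref{rank-2}. The points that still deserve a careful word are the existence of a smooth finite-type exhaustion of an arbitrary second countable Riemann surface, the commutation of $H_1$ with such an exhaustion, and the need to isolate the $\Delta$ exception in part (3).
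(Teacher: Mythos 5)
Your proof follows the paper's own argument almost exactly: case (1) is a direct citation of Proposition~\ref{genus-1}, case (2) is the same exhaustion argument combined with $H_1(X,\R)=\varinjlim H_1(\Omega_n,\R)$ and Corollary~\ref{rank-2} (you phrase it as a contrapositive, the paper phrases it directly, with the same reduction to genus $0$ via case (1)), and case (3) is Lemma~\ref{rc}. The one substantive difference is that you correctly observe that case (3) as literally stated is problematic: Lemma~\ref{rc} only shows that short curves are null-homotopic, so one still needs an essential simple closed curve, and for $X\cong\Delta$ (which does embed relatively compactly into a hyperbolic surface, e.g.\ a smaller disc) no such curve exists and the conclusion fails --- consistent with the Remark after Corollary~\ref{arb-small}. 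The paper's one-line proof of (3) glosses over this, and your proviso (excluding the simply connected case, which is anyway covered by Proposition~\ref{case-special} for the main theorem, and is never needed where Theorem~\ref{8-2} is applied) is the right fix. A similar caveat would apply to $\Delta^*$, since hypothesis (3) does not demand a smooth boundary while Lemma~\ref{rc} does; this mismatch is in the paper, not in your argument.
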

\begin{proof}
\begin{enumerate}
\item
 see proposition~\ref{genus-1}.
\item
Let $\Omega_n$ be an increasing sequence of relatively compact smoothly
bounded domains in $X$ exhausting all of $X$.
Then $H_1(X,\R)=\lim H^1(\Omega_n,\R)$. Hence $b_1(X)\ge 3$
implies that there exists a number $n$ such that
$i_*:H_1(\Omega_n,\R)\to H_1(X,\R)$ has at least rank three.
Furthermore, thanks to $(i)$ we may now assume that all the
$\Omega_n$ have a standard compactification of genus $0$.
Thus the assertion follows from corollary~\ref{rank-2}.
\item
This is lemma~\ref{rc}.
\end{enumerate}
\end{proof}
\begin{corollary}\label{arb-small}
Let $X$ be a hyperbolic Riemann surface. Assume that every simple
curve in $X$ can be deformed to a curve of arbitrarily small
hyperbolic length.

Then $X$ is biholomorphic to one of the following:
\begin{enumerate}
\item
the unit disc $\Delta=\{z\in\C:|z|<1\}$,
\item
the punctured unit disc $\Delta^*=\{z\in\C:0<|z|<1\}$,
 \item
$\P_1\setminus\{0,1,\infty\}$,
\end{enumerate}
\end{corollary}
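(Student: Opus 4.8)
The plan is to combine Theorem \ref{8-2} with the classical structure theory of Riemann surfaces of finite type. By the contrapositive of Theorem \ref{8-2}, if every simple curve in $X$ can be deformed to a curve of arbitrarily small hyperbolic length, then: no relatively compact smoothly bounded domain in $X$ has a standard compactification of positive genus; $b_1(X)\le 2$; and $X$ admits no open embedding as a relatively compact domain into another hyperbolic Riemann surface. The first task is to translate these constraints into a short list of possible topological types, and the second is to pin down the conformal type within each topological type.

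First I would exhaust $X$ by an increasing sequence $\Omega_n$ of relatively compact smoothly bounded domains with $\bigcup_n\Omega_n=X$. By condition (1) each $\Omega_n$ has a standard compactification of genus $0$, so each $\Omega_n$ is homeomorphic to $\P_1$ minus finitely many disjoint closed discs and points, i.e.\ a planar domain. Taking the direct limit, $X$ itself is planar (of genus $0$). Condition (2), $b_1(X)\le 2$, then says $X$ has at most two "ends" contributing to homology; more precisely, since $H_1(X,\Z)=\varinjlim H_1(\Omega_n,\Z)$ and the argument in the proof of Corollary \ref{rank-2} shows that the set $I_0$ of "persistent" boundary components of $\Omega_n$ injects into a generating set of $H_1(X,\Z)$ modulo the single relation $\sum[\gamma_s]=0$, the constraint $b_1(X)\le 2$ forces $|I_0|\le 3$ for all $n$. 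Hence $X$ is homeomorphic to one of: $\P_1$ (excluded, since it is not hyperbolic; also $\Delta$ is ruled out here — wait, $\Delta$ is the disc), the disc $\Delta$, the annulus $\Delta^*\cong\R\times S^1$, or the three-punctured sphere $\P_1\setminus\{0,1,\infty\}$. (Topological types with more ends, or with a handle, are excluded by (1) and (2).)

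It remains to determine the conformal type in each case. If $X$ is simply connected and hyperbolic, the uniformization theorem gives $X\cong\Delta$ directly, giving case (1). If $X\cong\P_1\setminus\{0,1,\infty\}$ topologically, then $X$ is a hyperbolic Riemann surface of finite type whose standard compactification is $\P_1$ with three punctures, and by the classical theory the conformal structure on the thrice-punctured sphere is rigid, so $X\cong\P_1\setminus\{0,1,\infty\}$, giving case (3). The remaining — and I expect main — case is $X$ homeomorphic to an annulus: then $X$ is biholomorphic to some $A(r,s)=\{r<|z|<s\}$ with $0\le r<s\le\infty$, and up to scaling we may assume $s=1$, so $X\cong A(r,1)$ with $r\in[0,1)$. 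If $r>0$ then $A(r,1)$ embeds as a relatively compact subdomain of the larger hyperbolic annulus $A(r^{1/2},1)$ (or of $\Delta^*$), contradicting the negation of condition (3); so $r=0$ and $X\cong\Delta^*$, giving case (2). The one subtlety is that $A(0,\infty)=\C^*$ is also homeomorphic to the annulus but is \emph{not} hyperbolic, so it does not arise here; and one should note $\C$, $\C^*$ themselves are excluded at the outset by hyperbolicity, consistent with Proposition \ref{case-special} treating them separately. The main obstacle is thus the annular case: ruling out every $A(r,1)$ with $r>0$ via condition (3), and being careful that the non-Hausdorff behavior of the annulus moduli (the double point at $r=0$ distinguishing $\Delta^*$ from $\C^*$) does not cause $\Delta^*$ itself to satisfy condition (3) — it does not, since $\Delta^*$ is not relatively compact in any hyperbolic surface containing it as a proper subdomain would force, by Lemma \ref{rc}, a lower length bound, which we would need to check is genuinely absent on $\Delta^*$.
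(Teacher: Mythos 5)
Your overall strategy (negate the three conditions of Theorem~\ref{8-2}, extract the topological type, then pin down the conformal type) matches the paper's in outline, and the simply connected and annular cases are essentially fine. But there is a genuine gap in the three--ended case. You write that ``by the classical theory the conformal structure on the thrice-punctured sphere is rigid.'' That rigidity holds only in the category of algebraic curves (punctures, not general ends); it is false for open Riemann surfaces that are merely \emph{homeomorphic} to $\P_1\setminus\{0,1,\infty\}$. For instance $\Delta\setminus\{0,\tfrac12\}$, or $\Delta$ minus a point and a closed disc, or a pair of pants, are all homeomorphic to the thrice-punctured sphere, are hyperbolic of genus $0$ with $b_1=2$, and admit no relatively compact embedding into another hyperbolic surface (each has a $\Delta$-type end) --- so they survive your contrapositive of Theorem~\ref{8-2} and your topological classification, yet they are not biholomorphic to $\P_1\setminus\{0,1,\infty\}$ and must not appear in the conclusion. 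The paper closes exactly this gap by using the standard compactification: $\hat X\simeq\P_1$ and each component of $\hat X\setminus X$ is a point or a closed disc; if all components are points one gets $\P_1\setminus\{0,1,\infty\}$ by triple transitivity of $PSL_2(\C)$, while if some component is a closed disc $B$ then $\hat X\setminus B\simeq\Delta$, so $X=\Delta\setminus K$ with $K$ compact, and Lemma~\ref{subdelta} (a direct euclidean-length estimate, which your proposal never invokes) forces $K$ to have at most one point. You need this or an equivalent argument; the topological type alone does not determine the conformal type once there are two or more ends.

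Two smaller points. First, your treatment of the annulus case is a legitimately different route from the paper's (the paper again uses $X=\Delta\setminus K$ and Lemma~\ref{subdelta}, with $K$ a closed disc containing two points; you use condition (3) of Theorem~\ref{8-2}), but your specific embedding is wrong: $A(r,1)$ is \emph{not} relatively compact in $A(r^{1/2},1)$ or in $\Delta^*$, since the outer boundary circle is shared; use instead $A(r,1)\simeq A(\sqrt r,1/\sqrt r)\Subset A(r,1/r)$. Second, the closing discussion of whether $\Delta^*$ satisfies condition (3) is not needed: $\Delta^*$ is in the allowed list, so nothing has to be ruled out there.
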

\begin{proof}
By the above theorem~\ref{8-2} $(ii)$ we have $b_1(X)\le 2<+\infty$. 
Hence $X$ is of finite type and admits a ``standard compactification''
$X\hookrightarrow\hat X$. Furthermore $\hat X \simeq\P_1$ 
(Theorem~\ref{8-2} $(i)$).
By the definition of a standard compactification each connected
component of $\hat X\setminus X$ is a point or a closed disc.
There are at most three connected components of $\hat X\setminus X$
because of $b_1(X)\le 2$.
If there are only isolated points and no closed discs, then there
are three points in $\hat X\setminus X$ (otherwise $X$ would not be
hyperbolic) and $X\simeq\P_1\setminus\{0,1,\infty\}$
(because $PSL_2(\C)$ acts triply transitive on $\P_1$).
This leaves the case where one of the connected components
is a closed disc. Let $B$ be this component. Then $\hat X\setminus B$
is isomorphic to the unit disc. Thus $X$ is isomorphic to the complement
of a compact set $K$ (possibly empty) in the unit disc.
Due to lemma~\ref{subdelta} the set $K$ contains at most one point.
Therefore either $X\simeq\Delta$ or $X\simeq\Delta^*$.
\end{proof}

\begin{remark}
An explicit calculation shows that the hyperbolic length
of the curve $[0,2\pi]\ni t\mapsto re^{it}$ in $\Delta^*$
equals $\frac{2\pi}{-\log r}$.

Using this fact one verifies easily that for each of the three Riemann surface
mentioned above ($\Delta$, $\Delta^*$ and $\P_1\setminus\{(0,1,\infty\}$) it is indeed
true that every simple path can be deformed to a path of arbitrarily small
hyperbolic length.
\end{remark}

\section{Shrinking the length}
We start with an elementary observation.
\begin{lemma}
Let $\log r>\alpha>2\pi$.
Then there is a holomorphic map from the unit disc $\Delta$ to the annulus
$A(1/r,r)=\{z:1/r<|z|<r\}$ mapping the real interval $[0,\frac{2\pi}{\alpha}]$ onto
$S^1=\{z:|z|=1\}$.
\end{lemma}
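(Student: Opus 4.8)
The statement asks for a holomorphic map $f:\Delta\to A(1/r,r)$ that sends the interval $[0,2\pi/\alpha]\subset\mathbb R$ onto the unit circle $S^1$. The natural idea is to use the exponential map, which uniformizes the annulus by a strip. Concretely, the map $\zeta\mapsto e^{\zeta}$ sends the vertical strip $\{\zeta:|\mathrm{Re}\,\zeta|<\log r\}$ onto $A(1/r,r)$, and it sends the imaginary axis onto $S^1$, periodically with period $2\pi i$. So it suffices to produce a holomorphic map $g:\Delta\to\{|\mathrm{Re}\,\zeta|<\log r\}$ carrying the real segment $[0,2\pi/\alpha]$ onto a segment of the imaginary axis of imaginary length exactly $2\pi$ (so that its image wraps once around $S^1$ under the exponential). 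Then $f=\exp\circ g$ does the job.

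**Key steps.** First I would fix the target strip $S=\{\zeta:|\mathrm{Re}\,\zeta|<\log r\}$; since $\log r>\alpha>2\pi>0$ this is a genuine strip of half-width $>\pi$. Next I would write down an explicit conformal (or merely holomorphic) map from $\Delta$ into $S$ that rotates the real diameter of $\Delta$ onto the imaginary axis. The cleanest choice is to first send $\Delta$ to a half-plane or to another strip by a Möbius/logarithm combination so that the real diameter $(-1,1)$ of $\Delta$ goes to the imaginary axis, then compose with the linear map $\zeta\mapsto c\zeta$ where the real constant $c$ is chosen so that the arc-length of the image of $[0,2\pi/\alpha]$ along the imaginary axis is exactly $2\pi$; the hypothesis $\alpha>2\pi$ guarantees that the needed scaling keeps the image inside the strip of half-width $\log r$ (indeed one only needs half-width $>$ something comparable to $\alpha/2$ or so, and $\log r>\alpha$ gives ample room). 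Finally I would check that $\exp$ maps the resulting boundary segment of the imaginary axis onto all of $S^1$ (a full period) and that the image of $\Delta$ stays inside $A(1/r,r)$ because $\mathrm{Re}$ of the image stays in $(-\log r,\log r)$.

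**Main obstacle.** The only real point to be careful about is the bookkeeping of constants: one must verify that the half-width $\log r$ of the strip is large enough to accommodate a holomorphic image of $\Delta$ in which the chosen real segment has imaginary span $2\pi$. This is exactly where the hypothesis $\log r>\alpha>2\pi$ is used, and the slack between $\alpha$ and $2\pi$ (versus the slack between $\log r$ and $\alpha$) must be tracked so that the scaling factor does not push part of $\Delta$'s image out of the strip. Everything else — holomorphy, the period of $\exp$, surjectivity onto $S^1$ — is automatic. I expect the proof to be short: exhibit one explicit formula for $f$ (a composition of a Möbius transformation, a scaling, and $\exp$), then verify the two stated properties by direct inspection.
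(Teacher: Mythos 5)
Your proposal is correct and is essentially the paper's proof: the paper simply takes $f(z)=\exp(i\alpha z)$, i.e.\ the linear map $z\mapsto i\alpha z$ (no M\"obius transformation is needed) followed by $\exp$, which sends $\Delta$ into the strip $\{\zeta:|\mathrm{Re}\,\zeta|<\alpha\}\subset\{\zeta:|\mathrm{Re}\,\zeta|<\log r\}$ and sends $[0,2\pi/\alpha]$ onto $[0,2\pi i]$, a full period of $\exp$. The one point your bookkeeping leaves vague is resolved by observing that $\alpha>2\pi$ is exactly what places the interval $[0,2\pi/\alpha]$ inside $\Delta$, while $\log r>\alpha$ is what keeps the image of $\Delta$ inside the annulus.
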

\begin{proof}
Take $z\mapsto \exp(i\alpha z)$ and note that $|z|<1$ implies $|Im(z)|<1$ which in 
combination with $\log r>\alpha$ implies $1/r<|\exp(i\alpha z)|<r$.
\end{proof}
\begin{corollary}\label{length-circle}
For $r>1$ let $\mu(r)$ denote the hyperbolic length of $S^1$ as a curve in the
Riemann surface $A(1/r,r)$.

Then $\lim_{r\to\infty}\mu(r)=0$.
\end{corollary}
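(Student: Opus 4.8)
The plan is to use the Lemma immediately preceding the Corollary together with the Ahlfors--Schwarz lemma to bound $\mu(r)$ from above by a quantity tending to $0$ as $r\to\infty$. Concretely, I would proceed as follows.

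First I would record the relevant geometry of the annulus. The Kobayashi--Royden pseudometric $F_{A(1/r,r)}$ is, by the uniformization theorem, a complete K\"ahler metric of constant negative curvature; normalizing so that the Poincar\'e metric on $\Delta$ has curvature $-4$ (this is the normalization under which the Ahlfors--Schwarz constant $C$ equals $4$, or with whatever normalization is used for the statement of the Ahlfors--Schwarz lemma above), the constant $C>0$ in that theorem is a fixed number independent of $r$. Hence for \emph{every} holomorphic map $f:\Delta\to A(1/r,r)$ and every $p\in\Delta$ one has $\|Df_p\|\le 1/C$, where the target norm is taken with respect to $F_{A(1/r,r)}$.

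Next I would apply the Lemma with $\alpha=2\pi+1$ (any fixed $\alpha>2\pi$ works) to every $r$ with $\log r>\alpha$, i.e. for all sufficiently large $r$. This yields a holomorphic map $f_r:\Delta\to A(1/r,r)$ carrying the real interval $I=[0,\tfrac{2\pi}{\alpha}]$ onto the circle $S^1$. Parametrize $I$ affinely by $s\in[0,1]$, so $t(s)=\tfrac{2\pi}{\alpha}s$, and set $\gamma_r(s)=f_r(t(s))$; this is a (closed, since the endpoints both map to a point of $S^1$ --- more carefully, one reparametrizes so that the loop closes up) curve representing the free homotopy class of $S^1$ in $A(1/r,r)$. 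Its hyperbolic length is
\[
L(\gamma_r)=\int_0^1 F_{A(1/r,r)}\big(\gamma_r'(s)\big)\,ds
=\int_0^1 F_{A(1/r,r)}\big(Df_{r,t(s)}(t'(s))\big)\,ds
\le \int_0^1 \frac{1}{C}\,|t'(s)|\cdot \|\,\cdot\,\|_{\mathrm{Poin}}\,ds,
\]
where I bound the target norm of $Df$ applied to a tangent vector by $1/C$ times the Poincar\'e length of that vector on $\Delta$. Since the Poincar\'e density at points of the segment $I$ stays inside a fixed compact subinterval $[0,\tfrac{2\pi}{\alpha}]\subset(-1,1)$ of the real axis, the Poincar\'e length of $I$ is a finite constant $\ell(\alpha)$ independent of $r$, and so $\mu(r)\le L(\gamma_r)\le \ell(\alpha)/C$. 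This gives a uniform bound but not yet the limit $0$; to get the limit I would instead let $\alpha=\alpha(r)\to\infty$ as $r\to\infty$ while keeping $\log r>\alpha(r)$ (e.g. $\alpha(r)=\sqrt{\log r}$). Then the image interval $[0,\tfrac{2\pi}{\alpha(r)}]$ shrinks to $\{0\}$, its Poincar\'e length $\ell(\alpha(r))\to 0$, and hence $\mu(r)\le \ell(\alpha(r))/C\to 0$.

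The main obstacle, and the step deserving the most care, is the bookkeeping that lets one pass from the operator-norm bound of Ahlfors--Schwarz to a genuine length estimate for $\mu(r)$: one must (i) check that the curve $t\mapsto f_r(t)$ restricted to $I$, after a closing-up reparametrization, is freely homotopic to the standard generator $S^1$ of $\pi_1(A(1/r,r))$ --- this is clear because $f_r$ maps $I$ \emph{onto} $S^1$, so the loop wraps once --- and (ii) verify that $\mu(r)$, defined as the hyperbolic length of $S^1$ itself, is bounded above by the length of any curve in its free homotopy class, or alternatively redefine the comparison so that we directly estimate the length of the curve we produced; since $\mu(r)$ is defined in the corollary as ``the hyperbolic length of $S^1$ as a curve'', one should note that $S^1$ is the geodesic representative (the core circle of the annulus) and length-minimizing in its class, so $\mu(r)\le L(\gamma_r)$ holds automatically. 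Everything else is a routine estimate on the Poincar\'e density of $\Delta$ along a shrinking real segment.
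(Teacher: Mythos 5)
Your argument is correct and follows essentially the same route as the paper: apply the preceding lemma, use the fact that holomorphic maps into $A(1/r,r)$ are distance-decreasing for the hyperbolic metric to bound $\mu(r)$ by the Poincar\'e length of $[0,\tfrac{2\pi}{\alpha}]$, and let $\alpha\to\infty$ with $r$ so that this length tends to $0$. Two small simplifications: the distance-decreasing property is immediate from the definition of the Kobayashi--Royden pseudometric (no need to invoke Ahlfors--Schwarz or track the constant $C$), and since the lemma's map sends the interval \emph{onto} $S^1$ traversed once, the image curve \emph{is} $S^1$, so $\mu(r)\le L(\gamma_r)$ needs no appeal to geodesic minimality.
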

\begin{proof}
Due to the lemma $\mu(t)$ is bounded from above by the hyperbolic length of
$[0,\frac{2\pi}{\alpha}]$. 
for $t>2\pi$.
This implies the assertion.
\end{proof}

\begin{lemma}\label{shrink}
Let $\gamma$ be a simple curve in a Riemann surface $(X,J)$. Let $\epsilon>0$.

Then there 
exists a smooth family of complex structures $J_t$ ($t\in [0,1]$)
 such that
\begin{enumerate}
\item
the hyperbolic length of $\gamma$ with respect to the complex structure $J_1$ is less than $\epsilon$.
\item
$J_0$ equals the given complex structure $J$.
\item
There is a compact subset $K$ of $X$ such that $(J_t)_x=(J_s)_x$ for all $x\not\in K$, $s,t\in[0,1]$, i.e., all the complex structures $J_t$ agree
outside $K$.
\end{enumerate}
\end{lemma}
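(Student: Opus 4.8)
The plan is to construct the deformation locally near the image of $\gamma$ by inserting a large annulus, using Corollary~\ref{length-circle} to make the hyperbolic length small. First I would use that $\gamma:S^1\to X$ is a simple (injective, immersive) path, so by the tubular neighborhood theorem there is an open neighborhood $U$ of $\gamma(S^1)$ in $X$ together with a diffeomorphism $\psi:U\to S^1\times(-1,1)$ carrying $\gamma$ to $S^1\times\{0\}$. Via $\psi$ we transport the complex structure $J$ onto $S^1\times(-1,1)$; it suffices to deform this complex structure on $S^1\times(-1,1)$, keeping it fixed near the two ends $S^1\times(\pm 1)$, so that the deformed structures glue back with the unchanged $J$ on $X\setminus U$ and the support condition (3) holds with $K=\overline{U'}$ for a slightly smaller tube $U'$.

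Next I would realize $S^1\times(-1,1)$ with its complex structure as an annulus. Any complex structure on the cylinder $S^1\times(-1,1)$ makes it biholomorphic to some $A(\rho,\sigma)$, and after rescaling we may take it to be $A(1/R_0,R_0)$ for some $R_0>1$ (or a degenerate annulus, but on a relatively compact piece we stay in the nondegenerate range), with $\gamma$ isotopic to the core circle $S^1=\{|z|=1\}$. The idea is then to choose, for each parameter value, a diffeomorphism of $S^1\times(-1,1)$ onto an annulus $A(1/R,R)$ with $R$ large, equal to the identity (in suitable coordinates) near the ends so that nothing changes outside a compact set, and pull back the standard complex structure of $A(1/R,R)$. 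Concretely, pick a smooth function $t\mapsto R(t)$ with $R(0)=R_0$ and $R(1)=R_1$ where $R_1$ is large enough that Corollary~\ref{length-circle} gives $\mu(R_1)<\epsilon$; then build radial diffeomorphisms $A(1/R_0,R_0)\to A(1/R(t),R(t))$ interpolating the identity near $|z|=1/R_0$ and near $|z|=R_0$ — of the same flavor as the auxiliary maps $\phi_t,\phi_t^*$ in Section~3 — and let $J_t$ be the pullback complex structure. For $t=0$ this is $J$; for all $t$ it agrees with $J$ near the ends, hence extends by $J$ to a complex structure on all of $X$; and the family is smooth in $t$.

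Finally, for conclusion (1): with respect to $J_1$, the neighborhood $U$ of $\gamma$ is biholomorphic to $A(1/R_1,R_1)$, and $\gamma$ is a simple closed curve in this annulus freely homotopic to the core circle $S^1$. Since the inclusion $U\hookrightarrow X$ is holomorphic, the Kobayashi–Royden metric of $X$ is dominated by that of $U\cong A(1/R_1,R_1)$, so the hyperbolic length of $\gamma$ computed in $(X,J_1)$ is at most its hyperbolic length in $A(1/R_1,R_1)$. The curve $\gamma$ need not literally be the circle $S^1$, but it is freely homotopic to it in the annulus, and one can arrange within the construction (by a further isotopy of the annulus supported away from the ends, or simply by choosing $\psi$ so that $\gamma$ maps exactly to the core) that $\gamma$ is carried to $S^1$; then its length is exactly $\mu(R_1)<\epsilon$. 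I expect the main obstacle to be the bookkeeping in the gluing step — verifying that the radial interpolating diffeomorphisms can be taken to equal the identity near the ends while still being genuine diffeomorphisms onto $A(1/R(t),R(t))$ for every $t$, and checking smoothness of the resulting family $J_t$ across the seam where it meets the fixed structure $J$; the geometric input (largeness of the annulus forces small hyperbolic length) is already packaged in Corollary~\ref{length-circle}.
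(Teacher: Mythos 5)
Your proposal follows the same geometric route as the paper: identify a tubular neighbourhood of $\gamma$ with an annulus carrying $\gamma$ to the core circle, replace it by a conformally long annulus, invoke Corollary~\ref{length-circle}, and conclude with the distance-decreasing property of the holomorphic inclusion of the tube into $(X,J_1)$. Where you differ is the interpolation device, and there your formulation needs one correction: a radial diffeomorphism $A(1/R_0,R_0)\to A(1/R,R)$ with $R>R_0$ cannot literally restrict to the identity near the ends (its radial profile must tend to $1/R$ and $R$ at the two boundary circles), so what you actually need is that it be \emph{conformal} on collars of the ends --- say $z\mapsto (R_0/R)\,z$ on an inner collar and $z\mapsto (R/R_0)\,z$ on an outer collar, i.e.\ a translation in the $\log z$ coordinate --- which is exactly the condition making the pulled-back structure agree with the original one there and hence glue with $J$ on the rest of $X$. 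This in turn forces your chart $\psi$ to be a genuine uniformization of the tube (not just a diffeomorphism), so that $\gamma$ need not land on the core circle; your suggested fix of inserting a further compactly supported ambient isotopy moving $\psi\circ\gamma$ onto $S^1$ does repair this. The paper sidesteps all of this bookkeeping: it takes an \emph{arbitrary} diffeomorphism $\phi$ of the tube onto an annulus sending $\gamma$ to $S^1$, picks hermitian metrics $h$ on $X$ and $\rho$ on the annulus, sets $H_t=(1-t\chi)h+t\chi\,\phi^*\rho$ for a cutoff $\chi$ equal to $1$ on an inner tube $W$ and to $0$ near the complement of the tube, and lets $J_t$ be the complex structure determined by the conformal class of $H_t$. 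Smoothness of the family, condition (3), and the biholomorphism $(W,J_1)\cong A(1/r',r')$ are then immediate. Your approach buys an explicit diffeomorphism realizing the deformation; the paper's buys an essentially verification-free gluing.
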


\begin{proof}
We choose real constants $r>r'>1$.
Then we choose an open neighbourhood $V$ of $\gamma$ which admits a diffeomorphism $\phi$ to the annulus
$A(1/r,r)$ taking $\gamma$ to the unit circle $S^1=\{z:|z|=1\}$. 
Define $W=\phi^{-1}(A(1/r',r'))$.
Choose a smooth real function $\chi:X\to[0,1]$ such that 
\begin{enumerate}
\item
$\chi|_W\equiv 1$,
\item
$\chi$ is constant zero in some open neighbourhood of $X\setminus V$.
\end{enumerate}
Next we choose a hermitian metric $h$ on $X$ and a hermitian metric $\rho$ on $A(r,s)$.
We define $H_t=(1-t\chi)h+t\chi\phi^*\rho$. This is a Riemannian metric on $X$ which
determines a complex structure $J_t$ on $X$ with $J_0$ being the original complex
structure on $X$. By construction $(W,J_1)$ is biholomorphic to $A(1/r',r')$.
Since the choice of $r>r'>1$ was arbitrary, the value of $r'$ may be as large as desired.
Then the hyperbolic length of $\gamma$
with respect to $(W,J_1)$ becomes as small as desired
(corollary~\ref{length-circle}).
Since the injection of $(W,J_1)$ into $(X,J_1)$ is distance-decreasing,
the claimed assertion follows.
\end{proof}

\section{Some preparation}
\begin{lemma}
Let $M$ be a (real) differentiable manifold and let $f_n:M\to M$ be
a sequence of smooth self-maps $C^1$-converging to the identity map.

Let $K\subset K'$ be compact subsets of $M$. Assume that $K$ is connected
and contained in the interior of $K'$.

Then there exists a number $N$ such that the following
assertions hold for all $n\ge N$:
\begin{enumerate}
\item
The restricted map $f_n|_{K'}$ is injective.
\item
The image $f_n(K')$ contains $K$.
\end{enumerate}
\end{lemma}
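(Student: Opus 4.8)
The plan is to deduce everything from the hypothesis that $f_n \to \mathrm{id}_M$ in the $C^1$ topology, working entirely in local coordinates on a neighbourhood of the compact set $K'$. First I would fix, for each point $p$ in $K'$, a coordinate chart, and by compactness extract a finite subcover; on the (relatively compact) union of these charts the maps $f_n$ are eventually close to the identity not just in $C^0$ but in $C^1$, so in particular the Jacobian of $f_n$ in each chart is uniformly close to the identity matrix. This already gives a Lipschitz-type estimate: there is $N_1$ and a constant, say between $\tfrac12$ and $2$, such that for $n \ge N_1$ and any two points $x,y$ in a small coordinate ball, $\tfrac12 d(x,y) \le d(f_n(x), f_n(y)) \le 2 d(x,y)$, where $d$ is a fixed background Riemannian distance. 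That $f_n \to \mathrm{id}$ uniformly on $K'$ means $\sup_{x \in K'} d(f_n(x), x) \to 0$.

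For injectivity (assertion (1)): the only subtlety is that two \emph{far-apart} points of $K'$ could in principle be sent to the same image point, so the local Jacobian estimate alone is not enough. I would handle this by a standard Lebesgue-number argument. Let $\delta > 0$ be a Lebesgue number for the finite cover of $K'$ by small coordinate balls, so that any two points of $K'$ at distance $< \delta$ lie in a common ball where the $C^1$ estimate applies and hence have distinct images once $n$ is large. For pairs at distance $\ge \delta$: since $d(f_n(x),x) < \delta/3$ for $n$ large, the triangle inequality gives $d(f_n(x), f_n(y)) \ge d(x,y) - 2\delta/3 \ge \delta/3 > 0$. Combining the two cases yields an $N$ beyond which $f_n|_{K'}$ is injective. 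I expect this Lebesgue-number / two-scale splitting to be the main (though still routine) point.

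For the containment (assertion (2)): since $K$ lies in the interior of $K'$, let $\eta > 0$ be such that the closed $\eta$-neighbourhood of $K$ is contained in $K'$. Choose $N \ge N_1$ so large that additionally $d(f_n(x), x) < \eta$ for all $x \in K'$ and $n \ge N$. Fix $n \ge N$ and a point $q \in K$. I want to show $q \in f_n(K')$. Consider the sphere $\partial B_\eta(q)$, which lies in $K'$; its image under $f_n$ is a topological sphere staying within distance $\eta$ of $\partial B_\eta(q)$, hence disjoint from $q$, hence (since $f_n|_{K'}$ is injective, a homeomorphism onto its image, and $B_\eta(q)$ is a ball) the point $q$ lies in the bounded component of the complement of $f_n(\partial B_\eta(q))$, which is $f_n(B_\eta(q)) \subset f_n(K')$. (Equivalently: $f_n$ restricted to the closed ball $\overline{B_\eta(q)}$ is a map agreeing with the identity to within $\eta$ on the boundary, so by a degree/Brouwer argument its image covers $q$.) Since $q \in K$ was arbitrary and $K$ is connected — so one fixed $\eta$ and one $N$ work uniformly — this gives $K \subset f_n(K')$ for all $n \ge N$, completing the proof.
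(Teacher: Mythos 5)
Your proof is correct, but both halves run along genuinely different lines from the paper's. For injectivity, the paper argues by contradiction: from hypothetical pairs $p_n\ne q_n$ with $f_n(p_n)=f_n(q_n)$ it extracts (via compactness and a mean-value step along the segment joining them) a point where the limit map would have a vanishing directional derivative, contradicting $f_n\to \mathrm{id}$ in $C^1$. You instead give a direct quantitative argument: a uniform bi-Lipschitz estimate on convex coordinate balls from $\|Df_n-\mathrm{Id}\|$ small, combined with a Lebesgue-number splitting into near pairs (handled by the local estimate) and far pairs (handled by $\sup_{K'} d(f_n(x),x)\to 0$ and the triangle inequality). Your version is more explicit and yields an effective lower bound on $d(f_n(x),f_n(y))$; the paper's is softer but avoids setting up charts. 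For the containment, the paper shows that $A=K\cap f_n(K')$ is nonempty, closed, and open in $K$ (openness via invertibility of $Df_n$, hence $f_n$ being a local diffeomorphism) and concludes $A=K$ by connectedness of $K$; you instead run a Brouwer-degree argument on a small ball $B_\eta(q)$ around each $q\in K$, using only that $f_n$ is $C^0$-close to the identity on the boundary sphere. One small remark: you credit the uniformity of $\eta$ and $N$ to connectedness of $K$, but your argument in fact never uses connectedness (compactness alone gives the uniformity), so your route proves assertion (2) without that hypothesis, whereas the paper's open-and-closed argument genuinely needs it. Do make sure, when you write this up, that the covering balls are convex in their coordinates (so the integral form of the mean value inequality applies) and that $\eta$ is small enough that each $\overline{B_\eta(q)}$, $q\in K$, sits inside a single chart and inside $K'$; both are routine compactness arrangements.
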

\begin{proof}
Assume that there are arbitrary large numbers $n$ for which $f_n|_{K'}$
is not injective.
Then there exists sequences $p_n,q_n\in K$ such that (after passing
to a suitably chosen subsequence) $p_n\ne q_n$, but $f_n(p_n)=f_n(q_n)$.
We may assume that $p_n$ and $q_n$ are both convergent. Then
there is a point $p\in K'$ such that
\[
\lim p_n=\lim f_n(p_n)=\lim f_n(q_n)=\lim q_n.
\]
Locally, i.e., in a neighbourhood of $p$, we may embed everything in the
euclidean space. Then the above implies that there are vectors of unit length
$v_n$ and points $\xi_n$ on the segment between $p_n$ and $q_n$ such that
\[
D_{v_n}(f_n)_{\xi_n}= \lim_{t\to 0}\frac{ f_n(\xi_n+tv_n)-f_n(\xi_n)}{t}=0
\]
Taking the limit, we obtain that $\lim f_n$ has at $p$ a zero directional
derivative in some direction. This contradicts the assumption that
$f_n\to id_M$ in $C^1$-topology.

Next we deal with the second assertion. By enlarging $K$ (if necessary)
we may assume that $K$ has non-empty interior. Let $q\in int(K)$.
We fix some metric on $X$ defining the topology.
Choose $\epsilon>0$ such that
$\epsilon$ is smaller that the distance between $K$ and
$X\setminus K'$ and furthermore smaller than the distance between $q$
and $\partial K$.

Then we can choose a number $N$ such that for all $n\ge N$ we 
have 
\[
\sup_K d(x,f_n(x))<\epsilon\ \ \forall n\ge N, \forall x\in K'
\]
In addition, we may and do require that $(Df_n)_x$ is invertible
for all $x\in K'$ and $n\ge N$.

We fix a number $n\ge N$ and consider the set
$A=K\cap f_n(K')$. By construction $f_n(q)\in A$, hence $A$ is not empty.
The set $A$ is closed, because $K$ and $K'$ are compact.
On the other hand, if $p\in K'$ with $f_n(p)\in K$, then
$d(p,f_n(p))<\epsilon$ implies $d(p,K)<\epsilon$, which in turn
implies that $p$ is in the interior of $K'$. 
Now $f_n$ is locally a diffeomorphism, since $(Df_n)_x$ is invertible
for all $x\in K'$. Therefore, if $p\in K'$ with $f_n(p)\in K$,
then $f_n(p)$ admits an open  neighbourhood in $X$ which is also
contained in the image $f(K')$. As a consequence, the set
$A=K\cap f(K')$ is both closed and open in $K$. Since $A$ is non-empty
and $K$ is connected, it follows that $A=K$, i.e., it follows
that $K\subset f_n(K')$.
\end{proof}

\begin{corollary}\label{ex-inverse}
Under the above assumptions for $n\ge N$ there is a unique
inverse map $g_n:K\to K'$, i.e., a unique map $g_n:K\to K'$
with $f_n\circ g_n=id_K$.
\end{corollary}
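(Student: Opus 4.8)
The plan is to simply combine the two conclusions of the preceding lemma. First I would invoke assertion (2), which gives $K\subseteq f_n(K')$ for all $n\ge N$; hence for every point $p\in K$ there is at least one point $q\in K'$ with $f_n(q)=p$. Next I would invoke assertion (1), the injectivity of $f_n|_{K'}$, to conclude that such a $q$ is unique. These two facts together permit the definition of $g_n:K\to K'$ by letting $g_n(p)$ be the unique element of $K'$ carried to $p$ by $f_n$; by construction $f_n\circ g_n=\mathrm{id}_K$.

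For the uniqueness clause I would argue that any map $h:K\to K'$ with $f_n\circ h=\mathrm{id}_K$ must coincide with $g_n$: for each $p\in K$ we have $f_n(h(p))=p=f_n(g_n(p))$, and since $h(p)$ and $g_n(p)$ both lie in $K'$, the injectivity of $f_n|_{K'}$ forces $h(p)=g_n(p)$. Thus $g_n$ is the only such map.

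There is essentially no obstacle here; the corollary is a purely formal consequence of the lemma. The one point worth a remark is that this $g_n$ is in fact continuous, indeed smooth near $K$: the proof of the lemma arranged $(Df_n)_x$ to be invertible for all $x\in K'$, so $f_n$ is a local diffeomorphism near every point of $K'$, and its local inverses patch together to a smooth map because $g_n$ is already known to be a well-defined global section of $f_n$ over $K$. This regularity is not part of the statement, but it is what will make $g_n$ usable in the sequel.
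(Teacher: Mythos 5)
Your proof is correct and matches the paper's (implicit) reasoning: the corollary is stated without proof precisely because it is the formal combination of the lemma's two conclusions — surjectivity of $f_n|_{K'}$ onto $K$ gives existence of a preimage, injectivity gives its uniqueness and hence the uniqueness of the section $g_n$. Your closing remark on smoothness of $g_n$ via invertibility of $(Df_n)_x$ is a correct and useful observation, consistent with how the map is used later in the paper.
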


\section{Continuity of the Kobayashi pseudodistance}

\begin{theorem}\label{F-cont}
Let $S$ be an orientable real surface with a smooth family of complex structures $J_t$.
Assume that there is a relatively compact subset $\Omega$ 
such that all the
complex structures $J_t$ agree outside $\Omega$.

Then the map $F:TS\times I\to\R^+_0$ given by the Kobayashi-Royden
pseudometric on $TS$ with respect to $J_t$ ($t\in I$) is continuous
on $TS\times I$.
\end{theorem}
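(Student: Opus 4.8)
The plan is to prove continuity of $F$ on $TS \times I$ in two stages: first upper semicontinuity, then lower semicontinuity, exploiting that the $J_t$ differ only on the relatively compact set $\Omega$. For the upper semicontinuity, fix $(v_0, t_0) \in TS \times I$ with basepoint $p_0 \in S$. Given $\epsilon > 0$, choose a holomorphic disc $\phi_0 : (\Delta, 0) \to (S, p_0)$ with respect to $J_{t_0}$ and a tangent vector $w_0 \in T_0\Delta$ with $(\phi_0)_* w_0 = v_0$ and $|w_0| < F_{(S,J_{t_0})}(v_0) + \epsilon$. The point is to perturb $\phi_0$ into a $J_t$-holomorphic disc for $t$ near $t_0$ while controlling the image of $w_0$. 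Here I would invoke the fact that $J_t$-holomorphicity is governed by a Beltrami-type equation whose coefficient varies smoothly in $t$ and is supported (as a perturbation of the $J_{t_0}$-structure) on the compact set $\Omega$; shrinking the disc slightly if necessary and solving the corresponding $\bar\partial$-problem with smooth dependence on parameters produces a family $\phi_t : (\Delta, 0) \to (S, p_t)$ of $J_t$-holomorphic maps with $p_t \to p_0$ and $(\phi_t)_* w_t = v_t$ for a suitable family $v_t \to v_0$ of tangent vectors; this gives $\limsup_{t \to t_0} F(v_t, t) \le F(v_0, t_0) + \epsilon$, and letting $\epsilon \to 0$ yields upper semicontinuity (combined with the standard upper semicontinuity of $v \mapsto F(v,t)$ for fixed $t$).

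For the lower semicontinuity, suppose toward a contradiction that there is a sequence $(v_n, t_n) \to (v_0, t_0)$ with $F(v_n, t_n) \to \ell < F(v_0, t_0)$. Pick $J_{t_n}$-holomorphic discs $\phi_n : (\Delta, 0) \to (S, p_n)$ with $(\phi_n)_* w_n = v_n$ and $|w_n| \to \ell$; after rescaling the $\Delta$-coordinate we may take $|w_n|$ bounded and the $\phi_n$ defined on a fixed slightly smaller disc. Now the uniformization dichotomy stated in the excerpt applies: either $(S, J_{t_0})$ is hyperbolic, in which case one uses that the $J_{t_n}$-structures converge in $C^1$ on compacta, so the $\phi_n$ form a normal family (their images land in a region with an upper curvature bound by the Ahlfors--Schwarz lemma, hence are equicontinuous), and a subsequential limit is a $J_{t_0}$-holomorphic disc $\phi_0$ with $(\phi_0)_* w_0 = v_0$ and $|w_0| \le \ell$, contradicting the definition of $F(v_0, t_0)$; or $F_{(S,J_{t_0})} \equiv 0$ and there is nothing to prove since $\ell \ge 0 = F(v_0,t_0)$.

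The main obstacle is the normal-family / compactness argument in the lower-semicontinuity step: one must ensure the discs $\phi_n$ do not degenerate (escape to the boundary or blow up in derivative). The leverage is precisely the hypothesis that all $J_t$ coincide outside the relatively compact $\Omega$: outside $\Omega$ the surface carries a fixed complete hyperbolic (or flat, in the non-hyperbolic case) metric independent of $t$, so the images of the $\phi_n$ cannot wander off to infinity without incurring infinite length, and the Ahlfors--Schwarz lemma (Theorem~4 in the excerpt) applied to a fixed background metric of curvature bounded above by a negative constant gives a uniform bound $\|(D\phi_n)_0\| \le C$ independent of $n$ and $t_n$. With this uniform bound in hand, Montel's theorem for the varying structures (or passing to charts where $J_{t_n} \to J_{t_0}$ uniformly) extracts the convergent subsequence, and the limit disc realizes the bound $\ell$, completing the contradiction. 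I would also remark that exactly the same mechanism — the fixed structure off $\Omega$ — is what guarantees the perturbed discs in the upper-semicontinuity step stay controlled, so the two halves of the argument share their essential input.
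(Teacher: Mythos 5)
Your overall architecture --- reduce to the hyperbolic case, get lower semicontinuity from an Ahlfors--Schwarz/normal-families compactness argument, and get upper semicontinuity by transplanting a near-extremal $J_{t_0}$-disc (shrunk so that its image is relatively compact) into a $J_t$-holomorphic disc --- is the same as the paper's; note that the proof printed under Theorem~\ref{F-cont} only disposes of the non-hyperbolic case, the substance being deferred to Propositions~\ref{uniform-semi} and~\ref{usc}. The lower-semicontinuity half of your argument is essentially identical to Proposition~\ref{uniform-semi}: the paper constructs metrics $g_t$ agreeing with the curvature $-1$ metric of $(X,J_0)$ outside the compact set and having curvature $\le -C<0$ for small $t$, applies Ahlfors--Schwarz to get equicontinuity of \emph{all} $J_t$-holomorphic discs, and passes to a limit of near-extremal discs. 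Where you genuinely diverge is the upper-semicontinuity half: the paper regards $S\times I$ as a Levi-flat CR hypersurface, embeds it into a complex manifold, takes a Stein neighbourhood of the slice $S_0$ with a holomorphic retraction $\rho$ onto it, and transplants the shrunken extremal disc via $F_t=\zeta_t\circ g_t\circ F$; you instead solve a parametrized Beltrami equation. Your route is more elementary and better adapted to dimension one (the measurable Riemann mapping theorem with parameters is tailor-made here, whereas the CR-embedding and Stein-neighbourhood machinery is heavy), but to make it precise you should (a) pull $J_t$ back along $\phi_0$ to a Beltrami coefficient $\mu_t$ on $\Delta$ with $\|\mu_t\|_\infty\to 0$ --- this is legitimate even at critical points of $\phi_0$, since precomposition with a holomorphic map preserves $|\mu|$ almost everywhere --- then set $\phi_t=\phi_0\circ (w^{\mu_t})^{-1}$ for a normalized solution $w^{\mu_t}$, rather than appeal vaguely to ``a $\bar\partial$-problem with smooth dependence''; and (b) observe that joint upper semicontinuity at $(v_0,t_0)$ requires the perturbed discs to realize \emph{all} nearby data $(v_n,t_n)$, not just a chosen path $(v_t,t)$, which one gets by precomposing with small automorphisms of $\Delta$, using that $\phi_t$ is an immersion at $0$ covering a neighbourhood of $p_0$ of uniform size. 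Neither point is a fatal gap, and the paper's own write-up of Proposition~\ref{usc} is no more explicit about (b) than you are.
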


\begin{remark}
It is important that we deform the complex structure only inside some
fixed relatively compact subset. Without this assumption the statement
is not true. For example, we have seen that there is a family of complex
structure $J_t$ on $D=\{z:|z|<1\}$ such that $(D,J_0)$ is biholomorphic
to the unit disc while $(D,J_t)\simeq\C$ for every $t\ne 0$.
Evidently for this family $F$ is not continuous, since it vanishes for
$t\ne 0$ and is non-zero for $t=0$.
\end{remark}

\begin{proof}
We will need the result only for the case where $X$ is hyperbolic.
However, it is easy to see that the statement holds if $X$ is not hyperbolic:
$X$ is not hyperbolic precisely if and only if one of the following
conditions are fulfilled:
\begin{enumerate}
\item
$X$ is compact and $b_1(X)\le 2$.
\item
There exists a complex analytic compactification
$X\hookrightarrow\bar X$ such that $\bar X$ is compact
and simply-connected and $\bar X\setminus X$ contains at most
two points.
\end{enumerate}
In this formulation it is clear that the property of not being
hyperbolic can not be changed by modifying the complex
structure only inside some fixed compact set.
Therefore $d_{(X,J_t)}$ vanish for all $t$
if $d_{(X,J_0)}$ vanishes. In particular, in this case
the Kobayashi pseudodistance depends continuously on $t$
(because it is constantly zero).

Thus
we may from now on assume that $(X,J_0)$ is hyperbolic.

\end{proof}
\begin{proposition}\label{uniform-semi}
Let $X$ be an orientable real surface with a smooth family of complex structures $J_t$.
Assume that there is a relatively compact subset $\Omega$ 
such that all the
complex structures $J_t$ agree outside $\Omega$.

Assume that $(X,J_0)$ is hyperbolic.

Then for every $\varepsilon>0$ there exists a $\delta>0$ such that for
all $x\in X$,
 $v\in T_xX$ and $t\in[0,\delta]$ the inequality
\[
F_{(X,J_t)}(v)\le (1+\varepsilon) F_{(X,J_0)}(v)
\]
holds.
\end{proposition}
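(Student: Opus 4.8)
The plan is to reduce the statement to the Ahlfors--Schwarz lemma by exhibiting, for small $t$, a hermitian metric $h_t$ on $(X,J_t)$ whose Gaussian curvature is bounded above by $-C_t$ with $C_t \to 1$ as $t \to 0$, and which dominates the Poincar\'e metric $h_0$ of $(X,J_0)$ up to a factor close to $1$. Since $(X,J_0)$ is hyperbolic and complete, and since all the $J_t$ agree outside the relatively compact set $\Omega$, the complex structure is only genuinely being perturbed on a compact region; the idea is to take $h_0$ (the Poincar\'e metric of $(X,J_0)$, which has constant curvature $-1$) and modify it to be $J_t$-hermitian. Concretely, I would write $h_0$ in terms of $J_0$ and replace $J_0$ by $J_t$ in the formula, obtaining a $(1,1)$-form $h_t$ with respect to $J_t$; away from $\Omega$ this is literally $h_0$ again, and on the compact set $\overline{\Omega}$ the metric $h_t$ and its curvature depend smoothly on $t$, with $h_t \to h_0$ and $\mathrm{curv}(h_t) \to \mathrm{curv}(h_0) = -1$ uniformly on $\overline{\Omega}$ as $t\to 0$.

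The key steps in order: (1) fix $h_0 = $ the complete Poincar\'e metric of $(X,J_0)$; (2) using a partition of unity subordinate to $\{\Omega' , X\setminus\overline{\Omega}\}$ for a slightly larger relatively compact $\Omega'\supset\overline\Omega$, manufacture from $h_0$ a family of $J_t$-hermitian metrics $h_t$ on $X$ that equal $h_0$ outside $\Omega'$ and vary smoothly in $t$; (3) compute the Gaussian curvature $\kappa_t$ of $h_t$ and observe, by compactness of $\overline{\Omega'}$ and smoothness in $t$, that for every $\eta>0$ there is $\delta>0$ with $\kappa_t \le -(1-\eta)$ everywhere and $h_t \ge (1-\eta) h_0$ everywhere, for all $t\in[0,\delta]$; (4) apply the Ahlfors--Schwarz lemma (the theorem quoted in the excerpt) with the constant $C = 1-\eta$: for every holomorphic $f:(\Delta, \mathrm{Poincar\acute{e}}) \to (X, J_t, h_t)$ one gets $\|Df\| \le 1/(1-\eta)$, hence $F_{(X,J_t)}(v) \ge (1-\eta)\, \|v\|_{h_t} \ge (1-\eta)^2 \|v\|_{h_0} = (1-\eta)^2 F_{(X,J_0)}(v)$; (5) rephrase: since the Kobayashi--Royden pseudometric $F_{(X,J_0)}$ of the hyperbolic surface equals $\|\cdot\|_{h_0}$, the bound reads $F_{(X,J_t)}(v) \ge (1-\eta)^2 F_{(X,J_0)}(v)$. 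Wait — this is the wrong direction; I actually want an \emph{upper} bound on $F_{(X,J_t)}$.

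So the argument must be run the other way: I want $F_{(X,J_t)}(v)$ \emph{small}, i.e. I need the \emph{source} disc, not the target, so I should instead compare by producing, for each $v$, a holomorphic disc into $(X,J_t)$ realizing nearly the value $F_{(X,J_0)}(v)$. The cleaner route: let $f_0 : \Delta \to (X,J_0)$ be a (near-extremal) holomorphic map with $f_0(0)=x$, $(f_0)_*(a\partial/\partial z) = v$, $|a|$ close to $F_{(X,J_0)}(v)$. For small $t$, $f_0$ is \emph{almost} $J_t$-holomorphic (it is exactly $J_t$-holomorphic off $f_0^{-1}(\Omega)$, and $\bar\partial_{J_t} f_0$ is $O(t)$ on the compact part), so by a $\bar\partial$-correction / implicit function argument on the relatively compact region I can perturb $f_0$ to a genuinely $J_t$-holomorphic $f_t:\Delta'\to X$ (on a slightly smaller disc, then rescale) with $f_t(0)=x$ and derivative $(1+o(1))v$; rescaling the disc by $1+o(1)$ gives a bona fide competitor in the definition of $F_{(X,J_t)}$, yielding $F_{(X,J_t)}(v)\le (1+\varepsilon)|a| \le (1+\varepsilon)F_{(X,J_0)}(v)$, uniformly in $x,v$ by homogeneity in $v$ and compactness in $x$ (the only place the complex structure changes is over $\overline\Omega$).

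The main obstacle is step where one perturbs the almost-holomorphic disc $f_0$ to a truly $J_t$-holomorphic disc with controlled derivative at $0$, uniformly over all base points $x$ and all (normalized) tangent vectors $v$: this is where one needs the hypothesis that $J_t$ equals $J_0$ outside the fixed relatively compact $\Omega$ — it makes the $\bar\partial_{J_t}$-defect compactly supported and $O(t)$ with uniform constants — together with a quantitative solution of the $\bar\partial$-equation on the disc with a small loss of domain, plus completeness of $(X,J_0)$ to keep the perturbed disc from escaping to the part of $X$ where nothing has changed. Uniformity in $x$ follows because for $x$ outside a fixed large compact set the near-extremal disc can be taken to avoid $\overline\Omega$ entirely, so there $F_{(X,J_t)}=F_{(X,J_0)}$ identically; only finitely much of $X$ (a neighborhood of $\overline\Omega$) requires the perturbation argument, and there compactness gives the uniform $\delta$.
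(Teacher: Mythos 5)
Your second, corrected strategy --- take a near-extremal disc $f_0:\Delta\to(X,J_0)$ for $v$, precompose with $z\mapsto(1-\eta)z$ so that the image lies in a fixed compact set, and then deform the resulting almost-$J_t$-holomorphic disc into a genuinely $J_t$-holomorphic competitor with nearly the same derivative at $0$ --- is exactly the route the paper takes for the upper bound (its Proposition~\ref{usc}). The difference is the perturbation mechanism: instead of a $\bar\partial$-correction/implicit-function argument, the paper realizes $S\times[0,1]$ as a Levi-flat CR-hypersurface inside a complex manifold, takes a Stein neighbourhood of the leaf $S_0=(S,J_0)$ together with a holomorphic retraction $\rho$ onto it, and produces the $J_t$-holomorphic disc as $\zeta_t\circ g_t\circ F$, where $g_t$ inverts $\rho\circ\zeta_t$ on the relevant compact set. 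Your mechanism is workable in principle but is the analytically harder route and is left entirely as a sketch at the one step you yourself identify as the main obstacle. The Ahlfors--Schwarz computation you discard is, as you noticed, the wrong direction; it is in fact what the paper uses for the complementary uniform \emph{lower} bound $F_{(X,J_t)}\ge(1-\varepsilon)F_{(X,J_0)}$.

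The genuine gap is your uniformity argument. You claim that for $x$ outside a large compact set the near-extremal disc can be taken to avoid $\overline{\Omega}$, ``so there $F_{(X,J_t)}=F_{(X,J_0)}$ identically.'' Both halves of this are false. Extremal discs are global objects: already for $X=\Delta$ the extremal disc at any point in any direction is an automorphism of $\Delta$, hence surjective, so it cannot avoid any nonempty $\overline{\Omega}$; restricting to discs inside $X\setminus\overline{\Omega}$ replaces $F_{(X,J_0)}$ by $F_{(X\setminus\overline{\Omega},J_0)}$, which is in general strictly larger, and bounding the discrepancy is a nontrivial localization estimate you do not supply. Moreover $F_{(X,J_t)}\ne F_{(X,J_0)}$ off $\Omega$ in general: the deformation of Lemma~\ref{shrink} modifies a collar of an annulus so as to change its modulus, which changes the Poincar\'e metric at every point. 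As written, your argument therefore yields only pointwise (or locally uniform) upper semicontinuity, not the bound uniform over all $x\in X$ that the proposition asserts; to get the latter one needs a genuinely global input, e.g.\ a curvature-comparison argument giving $g^{(t)}\le(1+\varepsilon)g^{(0)}$ for the Poincar\'e metrics, rather than the reduction to a neighbourhood of $\overline{\Omega}$ that you propose.
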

In words: The Kobayashi pseudometric is uniformly upper semi-continuous
with respect to $t$.

\begin{proof}
Let $h_t$ be a family of hermitian metrics for $(X,J_t)$ varying smoothly
with $t$. 
Let $g$ be the complete hermitian
metric of constant Gaussian curvature $-1$ on $(X,J_0)$
(which is unique and exists because $(X,J_0)$ is hyperbolic).
Then there is a positive function $ \rho:X\to\R$ such that $h_0=\rho\cdot g$.
We define $g_t=\rho\cdot h_t$.
Let $G_t$ denote the Gaussian curvature for $g_t$. Then $G(t,x)=G_t(x)$ is continuous
and equals $-1$ on 
$\left(\{0\}\times X\right )\cup
\left([0,1]\times(X\setminus K)\right)$.
Since $K$ is compact, we can find a number $c>0$ and a constant $1>C>0$ such that
$G_t\le -C$ for all $t\le c$ and at every point of $X$.
Furthermore, once again by compactness of $K$, there is an other constant $C'>0$
such that $g_0=C'g$.
Using Ahlfors-Schwarz lemma it now follows that 
\[
||\phi_*v||_{g}\le \frac{C'}{C}||v||
\]
for every $v\in T_0\Delta$ and every holomorphic map $\phi:\Delta\to(X,J_t)$
($t\in[0,c]$).
The family of all such maps $\phi$ is therefore equicontinuous.
Due to the theorem of Arzela Ascoli
they form a normal family.
Now fix $t$, $p\in X$ and let $(v_n,t_n,x_n)$ be a sequence
 such that $\lim x_n=x$, $\lim t_n=t$ and $\lim v_n=v\in T_xX$.
Let $\alpha=\limsup_n (F_{(X,J_{t_n})})_{x_n}(v_n)$. By the definition
of the infinitesimal Kobayashi-Royden
there is a sequence of holomorphic maps 
\[
\phi_n:\Delta\to(X,J_{t_n}),\quad,\quad \phi_n(0)=x_n, 
(\phi_n)_*\alpha_n\frac{\partial}{\partial z}=v_n
\]
with 
$\lim \alpha_n=\alpha$. Due to the normal family property the sequence $\phi_n$ admits a convergent
subsequence, i.e., there is a holomorphic map
\[
\phi:\Delta\to(X,J_{t}),\quad,\quad \phi(0)=x, (\phi)_*\alpha\frac{\partial}{\partial z}=v
\]
It follows that
\[
F_{(X,J)}(v)\le\alpha \liminf_n (F_{(X,J_{t_n})})_{x_n}(v_n)
\]
This establishes lower semicontinuity.
Upper semicontinuity follows from proposition \ref{usc} below.
\end{proof}
\begin{proposition}
Let $(J_t)_{0\le t\le 1}$ 
be a smooth family of complex structures on a 
orientable real surface $S$.
Let $K$ be  a compact subset of $S$, $K\ne S$.
Then there is a number $c>0$ such that
for all $t\in[0,c]$
there exists a holomorphic injective map $\phi_t:K^\circ\to (S,J_t)$
such that $\phi_t$ converges uniformly to the identity map $id_S$
for $t\to 0$.
\end{proposition}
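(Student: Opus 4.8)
The plan is to deform the inclusion map of a suitable relatively compact subdomain; one should not try to produce a biholomorphism between the global structures, since in general none exists (for instance when $S$ is a torus and $K=S$). As $K\neq S$, the open set $S\setminus K$ is non-empty, so one can choose a connected, relatively compact, smoothly bounded open set $\Omega$ with $K\subset\Omega$ and $\overline{\Omega}\neq S$: take a large member of an exhaustion of $S$ containing $K$ when $S$ is non-compact, and delete a small closed disc from $S\setminus K$ when $S$ is compact. Then $(\Omega,J_{0})$ is a non-compact Riemann surface and hence Stein, so that $H^{1}\bigl(\Omega,\mathcal{O}(TS|_{\Omega})\bigr)=0$; this vanishing is the one fact on which everything rests, and it is also what forces the hypothesis $K\neq S$. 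I also fix a connected compact neighbourhood $K_{1}$ of $K$ with $K_{1}\subset\Omega$ (replacing $K$ by a larger connected compact set if $K$ is disconnected) and a relatively compact smoothly bounded $\Omega_{1}$ with $K_{1}\subset\Omega_{1}$ and $\overline{\Omega_{1}}\subset\Omega$.

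A smooth map $\Phi\colon(\Omega,J_{0})\to(S,J_{t})$ is holomorphic exactly when the $J_{0}$-antiholomorphic part of $d\Phi$, measured against $J_{t}$ on the target, vanishes; writing this equation as $\mathcal{F}(t,\Phi)=0$, the inclusion $\iota\colon\Omega\hookrightarrow S$ solves $\mathcal{F}(0,\cdot)=0$. I would realize $\mathcal{F}$ as a smooth map of Banach spaces of maps near $(0,\iota)$, working in $C^{k,\alpha}$-spaces and --- to avoid boundary regularity --- solving $\bar\partial$ on all of the Stein domain $\Omega$ while measuring convergence by interior elliptic estimates on $\overline{\Omega_{1}}$. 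Linearizing the Cauchy--Riemann equation at the holomorphic map $\iota$ shows that the partial derivative $D_{\Phi}\mathcal{F}(0,\iota)$ is the Dolbeault operator $\bar\partial$ acting on sections of the holomorphic line bundle $\iota^{*}TS=TS|_{\Omega}$; since $\Omega$ is Stein this operator is surjective and admits a bounded right inverse. The implicit function theorem then produces, for $t\in[0,c]$ with some $c>0$, holomorphic maps $\Phi_{t}\colon(\Omega,J_{0})\to(S,J_{t})$ depending smoothly on $t$ with $\Phi_{0}=\iota$, and in particular $\Phi_{t}\to\iota$ in the $C^{1}$-topology on $\overline{\Omega_{1}}$ as $t\to 0$.

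To finish, extend each $\Phi_{t}$ to a smooth self-map $\widehat{\Phi}_{t}$ of $S$ equal to $\Phi_{t}$ on $\Omega_{1}$ and to $\mathrm{id}_{S}$ outside a slightly larger compact set; then $\widehat{\Phi}_{t}\to\mathrm{id}_{S}$ in the $C^{1}$-topology. The lemma preceding Corollary~\ref{ex-inverse} --- self-maps $C^{1}$-close to the identity are injective on compacta --- applied to the connected compact set $K_{1}$ lying in the interior of $\overline{\Omega_{1}}$, shows that $\widehat{\Phi}_{t}$, hence $\Phi_{t}$, is injective on $K_{1}$ for all sufficiently small $t$; after shrinking $c$ this holds for every $t\in[0,c]$. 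Then $\phi_{t}:=\Phi_{t}|_{K^{\circ}}$ is a holomorphic injective map into $(S,J_{t})$ and $\phi_{t}\to\mathrm{id}_{S}$ uniformly on $K$, hence on $K^{\circ}$. The main obstacle is the functional-analytic set-up of the implicit function theorem step: choosing function spaces in which the $t$-dependent nonlinear operator $\mathcal{F}$ is differentiable and in which the surjection $D_{\Phi}\mathcal{F}(0,\iota)$ splits, i.e.\ has a bounded right inverse, together with the behaviour near $\partial\Omega$ (most cleanly handled by solving $\bar\partial$ on the larger Stein domain and estimating only on $\overline{\Omega_{1}}$). The topological input is by contrast elementary; in particular no separate treatment of the non-hyperbolic surfaces $\P_{1}$, $\C$, $\C^{*}$ and the tori is required, since any relatively compact proper subdomain of a Riemann surface is automatically hyperbolic and Stein.
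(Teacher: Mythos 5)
Your argument is correct in outline but takes a genuinely different route from the paper's. The paper does not prove this proposition separately; the intended construction is the one carried out in the proof of the adjacent Proposition~\ref{usc}: the family $(J_t)$ turns $S\times[0,1]$ into a Levi-flat CR hypersurface, which is embedded into a complex manifold $Y$ (Hill--Lewandowski--Nurowski), the leaf $S_0=(S,J_0)$ receives a Stein neighbourhood (Siu) with a holomorphic retraction $\rho$ (Demailly--Lempert--Shiffman), and $\phi_t$ arises as the local inverse $g_t$ of the holomorphic map $\rho\circ\zeta_t:(S,J_t)\to(S,J_0)$ on a compact set, the inverse being supplied by Corollary~\ref{ex-inverse}. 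You instead solve the Cauchy--Riemann equation directly: deform the inclusion of a relatively compact subdomain $\Omega\supset K$ by the implicit function theorem, the linearization being $\bar\partial$ on sections of $TS|_{\Omega}$, surjective because $\Omega$ is an open Riemann surface, hence Stein, so $H^1(\Omega,\mathcal{O}(TS|_{\Omega}))=0$. Both arguments rest on the same two pillars --- Steinness of a non-compact Riemann surface (which is indeed where $K\ne S$ enters, as you correctly observe) and the elementary injectivity lemma preceding Corollary~\ref{ex-inverse}. What your route buys is independence from the three cited embedding/retraction theorems; what it costs is the functional-analytic core that you only sketch: a Banach-manifold model for maps $\overline{\Omega}\to S$ near the inclusion, differentiability there of the $t$-dependent CR operator, and a bounded right inverse for $\bar\partial$ in H\"older norms (available from the Cauchy--Green transform after trivializing $TS$ over a slightly larger Stein domain, with interior estimates as you indicate). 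These steps are standard but not free, and the paper's detour through the CR embedding exists precisely to outsource them to the literature. A small simplification: rather than extending $\Phi_t$ to a global self-map of $S$ before invoking the injectivity lemma, you may apply the first half of that lemma's proof directly to $\Phi_t$ on the connected compact set $K_1\subset\overline{\Omega_1}$, since only $C^1$-closeness to the identity near the compact set is used there.
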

\begin{proposition}\label{usc}
Let $(J_t)_{0\le t\le 1}$ 
be a smooth family of complex structures on a 
orientable real surface $S$.

The Kobayashi-Royden pseudodistance is upper-semicontinuous as
a function on $TS\times[0,1]$.
\end{proposition}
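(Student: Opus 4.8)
The plan is to verify upper semicontinuity at an arbitrary point $(v_0,t_0)\in TS\times[0,1]$, with $v_0\in T_{p_0}S$, and to do so by transporting an almost extremal holomorphic disc for $(S,J_{t_0})$ into the nearby structures $J_t$ using the straightening maps supplied by the proposition immediately preceding this one. Since upper semicontinuity is a pointwise local condition, after replacing the family by $s\mapsto J_{t_0+s}$ (a smooth family on a neighbourhood of $0$ in $\R$ intersected with $[0,1]$) and treating the two sides of $t_0$ separately, it suffices to prove: for every $v_0\in T_{p_0}S$ and every $\alpha>F_{(S,J_0)}(v_0)$ there are a neighbourhood $U'$ of $v_0$ in $TS$ and an $\eta>0$ with $F_{(S,J_t)}(v)<\alpha$ for all $v\in U'$ and all $t\in[0,\eta)$. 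The case $v_0=0$ is easy (a fixed holomorphic immersion of $\Delta$ through $p_0$, transported as below, realises vectors near $0$ with arbitrarily small source norm), so I assume $v_0\ne0$.

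First I would fix $c_1$ with $F_{(S,J_0)}(v_0)<c_1<\alpha$ and a holomorphic map $\psi_1\colon\Delta\to(S,J_0)$ with $\psi_1(0)=p_0$ and $(\psi_1)_*(w_1)=v_0$, $|w_1|<c_1$; after restricting and rescaling the source I may assume $\psi_1$ is holomorphic on a neighbourhood of $\overline{\Delta}$ with still $|w_1|<\alpha$. Working in a holomorphic chart $\kappa$ of $(S,J_0)$ around $p_0$, an affine modification of $\kappa\circ\psi_1$ in that chart (translating $\psi_1(0)$ to the target base point and scaling the leading coefficient so as to hit the target vector), followed by one fixed rescaling of the source disc, produces for every $(x,v)$ in a small neighbourhood $U$ of $v_0$ in $TS$ a holomorphic map $\psi_{x,v}\colon\Delta\to(S,J_0)$ with $\psi_{x,v}(0)=x$, $(\psi_{x,v})_*(w_{x,v})=v$, $|w_{x,v}|<\alpha$, depending $C^1$-continuously on $(x,v)$, and with all the compact sets $\psi_{x,v}(\overline{\Delta})$ contained in one fixed compact set $K_1\subset S$ lying in a single chart domain. (This is where $v_0\ne0$ is used: then $\psi_1$ is an immersion at $0$, so the affine modification is well defined.) Then I would pick a compact $K$ with $K_1\subset K^\circ$ and $K\ne S$, which exists because $K_1$ lies in one chart.

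Applying the preceding proposition to $K$ yields $c>0$ and holomorphic injective maps $\phi_t\colon(K^\circ,J_0)\to(S,J_t)$, $t\in[0,c]$, with $\phi_t\to\mathrm{id}$ uniformly as $t\to0$. Since each $\phi_t$ is $J_0$-to-$J_t$-holomorphic with $J_t\to J_0$ smoothly, standard interior (elliptic, i.e. Cauchy-type) estimates promote this to $C^1$-convergence on the compact set $K_1\subset K^\circ$; in particular, for $t$ small, $\phi_t$ restricts to a diffeomorphism of a fixed neighbourhood of $p_0$ onto an open set again containing a fixed neighbourhood of $p_0$, with $\phi_t^{-1}$ likewise $C^1$-close to the identity there (as in Corollary~\ref{ex-inverse}). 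For a target $(x,v)$ near $v_0$ and $t$ small I would then set $y=\phi_t^{-1}(x)$ and $w=(D\phi_t)_y^{-1}(v)$; shrinking $U'$ and $\eta$ guarantees $(y,w)\in U$ whenever $v\in U'$ and $t\in[0,\eta)$, since $(y,w)\to v_0$ in $TS$ as $(x,v)\to v_0$ and $t\to0$. The composition $\phi_t\circ\psi_{y,w}\colon\Delta\to(S,J_t)$ is then holomorphic (as $\psi_{y,w}(\overline{\Delta})\subset K_1\subset K^\circ$), sends $0$ to $x$, and satisfies $(\phi_t\circ\psi_{y,w})_*(w_{y,w})=v$ with $|w_{y,w}|<\alpha$; hence $F_{(S,J_t)}(v)<\alpha$. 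This gives $\limsup_{(v,t)\to(v_0,0)}F_{(S,J_t)}(v)\le\alpha$, and letting $\alpha$ decrease to $F_{(S,J_0)}(v_0)$ yields upper semicontinuity at $(v_0,0)$; by the reduction, the proposition follows.

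The genuine analytic input — that complex structures near $J_0$ are realised by holomorphic maps $C^0$-close to the identity — is already packaged in the preceding proposition and may be assumed. The main obstacle is entirely one of \emph{uniformity}: a single $\eta>0$ must serve a whole neighbourhood of $(v_0,t_0)$ in $TS\times[0,1]$. This forces three things to be arranged compatibly: upgrading $C^0$ to $C^1$ convergence of the $\phi_t$ (automatic, by holomorphy and interior estimates); realising every nearby tangent vector by a holomorphic disc whose source norm is still below $\alpha$, continuously in the data (the affine modification, where $v_0\ne0$ enters); and solving $\phi_t(y)=x$, $(D\phi_t)_y w=v$ continuously and uniformly in $t$ (the inverse function theorem). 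None of these is hard in isolation, but keeping all the neighbourhoods correctly nested is the crux of the argument.
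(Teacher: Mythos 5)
Your overall strategy is the same as the paper's: take an almost extremal $J_0$-disc whose image is confined to a compact set, produce a near-identity injective holomorphic map from $(K^\circ,J_0)$ into $(S,J_t)$ for small $t$ (you cite the preceding proposition for this; the paper manufactures exactly these maps as local inverses of the Stein retraction composed with the slice maps $\zeta_t$), upgrade to $C^1$-closeness, and push the disc forward. You are in fact more careful than the paper about the \emph{joint} dependence on $(x,v,t)$ -- the paper's written proof only moves $t$ for a fixed $(p,v)$ -- and your nesting of neighbourhoods is correct.

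There is, however, one step that fails as written: the construction of the family $\psi_{x,v}$ by an ``affine modification of $\kappa\circ\psi_1$ in a chart,'' together with the claim that $\psi_1(\overline{\Delta})$ lies in a single chart domain. An almost extremal disc for the Kobayashi--Royden metric is close to (a restriction of) the universal covering map; its image is a large open set and in general is not contained in any coordinate chart, so $\kappa\circ\psi_1$ is not globally defined and the post-composed affine map makes no sense. The repair is routine and uses exactly the hypothesis $v_0\ne 0$ you already isolate: since $\psi_1$ is locally biholomorphic at $0$, precompose instead with automorphisms of $\Delta$ (or affine self-maps of the source close to the identity) to move the base point to any nearby $x\in\psi_1(\Delta)$, and rotate and rescale in the source to hit any nearby $v$; all images then stay inside the fixed compact set $\psi_1(\overline{\Delta})$ and the source norms stay below $\alpha$ by continuity. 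Note also that your choice of a compact $K\supset K_1$ with $K\ne S$ is only guaranteed when $S$ is non-compact (for compact hyperbolic $S$ the shrunk extremal disc can already cover $S$); this restriction is equally implicit in the paper's own argument, which needs $S_0$ to be Stein. With the precomposition fix, your proof is sound and matches the paper's method.
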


\begin{proof}
Let $v\in T_pS$, $p\in S$. Assume $F_{(S,J_0)}(v)=c\in\R$.
Fix $\epsilon>0$.
Then there is a holomorphic map
$f:(\Delta,0)\to (X,p)$ with 
$f_*\ddz=\lambda v$, $|\lambda|>c-\epsilon$.
Define $F(z)=f((1+\epsilon)z)$. Then
$F_*\ddz=(1+\epsilon)\lambda v$. By construction
the image $F(\Delta)$ is now contained in the compact set
\[
K_0=f\left(\left\{z:|z|\le\frac{1}{1+\epsilon}\right\}\right).
\]

The family of complex structures $(J_t)$ endow the product 
$S\times [0,1]$ with the structure of
a $CR$-manifold which is foliated by complex leaves, 
namely the $(S,t)$. Hence we have
a Levi-flat real three-dimensional $CR$-hypersurface. 
Such a $CR$-manifold can be embedded
into a complex manifold $Y$
(\cite{HLN}). Now $S_0=(S,0)$ is a closed complex Stein submanifold of $Y$.
Let $\Omega$ be a Stein open neighbourhood of $S_0$ in $Y$ (which exists due to \cite{Siu}).
After shrinking $\Omega$ if necessary there exists a holomorphic retraction $\rho:\Omega\to S_0$
(see e.g.~\cite{DLS}, lemma~2.1).

Let $K$ be a compact subset of $S$ containing $K_0$ in its interior.

For every $t\in[0,1]$  we consider $K_t=\{(x,t):x\in K\}\subset Y$.
There is a bound $r>0$ such that $K_t\subset\Omega$ for all
$t<r$ due to compactness of $K$ and openness of $\Omega$.

Let $\zeta_t$ denote the map
given by
\[
\zeta_t: x\mapsto (x,t)\in Y.
\]
Now $\rho\circ \zeta_t$ converges uniformly on $K$ to the identity map
in $C^1$-topology
for $t\to 0$. Using corollary~\ref{ex-inverse} we may deduce that
for $t$ sufficiently close to $0$ there is a map
$g_t:K_0\to K$ with $\rho\circ\zeta_t\circ g_t=id_{K_0}$.
Now we define (for sufficiently small $t$) a map 
$F_t:\Delta\to S_t$ via
\[
F_t=\zeta_t\circ g_t\circ F.
\]
We obtained a family of holomorphic maps $F_t:\Delta\to (S,J_t)$
with $\lim F_t=F$.
This yields the desired semicontinuity.
\end{proof}

\begin{proposition}\label{lambda-cont}
Let $X$ be an orientable surface and $J_t$ a smooth family of
complex structures which agree outside a compact subset $K\subset X$.
Let $\gamma\in\pi_1(S)$ and for each $t$ define $\lambda(t)$ as
the infimum of the hyperbolic length 
(with respect to the complex structure $J_t$) of closed simple curves
(freely) homotopic to $\gamma$.

Then $t\mapsto\lambda(t)$ is continuous.
\end{proposition}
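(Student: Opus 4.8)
The plan is to prove that $\lambda$ is continuous at each fixed $t_0\in[0,1]$ by sandwiching $\lambda(t)$ between $(1-\varepsilon)\lambda(t_0)$ and $(1+\varepsilon)\lambda(t_0)$ for $t$ close to $t_0$. First I would dispose of the trivial cases. If $(X,J_0)$ is not hyperbolic, then, as observed in the proof of Theorem~\ref{F-cont}, the property of being non-hyperbolic survives any deformation of the complex structure supported in a compact set, so every $(X,J_t)$ is non-hyperbolic, $F_{(X,J_t)}\equiv 0$, and $\lambda\equiv 0$. If the free homotopy class of $\gamma$ contains no simple closed curve, then $\lambda(t)=+\infty$ for every $t$. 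In either case $\lambda$ is constant. So from now on assume $(X,J_0)$, hence each $(X,J_t)$, is hyperbolic, and fix $t_0$.

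The crucial ingredient is a uniform two-sided comparison of the Kobayashi--Royden metrics: for every $\varepsilon>0$ there is a $\delta>0$ so that
\[
(1-\varepsilon)\,F_{(X,J_{t_0})}(v)\ \le\ F_{(X,J_t)}(v)\ \le\ (1+\varepsilon)\,F_{(X,J_{t_0})}(v)
\qquad(v\in TX,\ |t-t_0|<\delta).
\]
The right-hand inequality is Proposition~\ref{uniform-semi}, applied with $t_0$ in place of $0$ (its proof is symmetric in $t$ about the base parameter and applies to two-sided neighbourhoods). For the left-hand inequality I would rerun the Ahlfors--Schwarz argument from that proof: one picks, as there, hermitian metrics $g_t$ on $(X,J_t)$ whose Gaussian curvature is $\le -C_t$ with $C_t\to 1$ as $t\to t_0$ and which are comparable to the complete curvature-$(-1)$ metric of $(X,J_{t_0})$ with comparison constants tending to $1$; the Ahlfors--Schwarz lemma then bounds every holomorphic disc $\phi:\Delta\to(X,J_t)$ from below, which yields $F_{(X,J_t)}\ge c_t\,F_{(X,J_{t_0})}$ with $c_t\to 1$. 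It is precisely here that the hypothesis ``the $J_t$ agree outside the compact set $K$'' is used: it makes the curvature bound $-C_t$ and the comparison of $g_t$ with the fixed hyperbolic metric uniform over all of $X$.

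Granting this, the proposition follows by integration along curves. For any immersive differentiable simple closed curve $c:S^1\to X$ one has $L_{J_t}(c)=\int_0^1 F_{(X,J_t)}(c'(s))\,ds$, so the displayed estimate gives $(1-\varepsilon)L_{J_{t_0}}(c)\le L_{J_t}(c)\le (1+\varepsilon)L_{J_{t_0}}(c)$ whenever $|t-t_0|<\delta$, with the \emph{same} $\delta$ for every such $c$. Taking the infimum over all simple closed curves freely homotopic to $\gamma$ --- a set of curves that does not depend on $t$, since the topology of $X$ is fixed --- yields $(1-\varepsilon)\lambda(t_0)\le\lambda(t)\le(1+\varepsilon)\lambda(t_0)$ for $|t-t_0|<\delta$, and letting $\varepsilon\to 0$ gives continuity at $t_0$. (When $\lambda(t_0)=0$ the lower bound is vacuous while the upper bound forces $\lambda(t)\to 0$; when $\lambda(t_0)=+\infty$ both sides are infima over the empty set.)

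The step I expect to be the real obstacle is the lower bound in the comparison, i.e.\ \emph{uniform} lower semicontinuity of the Kobayashi--Royden metric. The pointwise lower semicontinuity of $F$ recorded in Theorem~\ref{F-cont} is not by itself enough to control the infimum defining $\lambda$, because a priori the curves nearly realizing $\lambda(t)$ might wander off to infinity in $X$ as $t\to t_0$; what rescues the argument is that Ahlfors--Schwarz produces a bound of $F_{(X,J_t)}$ from below by a \emph{fixed} complete metric, uniformly in $t$ near $t_0$, so that no discussion of compactness of minimizing curves is needed. Everything else --- the reduction to the hyperbolic case, the integration, and the passage to the infimum --- is routine.
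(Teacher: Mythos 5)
Your proposal is correct and follows essentially the same route as the paper: the heart of the matter is the uniform two\mbox{-}sided comparison of $F_{(X,J_t)}$ with $F_{(X,J_{t_0})}$ coming from the Ahlfors--Schwarz argument of Proposition~\ref{uniform-semi}, which one then integrates along curves and passes to the infimum over the ($t$-independent) free homotopy class; you correctly identify that the uniform lower bound is the essential point, since pointwise semicontinuity cannot control an infimum over non-compactly-many curves. The only cosmetic difference is that the paper obtains upper semicontinuity of $\lambda$ more cheaply, as an infimum of the continuous functions $t\mapsto L_{(X,J_t)}(\xi)$ over fixed representatives $\xi$ (via Theorem~\ref{F-cont}), rather than from the uniform upper bound on $F$.
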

\begin{proof}
Let $\xi$ be a fixed closed curve freely homotopic to $\gamma$.

By theorem~\ref{F-cont} we know that its hyperbolic length
\[
L_{(X,J_t)}(\xi)=\int F_{(X,J_t)}(\xi')ds
\]
is continuous in $t$.

The infimum of a family of continuous functions
is always upper-semicontinuous.

Thus it suffices to show that $\lambda(t)$ is lower-semicontinuous.
Due to proposition~\ref{uniform-semi}
we know that for every simple closed curve $\zeta$,
every $t_0$ and every $\varepsilon>0$
there exists a real number $\delta>0$ such that
\[
L_{t}(\zeta) > L_{t_0}(\zeta)/(1+ \varepsilon)
\]
for all $t$ with $|t-t_0|<\delta$.
This implies immediately
\[
\lambda(t) \ge \lambda(t_0)/(1+ \varepsilon)
\]
for all $t$ with $|t-t_0|<\delta$.
Hence the desired lower semicontinuity.
\end{proof}

\section{Proof of the main theorem}
Here we prove the Main Theorem.
\begin{proof}
We distinguish three cases:
\begin{enumerate}
\item
$X$ is not hyperbolic.
\item
$X$ is hyperbolic, but the hyperbolic length spectrum is trivial.
\item
$X$ is hyperbolic and the hyperbolic length spectrum is not trivial.
\end{enumerate}

Case $(1)$:

If $X$ is compact and non-hyperbolic, then $X$ is biholomorphic to
$\P_1$ or an elliptic curve and the result follows from the
classical theory of moduli spaces of compact Riemann surfaces.

If $X$ is non-compact and non-hyperbolic, then $X\simeq\C$
or $X\simeq\C^*$ und we refer to proposition~\ref{case-special}.

Case $(2)$:

In this case we know from corollary~\ref{arb-small}
that $X\simeq\Delta$, $X\simeq\Delta^*$ or
$X\simeq\P_1\setminus\{0,1,\infty\}$ and the statement follows
from proposition~\ref{case-special}.

Case $(3)$:

In this case $X$ is hyperbolic and there exists a simple
closed curve $\gamma$ and a constant $c>0$ such that every
simple closed curve homotopic to $\gamma$ has hyperbolic length
at least $c$.
Fix such a curve $\gamma$ and let $\lambda(\gamma)=c$ denote its
``stable hyperbolic length'', i.e., the infimum of the hyperbolic
length $L(\tilde\gamma)$ where the infimum is taken over all simple
closed curves homotopic to $\gamma$.

Lemma~\ref{shrink} implies that there is a compact subset $K\subset X$
and a smooth family of complex structures $J_t$ on $X$ such that all
these complex structures agree outside $K$, and such that the
hyperbolic length $L_{(X,J_1)}(\gamma)$ of $\gamma$ with respect
to the complex structure $J_1$ fulfills the inequality
$L_{(X,J_1)}(\gamma)<c$.
For each $t$ we define $\lambda(t)$ as
the infimum of the hyperbolic length 
(with respect to the complex structure $J_t$) of closed simple curves
(freely) homotopic to $\gamma$.
Now $t\mapsto \lambda (t)$ is continuous due to 
proposition~\ref{lambda-cont} and furthermore non-constant
by construction ($\lambda(0)=c>\lambda(1)$).
We recall the definition of $\Sigma_{(X,J_t)}$ as in \ref{spectrum}.
We observe that $\Sigma_t=\Sigma_{(X,J_t)}$ is a countable subset
of $\R$ for every $t$. Since $\lambda$ is continuous and non-constant,
there exists a parameter $s$ such that $\lambda(s)\not\in\Sigma_0$.
Then $\Sigma_0\ne\Sigma_1$ and therefore $(X,J_0)$ is not biholomorphic
to $(X,J_s)$.
\end{proof}

\section{Deformations of discrete subgroups of $PSL_2(\R)$}

Our result on deformations of complex structures on Riemann surfaces
can be translated into a result on deforming discrete subgroups
of $PSL_2(\R)$.

\begin{theorem}
Let $F$ be  free group (possibly not finitely generated)
and let $\rho_0:F\to G=PSL_2(\R)$ be a group
homomorphism which embedds $F$ into $G$ as a discrete subgroup.

Then there exists a continuous family of group homomorphisms 
$\rho_t:F\to G$ ($t\in[0,1]$) such that each $\rho_t$ embedds $F$
as a discrete subgroup in $G$ and such that $\rho_0$ is not
conjugate to $\rho_1$. (I.e.~there is no $g\in G$ such that
$\rho_1(\gamma)=g\cdot \rho_0(\gamma)\cdot g^{-1}$.)
\end{theorem}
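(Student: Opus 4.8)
The plan is to reduce this statement to the Main Theorem (Theorem~1) via the standard dictionary between discrete torsion-free subgroups of $PSL_2(\R)$ and hyperbolic Riemann surfaces. Given $\rho_0:F\to G$ embedding $F$ as a discrete subgroup $\Gamma_0=\rho_0(F)$, the group $\Gamma_0$ is free, hence torsion-free, so it acts freely and properly discontinuously on the upper half-plane $\mathbb{H}\cong\Delta$. First I would form the quotient Riemann surface $X_0=\mathbb{H}/\Gamma_0$; since $\Gamma_0$ is free and nontrivial (if $F$ is trivial there is nothing to prove, and if $F\cong\Z$ one handles $\Delta^*$ and $A(r,1)$ directly, noting that $A(r,1)$ does deform by varying $r$), the surface $X_0$ is hyperbolic with $\pi_1(X_0)\cong F$ and is not biholomorphic to $\P_1$. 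Moreover $X_0$ is not one of the exceptional surfaces $\Delta$, $\Delta^*$, $\P_1\setminus\{0,1,\infty\}$ unless $F$ has rank $\le 2$ — but even in those exceptional cases Proposition~\ref{case-special} still provides a nontrivial deformation, so the Main Theorem applies uniformly: there is a continuous family $J_t$ of complex structures on the underlying surface $S$ with $J_0$ the given one and $(S,J_1)\not\cong(S,J_0)$.

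Next I would convert the family $J_t$ back into a family of representations. Each $(S,J_t)$ is a hyperbolic Riemann surface, so by uniformization $(S,J_t)\cong\mathbb{H}/\Gamma_t$ for some discrete subgroup $\Gamma_t\le PSL_2(\R)$, and the identification of fundamental groups gives an isomorphism $F\cong\pi_1(S)\cong\Gamma_t$, i.e.\ a representation $\rho_t:F\to G$ embedding $F$ as a discrete subgroup. The key point is that one can choose the uniformizing data to depend continuously on $t$: this is where I would do the real work. Concretely, I would fix a base point and lift the smooth family $J_t$ to a smooth family of complex structures on the universal cover $\tilde S$, each making $\tilde S$ biholomorphic to $\mathbb{H}$; choosing developing maps $\mathrm{dev}_t:\tilde S\to\mathbb{H}$ normalized at the base point (these vary continuously by continuous dependence of the uniformizing map on the complex structure — e.g.\ via the measurable Riemann mapping theorem applied to the Beltrami coefficients $\mu_t$ of $J_t$ relative to $J_0$, which vary continuously and are supported in a fixed compact set), the deck transformations pushed through $\mathrm{dev}_t$ define $\rho_t(\gamma)=\mathrm{dev}_t\circ\gamma\circ\mathrm{dev}_t^{-1}\in PSL_2(\R)$, depending continuously on $t$.

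Finally I would check that $\rho_0$ is not conjugate to $\rho_1$. If there were $g\in G$ with $\rho_1(\gamma)=g\rho_0(\gamma)g^{-1}$ for all $\gamma\in F$, then conjugation by $g$ would carry $\Gamma_0$ to $\Gamma_1$ equivariantly, inducing a biholomorphism $\mathbb{H}/\Gamma_0\to\mathbb{H}/\Gamma_1$, i.e.\ $(S,J_0)\cong(S,J_1)$, contradicting the choice of $J_1$. Hence no such $g$ exists and $\rho_0$, $\rho_1$ are non-conjugate.

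I expect the main obstacle to be the continuity of $t\mapsto\rho_t$: one must argue carefully that the uniformizing maps $\mathrm{dev}_t$ (equivalently, the solutions of the Beltrami equation $\partial\tilde f_t/\partial\bar z=\mu_t\,\partial\tilde f_t/\partial z$) can be normalized so as to vary continuously in $t$, and that the resulting deck transformations land in $PSL_2(\R)$ rather than merely $PSL_2(\C)$ — the latter holds because each $J_t$ is a genuine complex structure on the real surface and $(S,J_t)$ is hyperbolic, so its universal cover is $\mathbb{H}$ and its automorphisms of the covering are real Möbius transformations. A minor additional point is disposing of the low-rank cases $F\cong\{e\}$ and $F\cong\Z$ by hand, since there $X_0$ may be $\Delta$ or an annulus and one simply transports the explicit deformations of Proposition~\ref{case-special} (and the annulus case, via $A(r,1)$ with $r$ varying) through uniformization.
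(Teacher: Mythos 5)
Your overall strategy is the same as the paper's: pass to $X=\mathbb{H}/\rho_0(F)$, deform the complex structure, uniformize the universal cover by maps normalized at a fixed point and direction, conjugate the deck transformations to get $\rho_t$, and deduce non-conjugacy of $\rho_0,\rho_1$ from the non-biholomorphy of $(X,J_0)$ and $(X,J_1)$. The one methodological difference is the continuity mechanism for the normalized uniformizing maps: you invoke the measurable Riemann mapping theorem for the Beltrami coefficients $\mu_t$, whereas the paper first establishes continuity of the Kobayashi pseudodistance (Theorem~\ref{F-cont} for compactly supported deformations, a separate lemma for the exceptional cases) and then gets continuity of $\phi_t$ and $\phi_t^{-1}$ from a normal-families argument. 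In the generic case, where the deformation comes from Lemma~\ref{shrink} and is supported in a fixed compact set, your Ahlfors--Bers argument is sound: the lifted Beltrami coefficients are invariant under the deck group and uniformly bounded away from $1$ in sup norm.

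The genuine gap is in the exceptional cases. You claim that ``even in those exceptional cases Proposition~\ref{case-special} still provides a nontrivial deformation, so the Main Theorem applies uniformly,'' and propose to transport those deformations through uniformization. This fails in two ways. First, the deformation of $\Delta^*$ supplied by Proposition~\ref{case-special} deforms $\Delta^*$ to $\C^*$, which is not hyperbolic: for $t\neq 0$ there is no discrete subgroup of $PSL_2(\R)$ uniformizing $(X,J_t)$ at all, so no $\rho_t$ in $G$ can be extracted. Second, even for $\P_1\setminus\{0,1,\infty\}$, which Proposition~\ref{case-special} deforms to the hyperbolic surface $\Delta\setminus\{0,\tfrac12\}$, the radial stretching maps used there are not quasiconformal (their dilatation is unbounded, and indeed no quasiconformal map between these two surfaces can exist, since it would have to carry an end of infinite modulus to one of finite modulus); so the measurable Riemann mapping theorem gives you nothing, and the remark following Theorem~\ref{F-cont} shows that for precisely this type of family the Kobayashi pseudometric is not even continuous in $t$. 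The paper closes this gap by replacing the Proposition~\ref{case-special} deformations, in the cases $\Delta^*$ and $\P_1\setminus\{0,1,\infty\}$, with the shrinking-domain families $X_t=S\setminus\{z:|z|\le t\}$ for $S=\Delta$ resp.\ $S=\C^*\setminus\{2\}$, and proving a dedicated lemma that the Kobayashi pseudodistance varies continuously for such families. Your passing suggestion of deforming $A(0,1)$ to $A(r,1)$ is the right idea for $F\cong\Z$, but it must be upgraded to an argument producing a continuous path of generators in $PSL_2(\R)$, and the case $X\cong\P_1\setminus\{0,1,\infty\}$ needs an analogous explicit treatment rather than an appeal to the Main Theorem.
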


\begin{lemma}
Let $S$ be a hyperbolic Riemann surface, $I=[0,1]$, $M=S\times I$.

Let $A$ be a closed subset of $M$. Assume that $A$ equals the closure
of its interior.

For $t\in I$, let $X_t=\{p\in S:(p,t)\not\in A\}$.

Then the Kobayashi-pseudodistance on $X_t$ is continuous in $t$.
\end{lemma}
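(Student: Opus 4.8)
The plan is to reduce this statement to Theorem~\ref{F-cont} (continuity of the Kobayashi--Royden pseudometric under a compactly supported deformation) applied not to a single global complex structure on $S$, but to the varying domains $X_t$. The subtlety is that here the complex structure on $S$ is fixed; what varies with $t$ is the open subset $X_t\subset S$ obtained by deleting the slice $A_t=\{p:(p,t)\in A\}$. So the continuity we must establish is continuity of $d_{X_t}$ in $t$, where the $X_t$ shrink and grow. Since $A$ equals the closure of its interior, the family $t\mapsto X_t$ is, in a suitable sense, lower semicontinuous: a point $p\in X_{t_0}$ (i.e. $(p,t_0)\notin A$) has a whole neighbourhood $U\times(t_0-\eta,t_0+\eta)$ disjoint from $A$ once $(p,t_0)$ is in the \emph{interior} of the complement, and the set of such "interior" points is dense in $X_{t_0}$; the hypothesis on $A$ is precisely what rules out the pathological case where $X_t$ could jump up in size at an isolated parameter.

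First I would fix $p\in S$, a tangent vector $v\in T_pS$, and a parameter $t_0$ with $p\in X_{t_0}$, and prove that $t\mapsto (F_{X_t})_p(v)$ is continuous at $t_0$; the passage from $F$ to $d$ is then the standard integration argument already used in the proof of Proposition~\ref{lambda-cont}. \emph{Upper semicontinuity at $t_0$:} choose a holomorphic $\phi:(\Delta,0)\to(X_{t_0},p)$ with $\phi_*(\lambda\partial/\partial z)=v$ and $|\lambda|$ within $\epsilon$ of $(F_{X_{t_0}})_p(v)^{-1}$. After rescaling the disc slightly, $\phi(\Delta)$ is contained in a compact $K\subset X_{t_0}$, i.e. $K\times\{t_0\}$ is disjoint from $A$; since $A$ is closed and $K$ compact, $K\times[t_0-\eta,t_0+\eta]$ is disjoint from $A$ for small $\eta$, so the same $\phi$ shows $(F_{X_t})_p(v)\le(1+O(\epsilon))(F_{X_{t_0}})_p(v)$ for $|t-t_0|<\eta$. \emph{Lower semicontinuity at $t_0$:} here I would invoke a normal-families argument exactly as in the proof of Proposition~\ref{uniform-semi}: the relevant disc maps $\phi_n:\Delta\to X_{t_n}\subset S$ take values in the fixed hyperbolic surface $S$, so by Ahlfors--Schwarz they are uniformly bounded in derivative hence form a normal family; a convergent subsequence has limit $\phi:\Delta\to S$, and one must check $\phi(\Delta)\subset X_{t_0}$, i.e. that the limit does not slip into $A_{t_0}$. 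This is where the hypothesis $\overline{\mathrm{int}(A)}=A$ is used again, in its contrapositive form: $S\setminus X_{t_0}=A_{t_0}$ has empty interior relative to the "escaping" directions, so a non-constant limit disc cannot be trapped in $A_{t_0}$.

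The main obstacle I anticipate is exactly this last point --- controlling the limit of the extremal discs near the boundary where $X_t$ is collapsing as $t\to t_0$. The worry is a disc map $\phi_n$ for $X_{t_n}$ whose image hugs a part of $A_{t_0}$ that is present at $t_0$ but absent at nearby $t_n$; if the limit disc $\phi$ has image inside $A_{t_0}$ then it is not admissible for $X_{t_0}$ and lower semicontinuity fails. The resolution should be: if $\phi$ is non-constant its image is open in $S$, hence meets the interior of $X_{t_0}$ unless $X_{t_0}$ itself has empty interior there --- but $X_{t_0}$ is open, and the only way $\phi(\Delta)\subset A_{t_0}$ with $\phi$ non-constant is for $A_{t_0}$ to contain a non-empty open set, which is allowed; the point is rather that $\phi(\Delta)$ cannot be contained in $\overline{A_{t_0}}\setminus X_{t_0}$ \emph{and} pass through $p\in X_{t_0}$, so $\phi(0)=p$ forces $\phi(\Delta)$ to enter $X_{t_0}$, and then the identity-theorem plus openness of $X_{t_0}$ under the shrinking family pushes the whole disc (after a harmless rescaling) into $X_{t_0}$ for the tail of the sequence. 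Once that is pinned down, the estimate $(F_{X_{t_0}})_p(v)\le\liminf(F_{X_{t_n}})_p(v_n)$ drops out and the proof concludes as in Proposition~\ref{lambda-cont}.
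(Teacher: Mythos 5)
Your overall strategy coincides with the paper's: for upper semicontinuity you take a near-extremal disc $\psi:(\Delta,0)\to(X_{t_0},p)$, precompose with $z\mapsto(1-\epsilon)z$ so that the image lies in a compact subset $K$ of $X_{t_0}$, and use that $M\setminus A$ is open to see that the very same disc is admissible for $X_t$ with $t$ near $t_0$; for lower semicontinuity you extract, via hyperbolicity of the ambient fixed surface $S$, a limit $\phi:(\Delta,0)\to(S,p)$ of admissible discs $\phi_n:(\Delta,0)\to(X_{t_n},p_n)$ and must check that $\phi$ is admissible for $X_{t_0}$. The first half is fine and is exactly the paper's argument.

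The gap is in your resolution of the crux of the second half. What has to be excluded is that $\phi(\Delta)$ meets $A_{t_0}$ \emph{at all}; you instead discuss whether $\phi(\Delta)$ could be \emph{contained} in $A_{t_0}$, which is beside the point (it is trivially impossible since $\phi(0)=p\in X_{t_0}$), and your proposed mechanism --- ``the identity theorem plus openness of $X_{t_0}$ pushes the whole disc into $X_{t_0}$'' --- is not a valid argument: a non-constant holomorphic disc through $p$ can perfectly well have image meeting both $X_{t_0}$ and its complement, and neither the identity theorem nor a rescaling (which is the tool for the \emph{upper} bound, not here) rules that out. The paper's argument, and the place where the hypothesis $A=\overline{\mathrm{int}(A)}$ actually does its work, is this: by the open mapping theorem $\phi(\Delta)$ is open in $S$; the set $\phi(\Delta)\times\{t_0\}$ cannot meet $\mathrm{int}(A)$, since a basic neighbourhood $U\times J\subset A$ of such a point would contain $(\phi_n(z),t_n)$ for large $n$, contradicting $\phi_n(\Delta)\subset X_{t_n}$; and then, because every point of $A$ is a limit of points of $\mathrm{int}(A)$ and $\phi(\Delta)$ is open, an intersection point of $\phi(\Delta)\times\{t_0\}$ with $A$ would force such a contradiction as well. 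You need to supply this step; as written, your lower semicontinuity does not go through.
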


\begin{proof}
Let $t\in I$ and $p\in X_t$ and let $(p_n,t_n)\in M\setminus A$ with
$\lim (p_n,t_n)=(p,t)$.
Let $\phi_n:(\Delta,0)\to (X_{t_n},p_n)$ be a sequence of holomorphic maps.
Since $S$ is hyperbolic, there is a subsequence converging to a 
holomorphic map $\phi:(\Delta,0)\to (S,p)$.
By the open mapping theorem the image $\phi(\Delta)$ is open in $S$.
On the other hand $\phi(\Delta)\times\{t\}$ can not intersect the interior
of $A$, since $\phi=\lim\phi_n$. In combination with our assumption on $A$
it follows that $\phi(\Delta)\subset X_t$.

Conversely let $\psi:(\Delta,t)\to (X_t,p)$ be a holomorphic map.
For every $\epsilon>0$ the set $\psi(\bar\Delta_{1-\epsilon}$ is compact.
Since $M\setminus A$ is open, it follows that for every $\epsilon>0$
there exists a $\delta>0$ such that the map
$\psi_\epsilon:\Delta\to S$ defined by
\[
z\mapsto \psi\left( (1-\epsilon)z\right)
\]
has its image contained in $X_s$ for all $s$ with $|s-t|<\delta$.

These two considerations imply that the Kobayashi pseudodistance is
both upper- and lower semicontinuous, hence continuous in $t$.
\end{proof} 

\begin{corollary}
Let $X$ be biholomorphic to $\Delta^*$ or $\P_1\setminus\{0,1,\infty\}$.

Then there exists a deformation of the complex structure such that the Kobayashi
pseudodistance varies continuously.
\end{corollary}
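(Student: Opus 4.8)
The plan is to reduce this corollary to the lemma immediately above it by exhibiting, for each of the two Riemann surfaces in question, an explicit family of complex structures that arises as the ``slicing'' of a suitable closed set $A \subset S \times I$ with $A = \overline{A^\circ}$. First I would handle $\Delta^*$. Recall from the section on the special cases of $\C$ and $\Delta$ that we produced, via the maps $\phi_t$ and $\phi^*_t$, a smooth family of complex structures $J_t$ on the unit disc with $(\Delta, J_t)$ biholomorphic to $\C$ for $t \neq 0$ and biholomorphic to $\Delta$ for $t=0$ (and vice versa for $J^*_t$); restricting to $\Delta \setminus \{0\}$ gives a corresponding family on $\Delta^*$. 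However, that family is precisely the kind for which the Kobayashi pseudodistance jumps discontinuously (it vanishes identically for the parabolic member $\C$ and is nondegenerate for $\Delta^*$), so it is \emph{not} the deformation we want here. Instead, for $\Delta^*$ I would use the annuli: set $S = \Delta$, realize $\Delta^* = A(0,1)$, and take the family $X_t = A(r(t), 1)$ with $r: I \to [0,1)$ a continuous strictly increasing function with $r(0) = 0$. Each $X_t$ is a genuine (hyperbolic) Riemann surface, $X_0 = \Delta^*$, and $(X_1, J_1) = A(r(1),1)$ is not biholomorphic to $\Delta^*$ since the modulus is a biholomorphism invariant of annuli. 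To fit the lemma's hypotheses, I would take $A = \{ (z,t) \in \Delta \times I : |z| \le r(t) \}$; this is closed, and since $r$ is continuous and strictly increasing $A$ coincides with the closure of its interior $\{|z| < r(t), t \in (0,1]\}$ (the only boundary piece not in that closure is the slice $t=0$, which \emph{is} in the closure because $r$ is continuous from the right at $0$ — here one uses strict monotonicity to guarantee $A^\circ \ne \emptyset$ for $t>0$ and continuity to reach $t=0$). The lemma then gives continuity of the Kobayashi pseudodistance in $t$, which is the assertion.

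For $\P_1 \setminus \{0,1,\infty\}$ I would do the analogous thing, now deforming one of the three punctures into a small closed disc. Write $S = \P_1 \setminus \{0,1\}$ and let $D_t$ be a closed disc around $\infty$ of radius shrinking to the point $\{\infty\}$ as $t \to 0$, say $D_t = \{|z| \ge 1/r(t)\} \cup \{\infty\}$ with $r(0) = 0$ (meaning $D_0 = \{\infty\}$) and $r$ continuous strictly increasing. Put $X_t = S \setminus D_t$, so $X_0 = \P_1 \setminus \{0,1,\infty\}$ and for $t > 0$, $X_t$ is $\P_1$ minus two points and a closed disc, which is a hyperbolic Riemann surface of finite type that is \emph{not} biholomorphic to $\P_1 \setminus \{0,1,\infty\}$ (its standard compactification has a boundary disc, not three punctures; alternatively invoke the moduli count from the ``Classical Theory'' section). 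Set $A = \{ (z,t) : z \in D_t \} \subset S \times I$; as before this is closed and equals the closure of its interior. Apply the lemma to conclude that the Kobayashi pseudodistance on $X_t$ varies continuously. In both cases it remains to observe that such a family of domains $X_t$ inside a fixed hyperbolic $S$ induces, by pullback along smoothly varying diffeomorphisms identifying the underlying surfaces, a deformation of the complex structure on the fixed Riemann surface $X = X_0$ in the sense used in the rest of the paper — this is routine since all the $X_t$ are diffeomorphic (an annulus, respectively a three-holed sphere) and the diffeomorphisms can be chosen to depend smoothly on $t$.

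The one point that requires genuine care — the main obstacle — is the verification that the slicing set $A$ satisfies the hypothesis $A = \overline{A^\circ}$ \emph{including at the endpoint} $t = 0$, where the fibre degenerates (to a point, or to the boundary circle of $\Delta$). If $r$ were merely continuous and not strictly increasing, $A$ could fail this condition on an interval; and even with strict monotonicity one must check that no ``extra'' low-dimensional piece of $A$ sits off the closure of the open part. This is elementary but is exactly the place where the argument could silently go wrong, so I would state the chosen $r$ explicitly (e.g.\ $r(t) = t$, after rescaling the target annulus/disc appropriately) and verify the closure condition fibrewise. Everything else — hyperbolicity of each $X_t$, the non-biholomorphism of $X_1$ with $X_0$, and the smooth dependence of the identifying diffeomorphisms — is standard and already implicitly available from the earlier sections.
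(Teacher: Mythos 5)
Your construction is essentially the paper's own proof: the paper takes $S=\Delta$ resp.\ $S=\C^*\setminus\{2\}\simeq\P_1\setminus\{0,1,\infty\}$ and $A=\{(z,t):|z|\le t\}$, which is exactly your $\Delta^*$ case with $r(t)=t$, and your attention to the condition $A=\overline{A^\circ}$ at $t=0$ is warranted and correctly handled. There is, however, one genuine slip in your second case: you set $S=\P_1\setminus\{0,1\}$, but this surface is isomorphic to $\C^*$ and hence \emph{not hyperbolic}, whereas the lemma you are invoking explicitly requires $S$ to be a hyperbolic Riemann surface (its proof extracts a convergent subsequence from holomorphic maps $\Delta\to S$ via normality, which fails for $\C^*$). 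The fix is immediate and is what the paper does: since your discs $D_t$ all contain $\infty$, every $X_t$ already lies in $\P_1\setminus\{0,1,\infty\}$, so take $S=\P_1\setminus\{0,1,\infty\}$ itself and let $A$ be the trace of the growing (punctured) disc around the deleted point; then $A$ is closed in $S\times I$, equals the closure of its interior, has empty fibre over $t=0$, and the lemma applies verbatim. With that correction your argument coincides with the paper's.
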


\begin{proof}
Note that $\P_1\setminus\{0,1,\infty\}\simeq\C^*\setminus\{2\}$.
We use the lemma with 
$S=\Delta$ resp.~$S=\C^*\setminus\{2\}$ and
$A=\{(z,t):|z|\le t\}$.
\end{proof}

\begin{corollary}
Let $X$ be a hyperbolic Riemann surface which is not simply-connected.

Then there exists a non-trivial deformation family of complex structures
on $X$ for which the Kobayashi pseudodistance varies continuously.
\end{corollary}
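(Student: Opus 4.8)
The plan is to treat separately the case where the hyperbolic length spectrum $\Sigma_X$ of $X$ is trivial and the case where it is not. Note first that, $X$ being hyperbolic, its Kobayashi pseudodistance is a genuine distance function.

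Suppose $\Sigma_X=\{0\}$, i.e.\ every simple curve in $X$ can be deformed to curves of arbitrarily small hyperbolic length. By Corollary~\ref{arb-small} the only possibilities are then $X\simeq\Delta$, $X\simeq\Delta^*$, and $X\simeq\P_1\setminus\{0,1,\infty\}$, and the first is ruled out because $X$ is not simply connected. For the other two, the corollary just above (for $X$ biholomorphic to $\Delta^*$ or $\P_1\setminus\{0,1,\infty\}$) already provides a deformation of the complex structure along which the Kobayashi pseudodistance varies continuously; I would only add the remark that this deformation is non-trivial: in the model used there the deformed surfaces are the finite annuli $A(r,1)$ in the first case (not biholomorphic to $\Delta^*=A(0,1)$, the modulus having changed) and twice-punctured discs in the second (not biholomorphic to the thrice-punctured sphere). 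So in this case we are done.

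Suppose now $\Sigma_X\ne\{0\}$. Here I would reuse the construction from Case~(3) of the proof of the Main Theorem: choose a simple closed curve $\gamma$ with stable hyperbolic length $\Lambda(\gamma)=c>0$, and apply Lemma~\ref{shrink} to obtain a smooth family of complex structures $(J_t)_{t\in[0,1]}$ with $J_0$ the given structure, all $J_t$ agreeing outside one fixed compact subset $K\subset X$, and $L_{(X,J_1)}(\gamma)<c$. Writing $\lambda(t)$ for the stable hyperbolic length of $\gamma$ with respect to $J_t$, Proposition~\ref{lambda-cont} gives that $\lambda$ is continuous, and it is non-constant since $\lambda(0)=c>\lambda(1)$. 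As each length spectrum $\Sigma_{(X,J_t)}$ is countable, there is a parameter $s$ with $\lambda(s)\notin\Sigma_{(X,J_0)}$, whence $\Sigma_{(X,J_s)}\ne\Sigma_{(X,J_0)}$ and $(X,J_s)$ is not biholomorphic to $(X,J_0)$; so the family is non-trivial.

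It remains to verify that along this last family the Kobayashi pseudodistance $d_{(X,J_t)}$ depends continuously on $t$, and this I regard as the only substantial point. The decisive feature is that the $J_t$ agree outside the relatively compact set $K$: this is precisely the hypothesis under which Theorem~\ref{F-cont} yields joint continuity of the Kobayashi--Royden pseudometric and under which Proposition~\ref{uniform-semi} supplies the uniform multiplicative comparison $F_{(X,J_t)}\le(1+\varepsilon)F_{(X,J_{t_0})}$ for $t$ near $t_0$. Given these, one argues exactly as in the proof of Proposition~\ref{lambda-cont}, but with the pseudodistance $d_{(X,J_t)}(p,q)$ — itself an infimum, over paths from $p$ to $q$, of hyperbolic lengths — in place of $\lambda(t)$: the length of any fixed path is continuous in $t$, so $d_{(X,J_t)}(p,q)$ is upper semicontinuous as an infimum of continuous functions, while the uniform estimate forces $d_{(X,J_t)}(p,q)\ge d_{(X,J_{t_0})}(p,q)/(1+\varepsilon)$ and hence lower semicontinuity. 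The delicate ingredient is the \emph{uniformity} of this comparison over all tangent vectors, available only because the deformation is localized in a compact set; as the remark after Theorem~\ref{F-cont} shows, without such localization the pseudodistance can genuinely jump, which is also why the two cases above require quite different constructions.
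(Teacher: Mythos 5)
Your proof is correct and follows essentially the same route as the paper, which dispatches the cases $X\simeq\Delta^*$ and $X\simeq\P_1\setminus\{0,1,\infty\}$ via the preceding corollary and handles all other (non-simply-connected hyperbolic) surfaces by the compactly supported deformation of Lemma~\ref{shrink} combined with the continuity statement of Theorem~\ref{F-cont}. You merely make explicit two points the paper leaves implicit, namely the non-triviality of the deformations in the two special cases and the passage from continuity of the infinitesimal pseudometric to continuity of the integrated pseudodistance.
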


\begin{proof}
This follows from the preceding corollary if $X$ is isomorphic to
$\Delta^*$ or $\P_1\setminus\{0,1,\infty\}$ and from X
in all other cases.
\end{proof}

Now we can prove the theorem.

\begin{proof}
There is a non-trivial family of complex structures $J_t$ on $X$ such that
the Kobayashi-pseudodistance varies continuously. Let $\tilde X$ denote
the universal covering. We obtain a family of complex structures $\tilde J_t$
such that the Kobayashi pseudodistance varies continuously and the
fundamental group $\pi_1(X)$ acts on $\tilde X$ by ``deck transformations''
which are holomorphic with respect to each $\tilde J_t$.
We fix a point $p\in\tilde X$ and a real tangent direction $\xi$, i.e., a
real half-line in the real tangent space (or equivalently, a tangent
vector of length $1$).
Since $\tilde X$ is hyperbolic and simply-connected, for each $t$ there
is a unique biholomorphic map $\phi_t$ from $(\tilde X,\tilde J_t)$ 
to the unit disc
mapping $p$ to $0$ and $\xi$ to the positive real half line.
The continuity of the Kobayashi pseudodistance implies that these maps
$\phi_t$ as well as there inverse maps $\phi_t^{-1}$ form normal families.
Therefore both $\phi_t$ and $\phi_t^{-1}$ are continuous in $t$.
For each $\gamma\in\pi_1(X)$ let $\alpha_\gamma$ denote the 
deck transformation acting on $\tilde X$. Then 
\[
\pi_1(X)\times I\ni(\gamma,t)\mapsto \phi_t  \circ \alpha_\gamma
\circ \phi_t^{-1} \ \in \Aut(\Delta)\simeq PSL_2(\R)
\]
defines a continuous family of group homomorphisms.
\end{proof}

For arbitrary Lie groups there is a weaker result, cf. \cite{W}.

\begin{proposition}
Let $G$ be a real Lie group which contains a non-compact semisimple
Lie subgroup. Let $k\in\N$. Then there exists a
non-trivial family of injective group homomorphism
$\rho_t$ $(t\in I)$ from the free group $F_k$ with $k$ generators
into $G$ such that $\rho_t(F_k)$ is discrete for every $t$.
\end{proposition}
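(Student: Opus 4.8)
The plan is to reduce the statement to the rank-one case already treated in this paper, namely the deformation theory for $PSL_2(\R)$. The key observation is that a real Lie group $G$ containing a non-compact semisimple subgroup $H$ contains, by the classical structure theory of semisimple Lie groups, a closed subgroup locally isomorphic to $PSL_2(\R)$; indeed, a non-compact semisimple $H$ has a non-compact simple factor, which in turn contains a copy of $SL_2(\R)$ arising from an $\mathfrak{sl}_2$-triple associated to any root (the Jacobson--Morozov construction). Composing the covering $SL_2(\R)\to PSL_2(\R)$ with this inclusion, we obtain a Lie group homomorphism $\iota:PSL_2(\R)\to G$ with discrete (in fact finite) kernel; restricting to a suitable subgroup we may assume $\iota$ is injective, and in any case $\iota$ is a proper immersion onto a closed subgroup, so it maps discrete subgroups of $PSL_2(\R)$ to discrete subgroups of $G$.

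Next I would produce the model deformation inside $PSL_2(\R)$. Choose a hyperbolic Riemann surface $X$ with $\pi_1(X)$ free of rank $k$ and with non-trivial length spectrum --- for instance $\Delta$ minus $k$ points, or more simply any such $X$ to which Case $(3)$ of the main theorem applies; then fix a holonomy homomorphism $\sigma_0:F_k\to PSL_2(\R)$ realizing $X$ as $\Delta/\sigma_0(F_k)$. By the Main Theorem (Case $(3)$), or directly by Theorem~2 of this paper, there is a continuous family of complex structures $J_t$ on $X$ agreeing outside a compact set, with non-constant length spectrum, and by the argument in the section on deformations of discrete subgroups this yields a continuous family $\sigma_t:F_k\to PSL_2(\R)$ of discrete, faithful representations with $\sigma_0$ not conjugate to $\sigma_1$ (the conjugacy invariant being the length spectrum, equivalently the traces of the $\sigma_t(\gamma)$).

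Finally, set $\rho_t=\iota\circ\sigma_t:F_k\to G$. Each $\rho_t$ is injective (composition of injections), and $\rho_t(F_k)=\iota(\sigma_t(F_k))$ is discrete in $G$ because $\iota$ is a closed immersion and $\sigma_t(F_k)$ is discrete in $PSL_2(\R)$. Continuity in $t$ is immediate. It remains to check that the family is non-trivial, i.e.\ $\rho_0$ is not conjugate to $\rho_1$ in $G$. The main obstacle is precisely this step: conjugacy in the larger group $G$ is a weaker relation than conjugacy in $\iota(PSL_2(\R))$, so one must argue that no element of $G$ can carry $\iota\circ\sigma_0$ to $\iota\circ\sigma_1$. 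For this I would use an invariant that survives passage to $G$: the conjugacy class in $G$ of a single element $\iota(\sigma_t(\gamma))$ for a fixed nontrivial $\gamma\in F_k$ varies nontrivially with $t$ (its eigenvalues, or the restriction of $\mathrm{Ad}$, change because the translation length of $\sigma_t(\gamma)$ changes), and choosing the family $J_t$ so that the length of one specific geodesic varies strictly --- which Lemma~\ref{shrink} together with Proposition~\ref{lambda-cont} allows --- forces $\rho_0\not\sim\rho_1$. Thus the genuinely new content is the embedding $\iota$ and the verification that a one-parameter change of translation length of a hyperbolic element of $PSL_2(\R)$ cannot be undone by conjugation in $G$; everything else is transport of the results already established.
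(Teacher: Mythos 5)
Your route is genuinely different from the paper's. The paper disposes of this proposition in one line by citing \cite{W}: there is a non-empty \emph{open} set $W\subset G^k$ of $k$-tuples generating free discrete subgroups, and one obtains the family by drawing a suitable path inside $W$. You instead reduce to $PSL_2(\R)$ and transport the Riemann-surface deformation constructed earlier in the paper. That is a legitimate and more constructive strategy, but as written it has two gaps.

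First, the construction of $\iota$ does not parse. Jacobson--Morozov gives an $\mathfrak{sl}_2$-triple in $\mathfrak g$, hence an analytic subgroup $H\le G$ with Lie algebra $\mathfrak{sl}_2(\R)$; but $H$ is a quotient of the universal cover $\widetilde{SL_2(\R)}$ and may well be a \emph{proper covering} of $PSL_2(\R)$ with large (even infinite) center --- take $G=\widetilde{SL_2(\R)}$ itself. In that case the covering homomorphism goes from $H$ \emph{onto} $PSL_2(\R)$, and there is no map $PSL_2(\R)\to G$ to compose with; ``composing the covering $SL_2(\R)\to PSL_2(\R)$ with the inclusion'' produces nothing unless $-I$ dies in $G$. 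The repair is to go the other way: since $F_k$ is free, lift $\sigma_t:F_k\to PSL_2(\R)$ through the covering $H\to PSL_2(\R)$ by lifting the images of the $k$ generators (continuously in $t$, as the covering is a local homeomorphism). The lift is injective because its projection is, and its image is discrete because it sits inside the preimage of a discrete subgroup under a homomorphism with discrete kernel. You must also quote the theorem that a semisimple analytic subgroup of a Lie group is closed (Got\^o/Yosida) to pass from discreteness in $H$ to discreteness in $G$.

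Second, the non-conjugacy step, which you rightly identify as the crux, is left as a plausibility argument. It can be closed as follows: for a fixed nontrivial $\gamma\in F_k$, the characteristic polynomial of $\mathrm{Ad}_G(\rho_t(\gamma))$ on $\mathfrak g$ is a conjugation invariant in $G$, and $e^{\ell_t}$ is always among its roots, where $\ell_t$ is the translation length of $\sigma_t(\gamma)$ (the ``stable hyperbolic length'' of the corresponding free homotopy class). If $\rho_t$ were conjugate to $\rho_0$ for all $t$, this polynomial would be independent of $t$ and would therefore have only finitely many roots, while $\{e^{\ell_t}\}$ sweeps out a nondegenerate interval by the shrinking lemma and the continuity of $t\mapsto\lambda(t)$. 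This contradiction yields $s,t$ with $\rho_s$ not conjugate to $\rho_t$ in $G$. With these two repairs your argument is complete; note also that for $k=1$ you should take an annulus $A(r,1)$ rather than $\Delta^*$, whose length spectrum is trivial.
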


\begin{proof}
In \cite{W} we proved that there exists a 
non-empty open subset
$W\subset G^k$ such that for every $(g_1,\ldots,g_k)\in W$ the subgroup
of $G$ generated by $g_1,\ldots,g_k$ is free and discrete.
\end{proof}


\begin{thebibliography}{HLN08}

\bibitem[DLS94]{DLS}
Jean-Pierre Demailly, L{\'a}szl{\'o} Lempert, and Bernard Shiffman,
  \emph{Algebraic approximations of holomorphic maps from {S}tein domains to
  projective manifolds}, Duke Math. J. \textbf{76} (1994), no.~2, 333--363.
  \MR{1302317 (95i:32022)}

\bibitem[HLN08]{HLN}
C.~Denson Hill, Jerzy Lewandowski, and Pawe{\l} Nurowski, \emph{Einstein's
  equations and the embedding of 3-dimensional {CR} manifolds}, Indiana Univ.
  Math. J. \textbf{57} (2008), no.~7, 3131--3176. \MR{2492229 (2010e:32035)}

\bibitem[Ric63]{R}
Ian Richards, \emph{On the classification of noncompact surfaces}, Trans. Amer.
  Math. Soc. \textbf{106} (1963), 259--269. \MR{0143186 (26 \#746)}

\bibitem[Siu77]{Siu}
Yum~Tong Siu, \emph{Every {S}tein subvariety admits a {S}tein neighborhood},
  Invent. Math. \textbf{38} (1976/77), no.~1, 89--100. \MR{0435447 (55 \#8407)}

\bibitem[Win02]{W}
J{\"o}rg Winkelmann, \emph{Generic subgroups of {L}ie groups}, Topology
  \textbf{41} (2002), no.~1, 163--181. \MR{1871245 (2002i:22016)}

\end{thebibliography}
\providecommand{\bysame}{\leavevmode\hbox to3em{\hrulefill}\thinspace}
\providecommand{\MR}{\relax\ifhmode\unskip\space\fi MR }
\providecommand{\MRhref}[2]{%
  \href{http://www.ams.org/mathscinet-getitem?mr=#1}{#2}
}
\providecommand{\href}[2]{#2}

\end{document}